\documentclass{amsart}
\usepackage{amsmath,amssymb,amsthm,amscd,amsfonts}
\usepackage{graphicx}%
\usepackage{enumerate}
\usepackage{tikz-cd}
\usepackage[colorlinks=true,urlcolor=red,citecolor=black,linkcolor=blue]{hyperref} 
\usepackage{color}
\usepackage[left=1.5in,right=1.5in,bottom=1.5in,top=1.5in]{geometry}
\usepackage{stackrel}
\usepackage{quiver}
\newenvironment{nouppercase}{%
  \renewcommand{\uppercasenonmath}[1]{}}{}

\theoremstyle{plain} \numberwithin{equation}{section}
\newtheorem{thm}{Theorem}[section]
\newtheorem{cor}[thm]{Corollary}

\newtheorem{lem}[thm]{Lemma}

\newtheorem*{clm*}{Claim}

\newtheorem{propp}{Proposition}

\newtheorem{thmm}[propp]{Theorem}
\newtheorem{corr}[propp]{Corollary}
\newtheorem{conjj}[propp]{Conjecture}

\theoremstyle{definition}
\newtheorem{defn}[thm]{Definition}

\newtheorem{rmk}[thm]{Remark}

\newcommand{\diam}{\operatorname{diam}}

\newcommand{\im}{\operatorname{im}}

\newcommand{\calS}{\mathcal S}
\newcommand{\bfs}{\mathbf c}
\newcommand{\bft}{\mathbf d}

\newcommand{\Z}{\mathbb Z}
\newcommand{\N}{\mathbb N}

\renewcommand{\phi}{\varphi}
\newcommand{\mesh}{\operatorname{mesh}}

\newcommand{\U}{\mathcal U}
\newcommand{\V}{\mathcal V}

\newcommand{\bfsi}{{}_ib_\bfs}

\begin{document}

\date{}
\title{A canonical splitting of the first homology group of Peano continua}
\author{Gregory R. Conner}
\author{Wolfgang Herfort}
\author{Curtis Kent}
\author{Petar Pave\v{s}i\'{c} }

\subjclass[2020]{55N10, 55R65, 54F15}

\begin{abstract}
    The first singular homology of a Peano continuum $X$ with torsion-free first \v Cech homology, $\check H_1(x)$, splits as $H_1(X) = \check H_1(X) \oplus K$ where $K$ is the homology shape kernel of $X$.  Consequently if a Peano continuum $X$ is a subspace of $\mathbb R^3$, then $H_1(X) = \mathbb Z^\lambda \oplus K$ where $K$ is the homology shape kernel of $X$ and $\lambda$ is a countable cardinal. In the process we construct cotorsion quotients of subgroups of the first homology which correspond to path-connected fibrations of $X$.

\end{abstract}

\vspace*{-2cm}
\begin{nouppercase}
\maketitle
\end{nouppercase}

\setcounter{section}{1}
\section*{Introduction}

The first singular homology of spaces which are not locally contractible can be very complicated, see \cite{BarrattMilnor1962, cf, eda4, Eda2016}. In particular for a one-dimensional Peano continuum $X$, Eda showed that $H_1(X)$ is free abelian of finite rank or $H_1(X) = \mathbb Z^\mathbb N \oplus P/S$ where $P/S=\mathbb Z^{\mathbb N}/\bigoplus_{i\in\mathbb N} \mathbb Z$, which is a cotorsion group  \cite{Eda2016}.  For subsets of $\mathbb R^3$ it is also possible for a Peano continuum to have first homology isomorphic to $P/S$, for example Griffith's double cone \cite{EdaFischer2016}.  More complicated homology groups are possible for higher dimensional Euclidean spaces. For example Herfort and Hojka show that the shrinking wedge of projective planes has first homology group isomorphic to  $\bigl(\bigoplus_\mathbb N \mathbb Z/2\mathbb Z\big) \oplus P/S$ \cite{HerfortHojka17(2)}.  

A \emph{polyhedral expansion} for a Peano continuum $X$ is an inverse sequence of finite polyhedra $\{P_i\}$ such that $X =\varprojlim P_i$.   Let $\check H_1(X) = \varprojlim H_1(P_i)$.   This group is commonly referred to as the first \emph{\v{C}ech homology group} of $X$ even though it is not part of a classical homology theory because it does not always satisfy the long exact sequence of a pair.  
A \emph{cotorsion group} is an abelian group for which every extension by a torsion-free group splits, see (\ref{defn: cotorsion}).

\begin{thmm}[\ref{thmm: splitting}]
  If $X$ is a Peano continuum  with $\check H_1(X)$ torsion-free, then the first singular homology of $X$ splits as $H_1(X) = \check H_1(X)\oplus K$, where $K$ is the first homology shape kernel of $X$, a cotorsion group.
\end{thmm}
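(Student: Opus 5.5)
We argue with the natural homomorphism $\phi\colon H_1(X)\to \check H_1(X)$, obtained by composing $H_1(X)\to H_1(P_i)$ for a polyhedral expansion $X=\varprojlim P_i$. By definition the homology shape kernel is $K=\ker\phi$. The theorem then follows from three facts: (1) $\phi$ is surjective; (2) $K$ is cotorsion; (3) $\check H_1(X)$ is torsion-free, which is the hypothesis. Given these, the short exact sequence
\[
0\to K\to H_1(X)\xrightarrow{\ \phi\ }\check H_1(X)\to 0
\]
is an extension of the cotorsion group $K$ by a torsion-free group. By the definition of cotorsion (\ref{defn: cotorsion}) such an extension splits, which gives $H_1(X)\cong \check H_1(X)\oplus K$.

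\emph{Surjectivity.} We use that $X$ is a Peano continuum. Choose the expansion from nerves of a sequence of finite covers $\U_i$ by connected open sets with $\mesh(\U_i)\to 0$, where each $\U_{i+1}$ refines $\U_i$ and is chosen "Lebesgue-small", so that consecutive nerve maps are simplicial. Take a compatible element $(z_i)\in\varprojlim H_1(P_i)$ and represent each $z_i$ by an edge loop. Realize $z_1$ as a loop in $X$, then successively correct it by small loops inside stars of $\U_i$. The difference $z_{i+1}-\operatorname{proj}(z_i)$ is represented by combinations of loops that are small in $X$. The resulting infinite concatenation converges because the diameters shrink and $X$ is locally path connected. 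This is the standard argument that $\pi_1\to\check\pi_1$ has dense image, upgraded to an actual limit loop. One technical point needs care: each $z_i$ is a homology class, not a single loop. Since $P_i$ is connected, we realize it by a single based loop, using a commutator-free product in $H_1$.

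\emph{$K$ is cotorsion.} This is the main obstacle. The plan is to show that $K$ is Ext-injective for torsion-free groups, i.e.\ $\operatorname{Ext}(\mathbb Q,K)=0$ and $K$ is reduced-cotorsion; equivalently, $K$ is algebraically compact modulo a divisible part. Concretely, it suffices to show that every "$\mathbb Z$-adic Cauchy" system is solvable in $K$. Given $k_n\in K$, we want $x_0,x_1,\dots\in K$ with $x_n=(n+1)x_{n+1}+k_n$; the solvability of all such systems characterizes cotorsion groups. To solve them, represent the $k_n$ by loops that become null-homologous in $P_{m}$ for large $m$. Using that elements of $K$ can be represented by loops nullhomologous at arbitrarily fine scales, we arrange that the representatives of $k_n$ have diameter going to $0$ and converge to a point. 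Then $x_n$ is defined as the class of an infinite concatenation $\prod_{j\ge n} (\text{loop for }k_j)^{\,(j)!/(n)!}$, suitably arranged so that the infinite words make sense. This is the Eda-style infinite product argument; the relation $x_n-(n+1)x_{n+1}=k_n$ is checked by a homotopy shifting the tail. The classes $x_n$ lie in $K$ because each tail is small, hence trivial in every $P_i$. The delicate step is making the representatives of $k_n$ small and nullhomologous at the required scales, so that the infinite product is a continuous loop and lies in the kernel. I expect this step to require a lemma that any element of $K$ is homologous to a sum of loops each contained in a small neighborhood. I would try to derive that lemma from the nerve construction by pulling back $2$-chains in $P_i$ to small singular "approximate" $2$-chains in $X$.
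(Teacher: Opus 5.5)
Your overall architecture matches the paper's. You use the exact sequence $0\to K\to H_1(X)\to\check H_1(X)\to0$ with $K$ cotorsion (the paper identifies $K=\overline C/C$), and it splits because $\check H_1(X)$ is torsion-free. Your reduction of cotorsion to solvability of $x_n=k_n+(n+1)x_{n+1}$ is also correct; it is a special case of the Higman-completeness criterion the paper uses.

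\textbf{Main gap.} It is the step you flag as delicate, and as stated it is false: in general you cannot choose representatives of the $k_n$ that are small loops converging to a point. What is true (Lemmas \ref{lem: factor} and \ref{lem: spanier small coset representatives}) is weaker. Modulo $C$, $k_n$ is represented by a finite product $\prod_l a_l*c_l*\overline{a_l}$ of conjugates of small loops $c_l$. The base points of the $c_l$ are scattered over $X$, and the connecting paths $a_l$ are not small.

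This cannot be improved. Join two Hawaiian earrings with wild points $p,q$ by an arc. Let $k_n=[c_n]+[d_n]$, where $c_n,d_n$ are shrinking infinite products of commutators at $p$ and at $q$ (e.g.\ $\prod_{m\ge n}[a_{2m},a_{2m+1}]$), which are not finite products of commutators. Then $k_n\in K$. A loop missing $q$ lies in a component of $X\setminus\{q\}$, and the retraction onto the earring at $q$ kills every such component. That retraction sends $k_n$ to $[d_n]\neq0$, so every representative of $k_n$ passes through $q$, and symmetrically through $p$.

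In general the number of such base points grows with $n$. The powers also force the connecting paths to be traversed $n_1\cdots n_i$ times, so these paths must be organized to shrink as well. This is exactly what Section \ref{Sec: Main Construction} supplies:
\begin{enumerate}
\item a nullhomotopic space-filling curve $\gamma$;
\item Lemma \ref{lem: right basepoints}, which relocates the $i$-th coefficient to loops ${}_ib_\bfs$ at the dyadic points $\gamma(\bfs)$ with $|\bfs|\le i$;
\item the self-similar dyadic construction of loops with $\beta_i=_C\bigl(\ast_{|\bfs|\le i}\,\eta_\bfs*{}_ib_\bfs*\overline{\eta_\bfs}\bigr)^{n_i}*\beta_{i+1}^{n_i}$ (Lemma \ref{higman solutions}).
\end{enumerate}
Your surjectivity sketch has the same defect. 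The stage-$i$ corrections are products of conjugates of small loops at many points, so their infinite concatenation does not converge as you describe. Either cite Eda--Kawamura, or use the case $n_i=1$ of the construction (Lemma \ref{complete rel H_n}, Theorem \ref{thm:short exact fundamental group}).

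\textbf{Second problem.} Even at a single base point, your flat product $x_n=[\prod_{j\ge n}\ell_j^{j!/n!}]$ is not shown to satisfy the relation by ``shifting the tail.'' You would need
\[
[\textstyle\prod_{j>n}\ell_j^{j!/n!}]=(n+1)\,[\textstyle\prod_{j>n}\ell_j^{j!/(n+1)!}],
\]
which is an infinite rearrangement, not a homotopy. Infinitely many commutator moves need not stay in $C$. For example, in the Hawaiian earring $2[\prod_j a_j]\neq[\prod_j a_j^2]$ in $H_1$. To see this, project to the free group on $a_1,\dots,a_N$ and then to $\Lambda^2\mathbb Z^N$. The difference goes to $-\sum_{i<j}e_i\wedge e_j$, which has rank at least $N-1$. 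So the difference is not a finite product of commutators.

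What works is the nested form $x_n=\ell_n*(x_{n+1})^{n+1}$, that is, $\ell_n*\bigl(\ell_{n+1}*(\ell_{n+2}*\cdots)^{n+2}\bigr)^{n+1}$. For this form the relation holds up to reparametrization. This nested mechanism is what Lemma \ref{simple equations} implements, and the paper's main construction is its extension to growing sets of base points.
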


Since the shape homology of subset of $\mathbb R^3$ is torsion free by Alexander duality, we  have the following corollary.

\begin{corr}[\ref{cor: homology splitting}]
  If $X\subset \mathbb R^3$ is a Peano continuum, then $H_1(X) = \mathbb Z^\lambda\oplus K$, where $K$ is the first homology shape kernel of $X$, a cotorsion group, and $\lambda$ is a countable cardinal.
\end{corr}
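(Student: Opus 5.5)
The corollary will follow from Theorem \ref{thmm: splitting} once two facts about $\check H_1(X)$ are established for a Peano continuum $X\subset\mathbb R^3$: it is torsion-free, and it is in fact free abelian of countable rank $\lambda$. The splitting $H_1(X)=\check H_1(X)\oplus K$, with $K$ the cotorsion shape kernel, then comes directly from the theorem, and we substitute $\check H_1(X)\cong\mathbb Z^\lambda$.

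\emph{Step 1: identify $\check H_1(X)$ by duality.} Since $X$ is compact in $\mathbb R^3\subset S^3$, Alexander duality for \v{C}ech theory gives $\check H^1(X)\cong \tilde H_1(S^3\setminus X)$ and $\check H_1(X)\cong \tilde H^1(S^3\setminus X)$; the second isomorphism is in the form appropriate to the open set $U=S^3\setminus X$. Now $U$ is an open $3$-manifold, so it is the increasing union of compact polyhedra $Q_j$. Hence
\[
H^1(U)\cong\operatorname{Hom}(H_1(U),\mathbb Z),
\]
using the universal coefficient theorem and the vanishing of $\operatorname{Ext}(H_0,\mathbb Z)$ because $H_0$ is free. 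A group of the form $\operatorname{Hom}(A,\mathbb Z)$ is torsion-free, so $\check H_1(X)$ is torsion-free and the theorem applies. Alternatively, one can argue directly from a polyhedral expansion by neighborhoods $P_i\subset\mathbb R^3$: here $H_1(P_i)$ is torsion-free by duality applied to the compact polyhedron $P_i$, and an inverse limit of torsion-free groups is torsion-free.

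\emph{Step 2: show $\check H_1(X)$ is free of countable rank.} The inverse limit description alone yields only a subgroup of $\prod H_1(P_i)$, so an additional argument is needed. For a Peano continuum $X$, there is a standard homomorphism $\pi_1(X)\to\check\pi_1(X)\to\check H_1(X)$. I would combine two ingredients: the shape map $H_1(X)\to\check H_1(X)$ has countable image when the target is defined via the polyhedral limit (or the splitting theorem identifies $\check H_1(X)$ with a subgroup of $H_1(X)$), and $\check H_1(X)\cong\operatorname{Hom}(H_1(U),\mathbb Z)$ is a dual group. More concretely, I would show $\check H_1(X)$ is countable. Local path-connectedness gives that the bonding images stabilize up to the image of $H_1(X)$, and in fact the image of $H_1(X)\to\check H_1(X)$ is all of $\check H_1(X)$ by the theorem's surjectivity. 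Since $X$ is Peano, $\pi_1(X)$ need not be countable, so instead I would use the following fact: a Peano continuum has only countably many homotopy classes of maps into $S^1$. This holds because such maps are determined up to homotopy by their restriction to a fine finite polyhedral approximation, using local connectedness and the fact that nearby maps into $S^1$ are homotopic. Thus $\check H^1(X)=[X,S^1]$ is countable, and therefore $H_1(U)$ is countable.

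Dualizing, $\check H_1(X)\cong\operatorname{Hom}(H_1(U),\mathbb Z)$ with $H_1(U)$ countable. Such a dual group is not free in general (for instance, $\operatorname{Hom}(\bigoplus\mathbb Z,\mathbb Z)=\mathbb Z^{\mathbb N}$), so this route alone does not finish the argument. The main obstacle is therefore to ensure the rank-and-freeness conclusion. I expect that the intended reading of $\mathbb Z^\lambda$ may be the product, in which case the step reduces to showing that $\operatorname{Hom}(A,\mathbb Z)$ is a product of copies of $\mathbb Z$ for countable torsion-free $A$ arising as $H_1(U)$. Writing $H_1(U)=\varinjlim H_1(Q_j)$ with the $H_1(Q_j)$ finitely generated free, one gets $\operatorname{Hom}(H_1(U),\mathbb Z)=\varprojlim \mathbb Z^{n_j}$, and after choosing compatible bases the images of the bonding maps are direct summands. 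One must also control the potential $\operatorname{Ext}$ term. This makes the limit a countable product of copies of $\mathbb Z$, that is, $\mathbb Z^\lambda$ with $\lambda\le\aleph_0$, which is the statement.
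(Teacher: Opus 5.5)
Your reduction to Theorem \ref{thmm: splitting} is the paper's route, and Step 1 is fine. Your ``alternatively'' argument, Alexander duality on polyhedral neighborhoods $P_i\subset\mathbb R^3$, is exactly what the paper does. The gap is Step 2, the identification $\check H_1(X)\cong\mathbb Z^\lambda$. This is the only point needing an argument beyond the theorem; the paper leaves it as an exercise about $\varprojlim H_1(P_i)$ with each $H_1(P_i)$ finitely generated free.

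Several claims in Step 2 are false. Neither $\check H_1(X)$ nor the image of $H_1(X)\to\check H_1(X)$ need be countable: for the Hawaiian earring the target is $\mathbb Z^{\mathbb N}$, and the map is onto by Theorem \ref{thm:short exact fundamental group}. More seriously, the step meant to close the argument does not hold for towers of finitely generated free groups. You claim that after choosing compatible bases the images of the bonding maps are direct summands. In $\mathbb Z\xleftarrow{2}\mathbb Z\xleftarrow{2}\cdots$ the image $2\mathbb Z$ is not a summand, whatever bases you choose. Such bonding maps do occur for nested polyhedral neighborhoods: take a circle winding twice around a solid torus $P_i$, with $P_{i+1}$ a thin tubular neighborhood of the circle. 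Nothing in your argument rules this out. The ``potential Ext term'' is also a red herring, since $\operatorname{Hom}(\varinjlim B_j,\mathbb Z)=\varprojlim\operatorname{Hom}(B_j,\mathbb Z)$ holds exactly. So the detour through $H_1(S^3\setminus X)$ only returns you to an inverse limit of finitely generated free groups, which is where you started.

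The missing idea is to replace the terms of the tower by the images of the limit. Let $L=\varprojlim A_i$ with $A_i$ finitely generated free, and let $L_i$ be the image of the projection $L\to A_i$.
\begin{itemize}
\item Then $L=\varprojlim L_i$, and each $L_i$ is finitely generated free.
\item Each bonding map $L_{i+1}\to L_i$ is surjective, because $L\to L_i$ factors through $L_{i+1}$. Hence each bonding map splits.
\item Choose sections $s_i$ and set $K_i=\ker(L_{i+1}\to L_i)$. The map $(x_i)\mapsto(x_1,\,x_2-s_1(x_1),\,x_3-s_2(x_2),\dots)$ is an isomorphism $L\cong L_1\times\prod_i K_i$.
\end{itemize}
This is a product of countably many copies of $\mathbb Z$, i.e.\ $\mathbb Z^\lambda$ with $\lambda\le\aleph_0$.

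If you prefer to stay on your dual side, there is an alternative. Write $\operatorname{Hom}(A,\mathbb Z)=\operatorname{Hom}(A/N,\mathbb Z)$, where $N$ is the intersection of the kernels of all homomorphisms $A\to\mathbb Z$. For countable $A$, choose countably many functionals separating the points of $A/N$; this embeds $A/N$ in $\mathbb Z^{\mathbb N}$, so $A/N$ is free by Specker's theorem. With either lemma in place of your Step 2, the proof goes through.
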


The known examples follow a particular pattern, which leads us to pose the following:

\begin{conjj}
  If $X$ is a Peano continuum in $\mathbb R^3$, then the first homology shape kernel is isomorphic to $\displaystyle{\mathbb Z^{\mathbb N}/\bigoplus\limits_{i\in\mathbb N} \mathbb Z}$ or is trivial.   Equivalently $H_1(X)$ is isomorphic to $\mathbb Z^\lambda \oplus \left(\mathbb Z^{\mathbb N}/\bigoplus\limits_{i\in\mathbb N} \mathbb Z\right)$, for some countable cardinal $\lambda$, or $H_1(X)= \mathbb Z^\lambda$ for some finite cardinal $\lambda$.
\end{conjj}

In order to prove Theorem 1 (\ref{thmm: splitting}), we construct an infinite product of loops that are based at distinct points, see Section \ref{Sec: Main Construction}.  The set of basepoints for the loops could potentially be a countable dense subset.  Since this product will depend on the paths between the basepoints, this approach naturally leads to understanding of properties of the first homology group or subgroups of the fundamental group containing the commutator subgroup, which we study using commutator calculus.  

Shelah used an infinite word structure on fundamental groups to show that the rationals are not the fundamental group of any path-connected, locally path-connected, compact metric space \cite{Shelah1988}. 
Morgan and Morrison used the infinite words to show that the fundamental group of the Hawaiian earring embedded into the inverse limit of free groups \cite{mm}.    Eda introduced free $\sigma$-products in \cite{Eda92} as one approach to formalizing the word structure on the fundamental group of the Hawaiian earring. Cannon and Conner further studied the group structure of the Hawaiian earring in \cite{cc1}.

Infinite word arguments were used to show that every homomorphism between fundamental groups of one-dimensional or planar Peano continua is induced by continuous maps up to a change of basepoint isomorphism \cite{ConnerKent2019, eda, eda7, Kent18}, as well as to show how to recover the topological space from the fundamental group for one-dimensional Peano continua that are not locally simply connected at any point \cite{ce}.  


We use this infinite word structure built in Section \ref{Sec: Main Construction} to prove the following results about building cotorsion subgroups, which is the central step in our proof of Theorem \ref{thmm: splitting}.

\begin{thmm}[\ref{thm: higman complete}]
    Let $X$ be a Peano continuum and $H$ a subgroup of $\pi_1(X,x_0)$ containing the commutator subgroup.  Then $\overline H/ H$ is cotorsion, where $\overline H$ is the closure of $H$ in the shape topology.
\end{thmm}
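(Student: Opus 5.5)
To show $\overline H/H$ is cotorsion, I will use the standard criterion that an abelian group $A$ is cotorsion iff $\mathrm{Ext}(\mathbb Q,A)=0$. This is equivalent to the following \emph{infinite-sum} property (Fuchs): for every sequence $a_1,a_2,\dots\in A$ and every sequence of integers $n_1,n_2,\dots$ with $n_1\mid n_2\mid\cdots$, the system
\[
x_i = a_i + \tfrac{n_{i+1}}{n_i}\,x_{i+1}
\]
has a solution in $A$. Informally, sums $\sum_i m_i a_i$ with $m_i=n_i/n_1$, and their tails, are defined. Since $H\supseteq[\pi_1,\pi_1]$, the group $\overline H/H$ is abelian (note $\overline H\supseteq H$ is a subgroup, automatically normal since it contains the commutators). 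So it suffices to solve such systems: given $g_i\in\overline H$ and integers $m_i$ with $m_i\mid m_{i+1}$, produce elements $y_i\in\overline H$ with $y_i\equiv g_i\, y_{i+1}^{m_{i+1}/m_i}\pmod H$.

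The key geometric step is to replace each $g_i$, modulo $H$, by a product of loops that are small. Because $g_i\in\overline H$, for every open neighbourhood $U$ of the identity in the shape topology there is $h\in H$ with $g_ih\in U$. Fix a sequence of finer and finer covers $\mathcal U_k$ of $X$ (Lebesgue numbers $\to0$), with shape-topology neighbourhoods $N_k$ of the identity consisting of classes that become trivial in the nerve of $\mathcal U_k$. Choose $h_i\in H$ with $g_ih_i\in N_{k_i}$ for rapidly increasing $k_i$. Using Peano continuity (local path connectivity plus compactness), an element of $N_k$ is represented by a product of finitely many loops of the form $\alpha\beta\alpha^{-1}$, where $\beta$ is a loop of diameter at most $\epsilon_k$ based at some point $x$ and $\alpha$ is a path from $x_0$. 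In $\pi_1/H$, which is abelian, the conjugating paths are irrelevant, so $g_i$ is congruent mod $H$ to a finite product of small loops $\beta_{i,1}\cdots\beta_{i,r_i}$, each with diameter $\to0$ as $i\to\infty$.

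Next I will use the construction of Section~\ref{Sec: Main Construction}: an infinite product of loops based at distinct points, with diameters tending to $0$, connected by a fixed system of paths, yields a well-defined continuous loop. Apply it to the family consisting of the loops $\beta_{i,j}$, each repeated $m_i$ times, to form $y_1$. Form $y_{i}$ similarly from the tail starting at index $i$, with multiplicities $m_\ell/m_i$; these multiplicities are integers by the divisibility hypothesis. Then $y_i$ and $g_i\,y_{i+1}^{m_{i+1}/m_i}$ differ by a reordering of the factors and by changes of the connecting paths. Each such difference is a product of commutators, or at least lies in the closure of the commutator subgroup. I also need $y_i\in\overline H$, which should follow because the tails of $y_i$ lie in arbitrarily small shape neighbourhoods and the finite initial parts lie in $\overline H$.

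The main obstacle is that last congruence. An infinite rearrangement of factors is not obviously a \emph{finite} product of commutators, so it is not obviously in $H$. The plan is to first handle a finite prefix, where the rearrangement is a finite product of commutators and hence in $H$. The remaining infinite tail is then written as a single product of two infinite words that agree up to permutation. For this I would use commutator calculus on the infinite word structure, in the spirit of the Hawaiian-earring arguments: exhibit the infinite rearrangement as one commutator $[a,b]$, or a bounded number of them, where $a,b$ are themselves infinite products built by the main construction. To make this possible, I would arrange the basepoints and connecting paths at the outset, choosing a tree-like system of paths converging appropriately, so that the needed interleavings are realized by continuous loops.
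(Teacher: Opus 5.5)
\textbf{There is a genuine gap: the solution you build is the wrong shape, and in general it fails.}

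Your reduction is sound and matches the paper's. It suffices to solve the recursive systems in $\overline H/H$; your divisibility-chain form of $\operatorname{Ext}(\mathbb Q,-)=0$ works as well as the Higman-completeness criterion the paper uses. Each coefficient can be replaced modulo $H$ by a finite product of conjugates of arbitrarily small loops (Lemmas \ref{lem: spanier small coset representatives} and \ref{lem: factor}). Your argument that the solutions lie in $\overline H$ is also fine.

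The problem is how you build the solution. You take $y_i$ to be an \emph{expanded}, linearly ordered infinite product in which each small loop occurs with multiplicity $m_\ell/m_i$. You then hope to show that $y_i$ and $g_i y_{i+1}^{m_{i+1}/m_i}$ differ by finitely many commutators. No such principle exists, and for your candidate the congruence is false. Here is a counterexample in the Hawaiian earring group.
\begin{itemize}
\item Take $H=C\cdot\langle a_1,a_2,\dots\rangle$. Its closure is all of $\pi_1$, and $H\cap\overline C=C$. The trivial cosets may legitimately be represented by the small loops $a_\ell$.
\item Take $m_\ell=2^{\ell-1}$. Your recipe then gives $y_1=a_1v$ and $y_2=u$, where $u=a_2a_3^2a_4^4\cdots$ and $v=a_2^2a_3^4a_4^8\cdots$.
\item The required congruence $y_1\equiv a_1y_2^2 \pmod H$ amounts to $u^2v^{-1}\in H$. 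Since $u^2v^{-1}\in\overline C$, this is the same as $u^2v^{-1}\in C$.
\item Retract onto the free group $F$ on $a_2,\dots,a_n$ and pass to $[F,F]/[F,[F,F]]\cong\Lambda^2(F^{ab})$. The image of $u^2v^{-1}$ is $\pm\sum_{2\le j<k\le n}2^{j+k-4}\,a_j\wedge a_k$, a skew form of rank at least $n-2$.
\item A product of $c$ commutators $\prod_r[p_r,q_r]$ maps to $\sum_r \bar p_r\wedge\bar q_r$, which has rank at most $2c$.
\end{itemize}
Since $n$ is arbitrary, $u^2v^{-1}\notin C$. No commutator calculus can repair this; the candidate itself has to change.

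\textbf{The missing idea is to make the recursion hold by construction}, up to a finite rearrangement, using a nested, self-similar word rather than an expanded one. With a single basepoint this is $y_i=(g_i(g_{i+1}(\cdots)^{n_{i+1}})^{n_i}$, which satisfies $y_i=(g_iy_{i+1})^{n_i}$ on the nose (Lemma \ref{simple equations}). With many basepoints the paper proceeds as follows.
\begin{itemize}
\item It fixes a nullhomotopic space-filling curve $\gamma$. It moves the small loops representing the $i$-th coefficient to the dyadic basepoints $\gamma(\mathbf c)$ with $|\mathbf c|\le i$ (Lemma \ref{lem: right basepoints}).
\item At each basepoint, it first amalgamates the loops coming from all later coefficients into one nested loop (Lemma \ref{higman solutions}, via Lemma \ref{simple equations}).
\item It then realizes the tree-indexed nested word of Section \ref{construction}. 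There the path is the same loop on every component of each $B_{\mathbf c}$, and consecutive basepoints are joined by short arcs $\delta$ of $\gamma$.
\end{itemize}
Those short arcs are also what make the infinite concatenation continuous, which your ``fixed system of paths'' leaves unaddressed. Because of the self-similarity, $\beta_j$ is, up to $=_C$, a finite product of conjugates of finitely many loops: the $\alpha_{\mathbf c}$ with $|\mathbf c|=j$, and identical copies of the blocks $\beta_1|_{C_{\mathbf c}}$ with $|\mathbf c|=j+1$. So the relation $\beta_j=_C(\cdots)^{n_j}*(\beta_{j+1})^{n_j}$ requires only a finite abelian rearrangement, which lies in $C\subseteq H$.
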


We will refer to the closure, in the shape topology, of the commutator subgroup of the fundamental group as the \emph{infinite commutator subgroup}, see Definition \ref{defn:infinite commutator subgroup}.  

\begin{corr}
    The quotient of the infinite commutator subgroup by the commutator subgroup of the fundamental group of a Peano continuum is cotorsion.
\end{corr}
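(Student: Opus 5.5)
This corollary is the special case of Theorem \ref{thm: higman complete} with $H = [\pi_1(X,x_0),\pi_1(X,x_0)]$, the commutator subgroup itself. The plan is to check the hypotheses and then read off the conclusion.

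First, $H$ is a subgroup of $\pi_1(X,x_0)$ and trivially contains the commutator subgroup. Second, by Definition \ref{defn:infinite commutator subgroup}, the infinite commutator subgroup is exactly $\overline H$, the closure of $H$ in the shape topology. Theorem \ref{thm: higman complete} then states that $\overline H/H$ is cotorsion, and this quotient is precisely the infinite commutator subgroup modulo the commutator subgroup.

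Two small points should be verified so that the statement makes sense. The first is that $\overline H/H$ is abelian, which is needed because cotorsion is defined for abelian groups. This holds because $H$ contains $[\overline H,\overline H]\subseteq[\pi_1,\pi_1]=H$. The second is that $H$ is normal in $\overline H$, which is automatic since $H$ is normal in all of $\pi_1(X,x_0)$.

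There is no real obstacle here. All of the substantive work, namely the infinite product construction of Section \ref{Sec: Main Construction} used to solve the extension or divisibility problems witnessing cotorsion, is contained in Theorem \ref{thm: higman complete}. The only care needed is to match the definition of the infinite commutator subgroup with the closure appearing in that theorem.
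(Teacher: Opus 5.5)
Your proposal is correct and is the paper's argument: the corollary is Theorem \ref{thm: higman complete} with $H$ taken to be the commutator subgroup, whose shape-topology closure is the infinite commutator subgroup by Definition \ref{defn:infinite commutator subgroup}; the paper also records this fact inside Theorem \ref{thm:short exact homology}. Your remarks that $H$ is normal in $\overline H$ and that $\overline H/H$ is abelian are correct and harmless.
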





  Another application of our construction in Section \ref{Sec: Main Construction} is to show that certain pro-coverings are path-connected. (An inverse limit of covering spaces is a \emph{pro-covering}, see Definition \ref{def: pro-cover}.) In general, pro-coverings need not be path-connected.  Even simple examples give rise to non-path-connected pro-coverings, for instance  solenoids are inverse limits of covering spaces of a circle, see \cite{ConnerHerfortKentPavesic2021, ConnerHerfortPavesic2018} for additional examples.  Here we show that when considering covering spaces corresponding to subgroups containing the commutator subgroup of the fundamental group we are able to show the following.

\begin{thmm}[\ref{thm:path connected-closed}]
    Let $X$ be a Peano continuum.  Let $H$ be a closed subgroup of $\pi_1(X,x_0)$ that contains the commutator subgroup.  Then there exists a unique path-connected pro-covering $\rho:(E, e_0) \to (X, x_0)$ such that $\rho_*\bigl(\pi_1(E,e_0)\bigr)=H$.
\end{thmm}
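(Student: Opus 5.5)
We will construct $E$ as an inverse limit of covering spaces indexed by the open subgroups of $\pi_1(X,x_0)$ that contain $H$. Since $X$ is a Peano continuum, it is locally path-connected. In the shape topology, a basis of neighborhoods of the identity is given by the subgroups $\pi(\mathcal U,x_0)$, the normal subgroups generated by lassos subordinate to open covers $\mathcal U$. Each open subgroup $G\supseteq \pi(\mathcal U,x_0)$ therefore corresponds to a connected covering space $p_G\colon (X_G,x_G)\to (X,x_0)$ with $p_{G*}\pi_1(X_G,x_G)=G$.

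\textbf{Construction.} Consider all open subgroups $G$ with $H\le G$. Because $H$ is closed, $H=\bigcap G$ over this family. The family is directed under intersection, since the intersection of two open subgroups is open. The covering maps $X_{G'}\to X_G$ for $G'\le G$ are defined by lifting criteria and are compatible with the basepoints. Set $E=\varprojlim X_G$, with $e_0$ the thread of basepoints and $\rho$ the projection. Each $G$ contains $[\pi_1,\pi_1]$, so each $X_G$ is a regular abelian covering. Hence the deck groups $\pi_1/G$ form an inverse system of abelian groups, and the fibre of $\rho$ is $\varprojlim \pi_1/G$. Metrizability is not needed; only path-lifting is. One could reduce to a countable cofinal subfamily if $\pi_1$ modulo the open normal subgroups has countable structure, but this is unnecessary.

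\textbf{Computing $\rho_*\pi_1(E)$.} First, $\rho_*\pi_1(E,e_0)\subseteq \bigcap G=H$. This holds because a loop in $E$ projects to a loop in each $X_G$, so its image lies in every $G$. Conversely, a loop $\alpha$ in $H$ lifts to a loop in every $X_G$. These lifts are compatible by uniqueness of path lifting, so they define a loop in $E$ based at $e_0$. Hence $\rho_*\pi_1(E,e_0)=H$, once we know $E$ is the right object. Uniqueness will follow from the universal property of pro-coverings stated earlier, namely lifting of maps from path-connected, locally path-connected spaces, or more simply by the standard argument. Suppose $E'$ is another path-connected pro-covering with image $H$. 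Then both $E$ and $E'$ are inverse limits of coverings with $\rho_*$-image $H$. The projections of $E'$ to its stages factor through covers $X_G$, and path-connectivity forces the induced map $E'\to E$ to be a bijection on path components. Since both spaces are path-connected, it is a homeomorphism. I expect to rely on the earlier characterization of pro-coverings for this step.

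\textbf{Main obstacle: path-connectivity of $E$.} A point of $E$ is a compatible thread $(g_G)\in\varprojlim \pi_1/G$. We must find a single path in $X$ starting at $x_0$ whose lifts end at $g_G$ for every $G$. For a countable cofinal chain $G_1\ge G_2\ge\cdots$, this amounts to realizing an infinite product: we need loops $\beta_n\in G_{n}$ representing the successive corrections $g_{n+1}g_n^{-1}$ with shrinking diameter. The natural candidate is $\beta_1\beta_2\cdots$, and it converges because $G_n\supseteq\pi(\mathcal U_n,x_0)$ for covers of mesh tending to zero.

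The difficulty is that a representative of a correction lying in an open subgroup need not be small. It is only a product of lassos $\gamma\ell\gamma^{-1}$ times something in $H$, and the small lassos sit at various basepoints with long tails. Here we use that $G\supseteq[\pi_1,\pi_1]$: modulo commutators, the tail conjugations are irrelevant, so each correction is represented by a product of small loops based at distinct points. This is exactly the infinite product of loops based at distinct points from Section \ref{Sec: Main Construction}, together with the commutator calculus there showing that the result lies in the right coset modulo each $G_n$. Applying that construction yields a path whose lift realizes the given thread, so $E$ is path-connected. If the directed system is not countable, I would first show that the fibre is determined by a countable cofinal subsystem, using separability of $X$ and a countable basis of covers.
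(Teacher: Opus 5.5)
Your existence argument is essentially the paper's, but the uniqueness part has a genuine gap.

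\textbf{Existence.} The paper uses the covering sequence with $\rho_{n*}\pi_1(E_n,e_n)=K_n:=H\cdot\ker(\sigma_{n*})$ for a polyhedral expansion. It gets $\rho_*\pi_1(E,e_0)=\bigcap_n K_n=H$ from closedness. It proves path-connectivity by showing that $\pi_1(X,x_0)$ is complete relative to $(K_n)$ (Lemma~\ref{complete rel H_n} with Theorem~\ref{complete imlplies path-connected}), using Lemma~\ref{lem: right basepoints} and Lemma~\ref{higman solutions} with all $n_i=1$. That is the mechanism you sketch. Three points in your version need correcting.
\begin{enumerate}
\item Passing to a sequence is not optional, since a pro-covering is by definition the limit of a covering \emph{sequence}. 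It is immediate, though: every open $G\supseteq H$ contains some $\ker(\sigma_{n*})$, hence contains $K_n$, so $(K_n)$ is cofinal and no separability argument is needed.
\item Convergence does not come from $G_n\supseteq\pi(\mathcal U_n,x_0)$, which every open subgroup satisfies for some cover. It comes from the opposite inclusion $K_n\subseteq H\cdot\pi(\mathcal U_n,x_0)$ with $\mesh\,\mathcal U_n\to 0$ (Lemma~\ref{lem: factor}). You also need to discard the $H$-parts, which is legitimate because $H\le K_m$ for all $m$ and $K_m$ is normal. This gives $K_{n+1}g_{n+1}=K_{n+1}(k_n\cdots k_1g_1)$ with $k_j\in\ker(\sigma_{j*})$.
\item ``Small loops at distinct points'' is not yet valid input for Section~\ref{Sec: Main Construction}. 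Lemma~\ref{lem: right basepoints} is what moves them onto the fixed countable set of dyadic points $\gamma(\bfs)$ of a nullhomotopic space-filling curve.
\end{enumerate}

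\textbf{Uniqueness.} The claim ``path-connectivity forces $E'\to E$ to be a bijection on path components; since both spaces are path-connected, it is a homeomorphism'' is a non sequitur. Any map between path-connected spaces is a bijection on $\pi_0$, and that says nothing about injectivity, surjectivity, or an inverse.

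You also cannot obtain the map $E'\to E$ from the lifting criterion for locally path-connected domains, because these pro-coverings are generally not locally path-connected. For example, take a circle with a Hawaiian earring attached at $p\ne x_0$, and $H=\overline C$. Let $\tau$ be an arc from $x_0$ to $p$ and $\ell_m$ the $m$th earring circle. The fibre points determined by $[\tau*\ell_m*\overline\tau]$ converge to $e_0$. Yet every path in $E$ joining them to $e_0$ lies over a loop through $p$.

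The paper does not prove uniqueness; it cites Theorem~2 of \cite{ConnerHerfortKentPavesic2025}. If you want your own argument, proceed as follows.
\begin{enumerate}
\item Use Lemma~\ref{lem: continuous lift}, whose proof goes through pullbacks of coverings of polyhedra precisely to avoid local path-connectivity. This gives based maps $\theta\colon E'\to E$ and $\theta'\colon E\to E'$ over $X$.
\item The composite $\theta'\circ\theta$ is a based self-map of $E'$ over $X$. It therefore carries each path from $e_0'$ to a lift, starting at $e_0'$, of the same path in $X$.
\item By unique path lifting in an inverse limit of coverings, that lift is the original path. Path-connectivity of $E'$ then gives $\theta'\circ\theta=\mathrm{id}$, and symmetrically $\theta\circ\theta'=\mathrm{id}$.
\end{enumerate}
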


\noindent This allows us to show that the path-connected pro-covering corresponding to the infinite commutator subgroup of the fundamental group is the minimal pro-covering that induces the trivial homomorphism on \v{C}ech cohomology, see Theorem \ref{thm:minimal Cech}.  We are also able to give a characterization of path-connectivity of pro-coverings in terms of maps to simplicial complexes, see Theorem \ref{thm: characterization in terms of maps}

Combining these results and the infinite word structure from Section \ref{Sec: Main Construction} allows us to provide a new proof of Eda and Kawamura's result that $H_1(X)$ surjects onto  $\check H_1(X)$ when $X$ is a Peano continuum \cite{EdaKawamura2000.3}.  Note, Eda and Kawamura actually show that the map is surjective whenever $X$ is a compact locally connected metric space.  The following result follows the definition of the shape topology and Eda and Kawamura's result.

\begin{thmm}[\ref{thm:short exact fundamental group}]
  If $X$ is a Peano continuum and $C$ is the commutator subgroup of $\pi_1(X,x_0)$, then we have the following short exact sequence of groups,
  \[0\to \overline C \to \pi_1(X,x_0) \to \check H_1(X) \to 0,\]  where $\overline C$ is the closure of $C$ in the shape topology and $\check H_1(X)= \varprojlim H_1(P_i)$, for a polyhedral expansion $\{P_i, \sigma_{i,j}\}$ of $X$.
\end{thmm}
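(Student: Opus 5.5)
The plan is to build the sequence from the natural homomorphism $\Phi:\pi_1(X,x_0)\to \check H_1(X)=\varprojlim H_1(P_i)$, identify its kernel with $\overline C$, and then use surjectivity of $H_1(X)\to\check H_1(X)$ (Eda--Kawamura \cite{EdaKawamura2000.3}) to get exactness on the right. Choose maps $p_i:X\to P_i$ compatible with the bonds $\sigma_{i,j}$; we may take them to be the nerve maps associated to a cofinal sequence of finite open covers. These induce
\[\pi_1(X,x_0)\to \pi_1(P_i,p_i(x_0))\to H_1(P_i),\]
and compatibility gives $\Phi$. Since each target is abelian, $C\subseteq\ker\Phi$.

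The kernel is computed from the definition of the shape topology. A basic neighbourhood of the identity is $\ker\bigl(\pi_1(X,x_0)\to\pi_1(P_i)\bigr)$, and these are normal subgroups. Hence the closure of $C$ is
\[\overline C=\bigcap_i C\cdot\ker\bigl((p_i)_*\bigr).\]
Now $g\in\ker\Phi$ if and only if, for every $i$, $(p_i)_*(g)$ lies in the commutator subgroup $[\pi_1(P_i),\pi_1(P_i)]$.

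\begin{itemize}
\item One containment needs a small argument. If $g\in C\ker((p_i)_*)$, then $(p_i)_*(g)\in(p_i)_*(C)$, which is contained in the commutator subgroup, so $g\in\ker\Phi$.
\item The converse is the key step. Given $g\in\ker\Phi$ and $i$, I need $(p_i)_*(g)\in (p_i)_*(C)$. That would follow if $(p_i)_*$ mapped onto the commutator subgroup of $\pi_1(P_i)$, but in general $(p_i)_*$ is not onto.
\item The fix is to pass to a further stage $j>i$. For a Peano continuum with nerve expansions, the image of $\pi_1(X)\to\pi_1(P_i)$ equals the image of $\pi_1(P_j)\to\pi_1(P_i)$ for $j$ large. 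This holds because $X$ is locally path connected, so edge-loops in a fine nerve lift to loops in $X$ up to homotopy in a coarser nerve. This is the standard argument behind the Eda--Kawamura surjectivity, and it can also be obtained from the Section~\ref{Sec: Main Construction} construction applied to finitely many loops.
\item With this in hand, write $(p_j)_*(g)$ as a product of commutators in $\pi_1(P_j)$. Map it to $P_i$, lift each letter to $\pi_1(X)$ via the image equality, and obtain $c\in C$ with $(p_i)_*(c)=(p_i)_*(g)$. Then $g\in C\ker((p_i)_*)$.
\item Intersecting over $i$ gives $\ker\Phi=\overline C$.
\end{itemize}

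For surjectivity, $\Phi$ factors through the Hurewicz map $\pi_1(X,x_0)\to H_1(X)$, which is onto since $X$ is path connected. Composed with the natural map $H_1(X)\to\check H_1(X)$, which is onto by Eda--Kawamura \cite{EdaKawamura2000.3}, this makes $\Phi$ onto. Alternatively, surjectivity can be derived from Theorem~\ref{thm:path connected-closed}, applied to $H=\overline C$: the path-connected pro-covering realises every compatible thread as an endpoint of a lift.

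I expect the main obstacle to be the lifting statement, namely that the images of $\pi_1(X)$ and of $\pi_1(P_j)$ in $\pi_1(P_i)$ agree for $j\gg i$. It is here that the Peano hypothesis enters: with a suitable choice of expansion, every loop on $P_j$ is shadowed by a loop in $X$. I would prove it by choosing covers by small path-connected open sets and connecting the nerve vertices by short paths in $X$.
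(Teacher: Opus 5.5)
Your argument is correct, but it is organized differently from the paper's. The paper never computes $\ker\Phi$ directly. It forms the pro-covering $E$ from the covers with groups $C\cdot\ker(\sigma_{i*})$ and notes that $\pi_1(E)=\bigcap_i C\cdot\ker(\sigma_{i*})=\overline C$. It then quotes from \cite{ConnerHerfortKentPavesic2021} an exact sequence $0\to\pi_1(E)\to\pi_1(X,x_0)\to\varprojlim H_1(P_i)\to\pi_0(E)\to 0$. Surjectivity is then the statement $\pi_0(E)=0$, i.e.\ Theorem~\ref{thm:path connected-closed}, which rests on Lemma~\ref{complete rel H_n} and the infinite products of Section~\ref{Sec: Main Construction}.

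What the paper's route buys is a new proof of the Eda--Kawamura surjectivity. Your route takes that result as an input from \cite{EdaKawamura2000.3}. This is the shortcut the introduction itself mentions, so it is legitimate, but it is not independent of Eda--Kawamura.

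What your route buys is an explicit treatment of the kernel. The sequence the paper quotes concerns the pullbacks of the commutator covers $\tilde P_i$, whose groups are $\sigma_{i*}^{-1}\bigl([\pi_1(P_i),\pi_1(P_i)]\bigr)$. Matching these with $C\cdot\ker(\sigma_{i*})$ is precisely your image-equality step, which the paper leaves tacit. That step is easier than you expect. For nerves of finite covers by path-connected open sets, your short-path argument already shows that $(p_i)_*$ is onto, so no passage to $j>i$ is needed.

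One aside in your proposal is inaccurate. Image stabilization is a Mittag-Leffler-type condition, and it is not what drives Eda--Kawamura. Surjectivity onto $\varprojlim$ genuinely requires building infinite products, which is what Section~\ref{Sec: Main Construction} does.
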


\noindent This descends to the following short exact sequence for homology: \[0\to \overline C/C \to H_1(X,\Z) \to \check H_1(X) \to 0.\]


\section{Definitions and notations.}

\subsection*{Notational conventions.}  Let $\alpha, \beta, \gamma\colon [0,1]\to X$ be paths in $X$. We will write $\alpha=_p\beta$ if $\alpha$ is a reparametrization of $\beta$. We will write $\alpha =_C \beta$, if there exists a path $\gamma$ such that $\alpha$ is homotopic rel endpoints to $\beta*\gamma$ and $[\gamma]$ is in the commutator subgroup of the fundamental group. We will use the standard convention that  $ \overline \alpha$ is the path $\overline\alpha(t) = \alpha(1-t)$.  We will use $*$ to denote the standard concatenation of paths, i.e., $\alpha*\beta(t) = \alpha(2t)$ for $t\in[0,1/2]$ and $\alpha*\beta(t) = \beta(2t-1)$ for $t\in[1/2, 1]$.  For a loop $\beta$ and integer $n$, we will let $\beta^n $ be a parametrization of the $n$-fold concatenation of $\beta$ for $n>0$, the $|n|$-fold concatenation of $\overline \beta$ for $n<0$, and the constant path at $\beta(0)$ for $n=0$.

\begin{defn}\label{def: pro-cover}

    Let $X$ be a connected and locally path-connected space.  A \emph{covering sequence} is a sequence of covering maps $\bigl\{\rho_i:(E_i, e_i) \to (X, x_0)\bigr\}_{i\in\N}$ such that $\rho_{i*}\bigl(\pi_1(E_i,e_i)\bigr) \leq \rho_{j*}\bigl(\pi_1(E_j,e_j)\bigr)$ whenever $j \leq i$.  By the lifting theorem for covering spaces, for each pair $i,j$ with $j\leq i$, there exists a unique covering map $\sigma_{i,j}: (E_i,e_i) \to (E_j,e_j)$ such that $\rho_i = \rho_j\circ \sigma_{i,j}$. It follows that $\sigma_{j,k}\circ \sigma_{i,j} = \sigma_{i,k}$, whenever $k\leq j\leq i$.

    Thus a covering sequence uniquely produces an inverse sequence, $ \bigl\{E_i, \sigma_{i,j}\bigr\}$.  Let $E$ be the inverse limit of the inverse sequence $ \bigl\{E_i, \sigma_{i,j}\bigr\}$, which we will call a \emph{pro-covering space}.  The inverse limit of the maps $\rho_i:(E_i, e_i) \to (X, x_0)$ is a map  $\rho \colon E\to X$, which we will call a \emph{pro-covering map}.  Let $\sigma_i: (E, e_0) \to (E_i, e_i)$ be the projection map from $E$ to $E_i$. Notice that $\rho$ satisfies $\rho= \rho_i\circ \sigma_i$ for all $i$. Thus pro-covering maps satisfy the following commutative diagram.

\begin{center}
\begin{tikzcd}

     E_1\arrow{dd}{\rho_1}&E_2\arrow{l}{\sigma_{2,1}}\arrow{ldd}{\rho_2} & \cdots\arrow{l}{} &  E_i\arrow{l}{}\arrow{llldd}{\rho_i} & \cdots\arrow{l}{} & E\arrow{llllldd}{\rho}\arrow{l}\arrow[bend right]{ll}{\sigma_i}\arrow[bend right]{llll}{\sigma_2}\arrow[bend right]{lllll}{\sigma_1}\\
      &  & & &\\X &  & & &
\end{tikzcd}
\end{center}

\end{defn}

    Solenoids are well known examples of pro-coverings.  Joel Cohen studied inverse limits of principal fibrations in \cite{Cohen73}. Inverse limits of covering spaces were also studied in \cite{ConnerHerfortKentPavesic2021},  \cite{ConnerHerfortPavesic2018}, \cite{ConnerHerfortPavesic2020}, \cite{Hills2019}. The authors introduced that terminology \emph{pro-coverings} in \cite{ConnerHerfortKentPavesic2025}.

\subsection*{Polyhedral expansion}(see \cite[p.48]{MardesicSegalShape})  A \emph{polyhedral expansion} $\{P_i, \sigma_{i,j}\}$ of $X$ is an inverse sequence of simplicial complexes $P_i$ together with maps $\sigma_{i,j} \colon P_i\to P_j$, for $i> j$, such that $X = \varprojlim \{P_i, \sigma_{i,j}\}$.  We will use $\sigma_i\colon X \to P_i$ to denote the projection from $X$ to $P_i$.  Every compact metric space has a polyhedral expansion, see \cite[Theorem 4.10.10]{Sakai2013}.  When considering pointed functors, we will use $x_0$ to denote the fixed base point of $X$ and $p_i$ a fixed base point of $P_i$ such that $\sigma_i(x_0) = p_i$.
The projection maps, $\sigma_i$, induce a homomorphism  $\partial: \pi_1(X,x_0) \to \varprojlim \bigl\{\pi_1(P_i, p_i), \sigma_{i,j*}\bigr\}=\check \pi_1(X, x_0)$ into the \emph{(first) shape group} of $X$.  We will use $u_i: \varprojlim \bigl\{\pi_1(P_i, p_i), \sigma_{i,j*}\bigr\} \to \pi_1(P_i, p_i)$ to denote the projection homomorphism from the shape group to the fundamental group of $P_i$.

\begin{defn}\label{defn:shape topology}
    The \emph{shape topology} on $\pi_1(X,x_0)$ is the coarsest topology such that for every polyhedron $Q$ and every
continuous map $f\colon X\to Q$ the induced function $f_*\colon\pi_1(X,x_0)\to\pi_1(Q,q_0)$ is continuous, when $\pi_1(Q,q_0)$ is endowed  with the discrete topology.

    Throughout the remainder of the paper $X$ will be a Peano continuum and $\pi_1(X,x_0)$ will be considered a topological space equipped with the shape topology.

\end{defn}

The following lemma is Lemma A.20 in \cite{ConnerHerfortKentPavesic2025}.

\begin{lem}\label{lem:shape is shape}
  Let $X$ be a compact metric space and $\{P_i, \sigma_{i,j}\}$ be any polyhedral expansion of $X$.  Then $$\mathcal S = \bigl\{\sigma_{i*}^{-1} (O) \mid i\in \mathbb N \text{ and } O\subset \pi_1(P_i, p_i) \}$$

  is a subbasis for the shape topology on $\pi_1(X,x_0)$.

\end{lem}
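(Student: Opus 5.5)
We need to show two inclusions. First, each set $\sigma_{i*}^{-1}(O)$ is open in the shape topology. Second, every subbasic open set of the shape topology, i.e.\ every $f_*^{-1}(V)$ with $f\colon X\to Q$ a map to a polyhedron and $V\subset \pi_1(Q,q_0)$, is a union of finite intersections of members of $\mathcal S$. The first inclusion is immediate: $P_i$ is a polyhedron and $\sigma_i\colon X\to P_i$ is continuous, so by Definition \ref{defn:shape topology} the map $\sigma_{i*}$ is continuous into the discrete group $\pi_1(P_i,p_i)$. Hence $\sigma_{i*}^{-1}(O)$ is open for every $O$.

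For the second inclusion I would use the universal factorization property of polyhedral expansions of compact metric spaces (the standard fact from \cite{MardesicSegalShape}). Given a map $f\colon X\to Q$ into a polyhedron, there exist an index $i$ and a map $g\colon P_i\to Q$ with $g\circ\sigma_i\simeq f$. Since $\mathcal S$ is unchanged by passing to a subsequence, I may arrange this for the particular expansion at hand. I would prove the factorization concretely rather than only cite it. Take a finite open cover of $Q$ by open stars of vertices, so that its nerve is $Q$ itself after subdivision. Its pullback under $f$ is an open cover of $X$. Because $X=\varprojlim P_i$ is compact, for large $i$ the pullbacks under $\sigma_i$ of open stars of $P_i$ refine this cover. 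A simplicial approximation then yields $g\colon P_i\to Q$ with $g\circ\sigma_i$ contiguous to, hence homotopic to, $f$.

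The main technical point is basepoints, since the homotopy $g\sigma_i\simeq f$ may move $x_0$. Let $\tau$ be the track of $x_0$ under the homotopy, a path in $Q$ from $g(p_i)$ to $q_0$. Then $f_*=h_\tau\circ g_*\circ\sigma_{i*}$, where $h_\tau$ is the change-of-basepoint isomorphism. This gives
\[
f_*^{-1}(V)=\sigma_{i*}^{-1}\bigl(g_*^{-1}(h_\tau^{-1}(V))\bigr),
\]
which is a single element of $\mathcal S$ with $O=g_*^{-1}h_\tau^{-1}(V)$.

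Combining the two inclusions, $\mathcal S$ consists of open sets and generates a topology containing every defining subbasic set. Therefore the topology generated by $\mathcal S$ is exactly the shape topology, and $\mathcal S$ is a subbasis for it. I expect the only real work to be the factorization step, and in particular the care with basepoints and with the homotopy not being pointed. The homotopy-invariance argument above handles this without needing a pointed factorization.
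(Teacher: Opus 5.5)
The paper gives no argument for this lemma; it is quoted as Lemma A.20 of \cite{ConnerHerfortKentPavesic2025}, so there is no in-paper proof to compare yours with. Your argument is correct, and it is the natural one. Taking $Q=P_i$ and $f=\sigma_i$ shows that $\mathcal S$ is contained in the defining subbasis $\{f_*^{-1}(V)\}$. The homotopy factorization $f\simeq g\circ\sigma_i$, together with $f_*=h_\tau\circ g_*\circ\sigma_{i*}$, gives the reverse containment. So $\mathcal S$ actually \emph{equals} the defining subbasis, which is slightly more than the lemma claims. Your treatment of the basepoint via the track $\tau$ of $x_0$ is right, and no pointed factorization is needed.

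The part I would not accept as written is your self-contained sketch of the factorization. The real difficulty there is not basepoints but that $\sigma_i\colon X\to P_i$ need not be onto.

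\textbf{The gap.} Suppose you set $g(v)=w$ whenever $\sigma_i^{-1}(\mathrm{st}\,v)\subset f^{-1}(\mathrm{st}\,w)$. For a simplex of $P_i$ whose open star misses $\sigma_i(X)$, nothing forces the chosen $w$'s to span a simplex of $Q$. So $g$ need not be simplicial on all of $P_i$.

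\textbf{The standard repair.}
\begin{enumerate}
\item Let $L$ be the subcomplex of all simplices whose closure meets $\sigma_i(X)$, together with their faces. Then $|L|$ is a neighborhood of $\sigma_i(X)$.
\item Subdivide finely enough that each vertex $v$ of $L$ satisfies $\sigma_i^{-1}(\overline{\mathrm{st}}\,v)\subset f^{-1}(\mathrm{st}\,w_v)$ for some vertex $w_v$ of $Q$.
\item If $\sigma_i(x)\in\bar t$, then $f(x)\in\bigcap_{v\in t}\mathrm{st}\,w_v$. Hence $v\mapsto w_v$ is simplicial on $L$. Moreover $g\sigma_i(x)$ and $f(x)$ lie in a common closed simplex of $Q$, which gives the straight-line homotopy.
\item Since $\sigma_i(X)=\bigcap_j\sigma_{j,i}(P_j)$ for an inverse sequence of compacta, choose $j$ with $\sigma_{j,i}(P_j)\subset\operatorname{int}|L|$. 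Then use $g\circ\sigma_{j,i}\colon P_j\to Q$.
\end{enumerate}
If $Q$ is infinite, only the finitely many stars meeting the compact set $f(X)$ are needed.

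Alternatively, simply cite the expansion theorem for inverse limits of compact polyhedra in \cite{MardesicSegalShape}, as the paper does in the proof of Lemma \ref{lem: factor}. With that citation your proof is complete.
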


Given a collection $\U$ of open subsets of $X$, the \emph{$\U$-Spanier subgroup} of $\pi_1(X, x_0)$ is the subgroup
$\pi_1(X, x_0; \U)$ of the fundamental group which is generated by homotopy classes of paths having a representative of the form $(\alpha * \beta)*\overline\alpha$ where $\beta$ is a loop contained in some element of $\U$ and  $\alpha$ is a path from $x_0$ to $\beta(0)$.  

For $\U$ a collection of open sets of $X$, we will let $2\U = \{ U\cup V\mid U,V\in \U\}$.  Recall that a \emph{covering subgroup} of the fundamental group is the image of the fundamental group of a covering space.

\begin{lem}\label{lem: factor}
    Let $X$ be a Peano continuum and $\{P_i, \sigma_{i,j}\}$ a polyhedral expansion of $X$.  For every covering subgroup $K$ of $\pi_1(X,x_0)$, there exists an $i$ such that $\ker(\sigma_{i*})\leq K$,  where $\sigma_{i*}: \pi_1(X,x_0) \to \pi_1(P_i, p_i)$ is the homomorphism induced by the projection map $\sigma_i: X\to P_i$.
\end{lem}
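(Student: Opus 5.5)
The plan is to pass through Spanier subgroups: first show that $K$ contains a Spanier subgroup $\pi_1(X,x_0;\U)$ for some open cover $\U$, and then show that $\ker(\sigma_{i*})$ lies in that Spanier subgroup once $i$ is large.

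\emph{Step 1: $K$ contains a Spanier subgroup.} Let $p\colon (\tilde X,\tilde x_0)\to (X,x_0)$ be a covering with $p_*\pi_1(\tilde X,\tilde x_0)=K$. Take $\U$ to be a cover of $X$ by evenly covered, path-connected open sets. Consider a generator $\alpha*\beta*\overline\alpha$ with $\beta$ a loop in some $U\in\U$. Lift $\alpha$ from $\tilde x_0$; then $\beta$ lifts to a loop in a single sheet over $U$. Hence $[\alpha*\beta*\overline\alpha]\in K$, and so $\pi_1(X,x_0;\U)\le K$. By the Lebesgue number lemma and compactness, we may refine $\U$ to a finite cover by open sets of small diameter. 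We may also replace $\U$ by any refinement, since refining only shrinks the Spanier subgroup.

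\emph{Step 2: pull back a cover of $P_i$.} Since $X=\varprojlim P_i$ is compact metric, for large $i$ there is an open cover $\V$ of $P_i$, say by open stars of vertices of a fine subdivision, such that $\sigma_i^{-1}(\V)$ refines $\U$. I expect to replace $\U$ by something like $2\U$ or a star refinement here, which is presumably why the paper introduced $2\U$. Note that $\sigma_i(X)$ may not be all of $P_i$, but only a neighborhood of the image matters.

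\emph{Step 3: null-homotopic images force membership in the Spanier subgroup.} Let $\gamma$ be a loop at $x_0$ with $\sigma_i\circ\gamma$ null-homotopic in $P_i$. We must show $[\gamma]\in\pi_1(X,x_0;\U)$. Subdivide $\gamma$ into small arcs and record the simplicial-approximation path of $\sigma_i\circ\gamma$. A null-homotopy in $P_i$ can be realized combinatorially as a sequence of elementary moves across 2-simplices and backtracks in the edge-path group of a fine subdivision. To transfer each move back to $X$, choose for each vertex $v$ of $P_i$ a point $x_v\in X$ with $\sigma_i(x_v)$ near $v$. Also choose, for each edge, a short path in $X$ joining these points; these exist because $X$ is Peano, hence uniformly locally path-connected, and $\sigma_i$-fibers are small for large $i$. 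Each triangle move then changes the lifted edge-loop by a conjugate of a small loop lying in a set of $\U$, so it stays within the Spanier subgroup. Each backtrack is trivial. Finally, $\gamma$ itself differs from its lifted edge-loop by a product of small loops.

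\emph{Main obstacle.} The hard part is Step 3: building a "partial inverse" $P_i\to X$ on the 2-skeleton of the image region, with uniform control so that each triangle lifts to a loop of small diameter. This requires choosing $i$ so that $\sigma_i$ is uniformly close to injective in the sense that points with nearby images are close in $X$, which holds by compactness of the inverse limit. I would first try the standard approach of comparing with a nerve: take $P_i$ to be the nerve of a fine cover and use a canonical map back through path choices. If the given expansion is arbitrary, I would factor through a nerve map using the expansion's universal property.
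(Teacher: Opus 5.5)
Your outline follows the paper's proof. The paper (i) takes a finite cover $\U$ by path-connected open sets with $\pi_1(X,x_0;2\U)\le K$, (ii) uses Marde\v{s}i\'c--Segal to find $i$ and a cover $\V$ of $P_i$ with $\sigma_i^{-1}(\V)$ refining $\U$, subdividing so that vertex stars lie in elements of $\V$, and (iii) does not prove the kernel containment itself but cites Lemma 3.4 of \cite{ConnerHerfortKentPavesic2021}. Your Step 1 is fine.

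The gap is in your Step 3. A null-homotopy of $\sigma_i\circ\gamma$ in $P_i$ can leave every neighborhood of $\sigma_i(X)$, and at vertices far from $\sigma_i(X)$ there is no point $x_v$ to choose. Your remark that ``only a neighborhood of the image matters'' is exactly what needs proof. Your proposed remedy, choosing $i$ so that $\sigma_i$ is uniformly close to injective, is only a restatement of the refinement property of Step 2, and it does not control this.

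Here is a concrete failure. Let $X=S^1$, $P_1=D^2$, and $P_j=S^1$ for $j\ge 2$, with $\sigma_{j,1}$ the boundary inclusion and all other bonding maps the identity. Then $\sigma_1\colon S^1\hookrightarrow D^2$ is an embedding. For a fine triangulation of $D^2$, preimages of vertex stars refine any cover $\U$ of $S^1$ by short arcs, and points with nearby images are close. Yet $\ker(\sigma_{1*})=\pi_1(S^1)$, while $\pi_1(S^1,x_0;2\U)$ is trivial and $K=1$ is a covering subgroup. So your argument, run at $i=1$, would prove something false.

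The missing ingredient is control over where null-homotopies live. There are two ways to supply it.

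\emph{Push down from a later index.} Since the $P_j$ are compact, $\sigma_i(X)=\bigcap_{j\ge i}\sigma_{j,i}(P_j)$. Hence for any $\delta>0$ there is $j\ge i$ with $\sigma_{j,i}(P_j)$ inside the $\delta$-neighborhood of $\sigma_i(X)$. If $[\gamma]\in\ker(\sigma_{j*})$ with null-homotopy $F\colon D^2\to P_j$, then $\sigma_{j,i}\circ F$ is a null-homotopy of $\sigma_i\circ\gamma$ that stays within $\delta$ of $\sigma_i(X)$. Triangulate $D^2$ finely and pick $x_a\in X$ with $\sigma_i(x_a)$ within $\delta$ of $\sigma_{j,i}F(a)$, taking $x_a=\gamma(t)$ on the boundary. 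Join adjacent $x_a$ by short paths, using uniform local path-connectedness. Your triangle-by-triangle argument then gives $[\gamma]\in\pi_1(X,x_0;\U)\le K$, so $\ker(\sigma_{j*})\le K$.

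\emph{Go through the nerve, as your last sentence suggests.} Let $\phi\colon X\to N(\U)$ be a canonical map. The expansion property gives $i$ and $g\colon P_i\to N(\U)$ with $g\circ\sigma_i\simeq\phi$, so $\ker(\sigma_{i*})\le\ker(\phi_*)$. In a nerve every simplex comes from a nonempty intersection, so the partial inverse is defined on the whole $2$-skeleton. The edge-path argument, which is where $2\U$ enters, then yields $\ker(\phi_*)\le\pi_1(X,x_0;2\U)$.

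Either device is needed; without one of them Step 3 does not go through.
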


\begin{proof}
Since $K$ is a covering subgroup, there exists a finite open cover $\U$  of $X$ by path-connected open sets  such that $\pi_1(X,x_0;2\U)\subset K$.  By the proof of \cite[Theorem 5 in \S5.2 of Chapter I]{MardesicSegalShape}, there exists an $i$ and an open cover $\V$ of $P_i$ such that $\sigma_i^{-1}(\V)$ refines $\U$.  By  repeatedly barycentrically subdividing $P_i$, we may assume that the open star of any vertex is contained in an element of $\V$,  see \cite[Appendix 1.1, Theorem 4]{MardesicSegalShape}.  Then Lemma 3.4 in \cite{ConnerHerfortKentPavesic2021} implies that $\ker(\sigma_{i*})\leq\pi_1(X,x_0;2\U)\subset K$.
\end{proof}

An element $g$ of a group $G$ is \emph{infinitely divisible}, if there are infinitely many integers $n$ and corresponding nonidentity elements $g_n$ of $G$ so that $(g_n)^n = g$.  The following is a consequence of Theorem 4.4 from \cite{cc3}.

\begin{lem}\label{lem: covering subgroup}

   The kernel of every homomorphism from the fundamental group of a first countable space $X$ to a countable abelian group with no infinitely divisible elements is a covering subgroup.
\end{lem}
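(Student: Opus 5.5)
The plan is to reduce the statement to the local form of Theorem 4.4 of \cite{cc3} and then invoke Spanier's classification of covering subgroups. Fix a homomorphism $\phi\colon \pi_1(X,x_0)\to G$, where $G$ is a countable abelian group with no infinitely divisible elements. I read Theorem 4.4 of \cite{cc3} as the following local statement: for any first countable space $Y$, any point $y\in Y$, and any homomorphism $\psi\colon\pi_1(Y,y)\to G$ into such a group, there is a neighborhood $U$ of $y$ such that $\psi$ kills the image of $\pi_1(U,y)\to\pi_1(Y,y)$. The idea behind it is to build infinite concatenations of shrinking loops at $y$, which first countability allows. If $\psi$ were nontrivial on arbitrarily small loops, these products, together with countability of $G$, would produce an infinitely divisible nonidentity element of $G$. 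I will take this local statement as the input.

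The first step is to make this statement work at every point of $X$, not just at $x_0$. For $x\in X$ and a path $\alpha$ from $x_0$ to $x$, define $\phi_\alpha\colon\pi_1(X,x)\to G$ by $\phi_\alpha([\beta])=\phi([\alpha*\beta*\overline\alpha])$. Applying the local statement to $\phi_\alpha$ gives a neighborhood $U_x$ of $x$ with $\phi_\alpha$ trivial on loops in $U_x$. Since $G$ is abelian, the choice of $\alpha$ does not matter. Indeed, for another path $\alpha'$ from $x_0$ to $x$,
\[
[\alpha'*\beta*\overline{\alpha'}]=[\alpha'*\overline\alpha]\,[\alpha*\beta*\overline\alpha]\,[\alpha'*\overline\alpha]^{-1},
\]
and $\phi$ sends conjugates to the same element of the abelian group $G$. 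Hence $\phi$ vanishes on every generator $[\alpha*\beta*\overline\alpha]$ with $\beta$ a loop in $U_x$ and $\alpha$ arbitrary. Shrinking each $U_x$ to its path component containing $x$, which is open since the spaces in question are locally path-connected, gives an open cover $\U=\{U_x\}$ with $\pi_1(X,x_0;\U)\le\ker\phi$.

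The final step is Spanier's theorem: in a connected, locally path-connected space, every subgroup containing a Spanier group $\pi_1(X,x_0;\U)$ of an open cover is a covering subgroup. Applied here, it shows that $\ker\phi$ is a covering subgroup. This step needs local path-connectedness, which is implicit whenever covering subgroups are discussed and holds in our application since $X$ is a Peano continuum.

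The main obstacle is matching the exact form of Theorem 4.4 in \cite{cc3}. If it only says that the image of small loops is finite, or that some neighborhood has trivial image under the original $\phi$ for loops based at $y$, I would first re-derive triviality directly. Suppose that for a neighborhood basis $U_1\supset U_2\supset\cdots$ of $y$ there are loops $\beta_n\subset U_n$ with $\phi_\alpha([\beta_n])\neq 0$. For suitable integer exponents $k_n$, chosen inductively, the infinite concatenations
\[
\gamma_m=\beta_m^{k_m}*\beta_{m+1}^{k_{m+1}}*\cdots
\]
are continuous because the loops shrink. They satisfy $\phi_\alpha([\gamma_m])=k_m\phi_\alpha([\beta_m])+\phi_\alpha([\gamma_{m+1}])$, and an inductive choice of the $k_n$ against the countably many elements of $G$ should force $\phi_\alpha([\gamma_1])$ to be divisible by infinitely many integers. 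This contradicts the hypothesis on $G$.
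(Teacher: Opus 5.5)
Your argument is correct and is essentially the paper's. You apply Theorem 4.4 of \cite{cc3} at each $x\in X$ to $\phi$ composed with a change-of-basepoint isomorphism, use commutativity of $G$ to see that the resulting neighborhood works for every connecting path, and then conclude that $\ker\phi$ is a covering subgroup; you do this last step via Spanier's criterion (hence your local path-connectedness assumption), where the paper cites Theorem 7.8 of \cite{cc3}.

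Your contingency paragraph is unnecessary, since the paper reads Theorem 4.4 exactly as you do, and as written it would not work. In $\phi_\alpha([\gamma_m])=k_m\phi_\alpha([\beta_m])+\phi_\alpha([\gamma_{m+1}])$ the tail $\gamma_{m+1}$ carries no exponent, so this relation forces no divisibility of $\phi_\alpha([\gamma_1])$. You would need tails raised to powers, as in Lemma \ref{simple equations}.
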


\begin{proof}
  Let $G$ be a countable abelian group with no infinitely divisible elements and  $\phi: \pi_1(X,x_0) \to G$ a homomorphism.  Fix $x\in X$ and choose a path $\alpha:[0,1]\to X$ from $x$ to $x_0$ and let $\hat\alpha: \pi_1(X,x)\to \pi_1(X,x_0)$ be the canonical change of basepoint isomorphism.  By applying Theorem 4.4 of \cite{cc3} to the homomorphism $\phi\circ\hat\alpha: \pi_1(X,x) \to G$, there exists a neighborhood $U$ of $x$ such that $\phi\circ\hat \alpha \circ i_*\big(\pi_1(U,x)\big)$ is trivial, where $i_*$ is the homomorphism induced by the inclusion map $i: U \to X$.  Note that this is independent of the choice of $\alpha$ in the sense that $\phi\circ\hat \beta \circ i_*\big(\pi_1(U,x)\big)$ will also be trivial for any other path $\beta:[0,1]\to X$ from $x$ to $x_0$.  Since $x$ was arbitrary,  Theorem 7.8 of \cite{cc3} implies that the kernel of $\phi$ is a covering subgroup.
\end{proof}


\section{Main Construction}\label{Sec: Main Construction}
In Section \ref{binarysequences} and \ref{construction subsection}, we construct a path that runs through a path $\gamma$ and, for each dyadic rational $t$, the path will traverse a closed loop based at $\gamma(t)$ a given number of times.  Varying the parameters, a plethora of paths can be produced which will help in Section \ref{cotorsion} to prove the cotorsion statement of Theorem \ref{thm: higman complete}.

\subsection{Binary sequences}\label{binarysequences} Let $\calS$ be the set of finite binary sequences with last term $1$.  There is a natural bijection from $\calS$ to the dyadic rationals in $(0,1)$ given by $(c_1, c_2, \cdots, c_{k-1}, c_k) \mapsto   \sum\limits_{i=1}^k \frac{c_i}{2^{i}}$.  (Note $c_k$ is always 1.)  When convenient, we will identify $(c_1, c_2, \cdots, c_{k-1}, c_k)$ with the dyadic rational $\sum\limits_{i=1}^k \frac{c_i}{2^{i}}$.  If $\bfs=(c_1, \cdots, c_{k-1},1)$ and $i\in\{0,1\}$, we will let $(\bfs,i) = (c_1, \cdots, c_{k-1}, i, 1)\in \mathcal S$.  We will say the length of $\bfs\in\calS$ is $k$ if $\bfs$ is a $k$-tuple and denote the length of $\bfs$ by $|\bfs|$. Thus the distance between $(\bfs,i)$ and $\bfs$ (as dyadic rationals) is $1/2^{|\bfs|+1}$.  If $\bfs = (c_1, c_2, \cdots, c_{k-1}, 1)$ and $ \bfs'= (c_1, c_2, \cdots, c_{i-1}, 1)$, for some $i<k$, we will say that $\bfs'$ \emph{precedes}  $\bfs$. For the sake of convenience, $(1)$ precedes all other elements of $\calS$.  Note this is not the standard linear order but the partial order determined by the dyadic branching tree with root $(1)$, see Figure \ref{fig: precedes}.  

\begin{figure}[h]
    \centering
    {\tiny\def\svgwidth{\textwidth}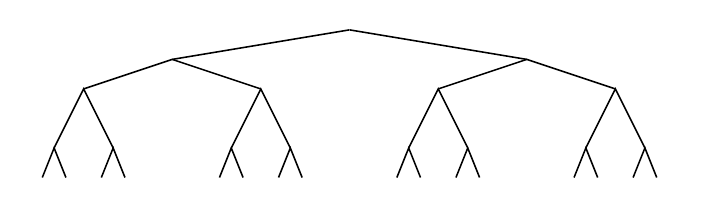}
    \caption{The partial order on $\calS$ is given by the partial ordering on this binary tree.}
    \label{fig: precedes}
\end{figure}

\subsection{Constructing nested open sets}\label{construction subsection}

Fix a sequence of natural numbers  $(n_i)$.

Let $B_{(1)}=(0,1)$ and $D_{(1)} = \left\{\frac{1}{n_1}, \ldots, \frac{n_1 -1}{n_1}\right\}$.  Let $\psi_{(1)}: B_{(1)}\backslash D_{(1)} \to (0,1)$ be the unique orientation preserving map which takes each component of $B_{(1)}\backslash D_{(1)}$ linearly onto (0,1).  Let $A_{(1)} = \psi_{(1)}^{-1}\bigl((\frac37, \frac47)\bigr)$ and $P^t_{(1)} = \psi_{(1)}^{-1}\bigl((\frac{2t}7, \frac{2t+1}7)\bigr)$ for $t\in\{0,1,2,3\}$. Each of the sets $A_{(1)}$, $P^t_{(1)}$ has exactly $n_1$ components of equal length.

For clarity, we will explicitly do the first inductive step before doing the generic situation.
For $i=0,1$, let $B_{(i,1)} = \psi_{(1)}^{-1}\bigl((\frac{4i+1}7, \frac{4i+2}7)\bigr)$.  Notice that each $B_{(i,1)}$ has exactly $n_1$ components of equal length.  Let $D_{(i,1)}$ consist of $(n_2 -1)$ distinct points from each component of $B_{(i,1)}$, which we will assume are evenly spaced in each component of $B_{(i,1)}$.  Let $\psi_{(i,1)}: B_{(i,1)} \backslash D_{(i,1)} \to (0,1)$ be the unique orientation preserving map which takes each component of $ B_{(i,1)} \backslash D_{(i,1)}$ linearly onto (0,1).  Let $A_{(i,1)} = \psi_{(i,1)}^{-1}\bigl((\frac37, \frac47)\bigr)$ and $P^t_{(i,1)} = \psi_{(i,1)}^{-1}\bigl((\frac{2t}7, \frac{2t+1}7)\bigr)$ for $t\in\{0,1,2,3\}$.   Each of these sets has exactly $n_1\cdot n_2$ components of equal length.
Notice that \[[0,1] =  \overline {B_{(1)}}=\overline{ P_{(1)}^0}\bigsqcup B_{(0,1)}\bigsqcup \overline{P_{(1)}^1}\bigsqcup A_{(1)}\bigsqcup \overline{P_{(1)}^2}\bigsqcup B_{(1,1)}\bigsqcup \overline{P_{(1)}^3}.\]

For any $E\in\bigr\{P_{(1)}^t,B_{(0,1)}, B_{(1,1)}, A_{(1)}\mid t\in\{0,1,2,3\}\bigl\}$, the map $\psi_{(1)}$ takes each component of $E$ linearly onto the same subinterval of $(0,1)$.  Thus, for any $E\in\bigr\{P_{(1)}^t,B_{(0,1)}, B_{(1,1)}, A_{(1)}\mid t\in\{0,1,2,3\}\bigl\}$, the map $\bigl(\psi_{(1)}|_{C'}\bigr)^{-1}\circ \psi_{(1)}|_{C} $ is a homeomorphism from $C$ to $C'$ for any two components $C, C'$ of $E$.

We will now do the generic inductive step of our construction. Suppose that by induction, we have defined $B_\bfs, D_\bfs, \psi_\bfs: B_\bfs\backslash D_\bfs \to (0,1)$, $P^t_{\bfs}$, and $A_\bfs$ for all $\bfs\in\calS$ with $|\bfs|\leq k$ and $t\in\{0,1,2, 3\}$ such that

\begin{enumerate}
  \item\label{nested} $B_{\bfs}\subset B_\bft$, whenever $\bft$ precedes $\bfs$,
  \item\label{componets} $B_\bfs$ has exactly $n_1\cdot n_2\cdot \ldots \cdot n_{|\bfs|-1}$ components (for $|\bfs|>1$),
  \item\label{D_s} $D_\bfs$ consists of $n_{|\bfs|}-1$ distinct points from each component of $B_\bfs$, 
  \item\label{linear} $\psi_{\bfs}: B_{\bfs} \backslash D_{\bfs} \to (0,1)$ is the unique orientation preserving map which takes each component of $ B_{\bfs} \backslash D_{\bfs}$ linearly onto (0,1), and
 
  \item\label{subintervals} for $|\bfs|<k$, $\overline {B_{\bfs}} = \overline{ P_{\bfs}^0}\bigsqcup B_{(\bfs,0)}\bigsqcup \overline{P_{\bfs}^1}\bigsqcup A_{\bfs}\bigsqcup \overline{P_{\bfs}^2}\bigsqcup B_{(\bfs,1)}\bigsqcup \overline{P_{\bfs}^3}$.

\end{enumerate}

We will now define the required sets and functions for binary sequences of length $k+1$.  Let $\bfs$ be a binary sequence of length $k$. For $i=0,1$, let $B_{(\bfs,i)} = \psi_{\bfs} ^{-1}\bigl((\frac{4i+1}7, \frac{4i+2}7)\bigr)$, which has exactly $n_1\cdot n_2\cdot \ldots\cdot n_{k}$ components.   Let $D_{(\bfs,i)}$ consist of $(n_{k+1} -1)$ distinct points from each component of $B_{(\bfs,i)}$, which we will assume are evenly spaced in each component of $B_{(\bfs,1)}$.  Then $B_{(\bfs,i)} \backslash D_{(\bfs,i)}$ has exactly $n_1\cdot n_2\cdot \ldots \cdot n_{k+1}$ components.  Let $\psi_{(\bfs,i)}: B_{(\bfs,i)} \backslash D_{(\bfs,i)} \to (0,1)$ be the unique orientation preserving map which takes each component of $ B_{(\bfs,i)} \backslash D_{(\bfs,i)}$ linearly onto (0,1).  Let $A_{(\bfs,i)} = \psi_{(\bfs,i)}^{-1}\bigl((\frac37, \frac47)\bigr)$ and $P^t_{(\bfs, i)} = \psi_{(\bfs,i)}^{-1}\bigl((\frac{2t}7, \frac{2t+1}7)\bigr)$ for $t\in\{0,1,2,3\}$. 
It is immediate that the new sets and functions satisfy conditions (\ref{nested})-(\ref{subintervals}).

\subsection{Constructing the path}\label{construction}

Let $X$ be a topological space with a fixed basepoint $x_0$.  Throughout Section \ref{construction}, we will fix 
\begin{enumerate}[(i)]
    \item\label{i} a sequence $(n_i)$  of nonzero integers,
    \item a nullhomotopic closed curve $\gamma:[0,1]\to X$ based at $x_0$, and
    \item\label{iii} loops $\alpha_\bfs:[0,1]\to X$ where $\alpha_\bfs$ is based at $\gamma ( \bfs )$ such that the diameters of $\im(\alpha_\bfs)$ converge to 0 as $|\bfs|$ diverges.
\end{enumerate}

With data (\ref{i}) - (\ref{iii}), we will construct a sequence of paths $\beta_i$ such that 
\begin{equation}\label{equation: beta_i}
    \beta_i =_C\left(\stackrel[{|\bfs|= j}]{}{\ast}(\eta_{\bfs}*\alpha_{\bfs}*\overline{\eta_{\bfs}}) \right)^{n_j}*(\beta_{i+1})^{n_j},
\end{equation} for some paths $\eta_{\bfs}$ from $x_0$ to $\gamma(\bfs)$. 

\begin{rmk}\label{rem: independent of base paths}
    Since we are only considering the relation $=_C$, this is independent of our choice of paths $\eta_\bfs$, in the sense, if $\beta_i$ satisfies Equation \ref{equation: beta_i} for some choice of paths $\eta_{\bfs}$, then  $\beta_i =_C\left(\stackrel[{|\bfs|= j}]{}{\ast}(m_{\bfs}*\alpha_{\bfs}*\overline{m_{\bfs}}) \right)^{n_j}*(\beta_{j+1})^{n_j}$ for any choice of paths $m_\bfs$ from $\gamma (0)$ to $\gamma(\bfs)$.
\end{rmk} 

To simplify our notation, we will assume that $(n_i)$ is a sequence of natural numbers.  To do the generic case, simply replace $\alpha_\bfs$ with $\overline\alpha_\bfs$ whenever $n_{|\bfs|}$ is negative.

Let $\beta_{1,1}(t) = \begin{cases}
   \alpha_{(1)}\bigl(7\psi_{(1)}(t) - 3\bigr) , & \mbox{if }  t\in A_{(1)} \\
   \gamma(\bfs), & \mbox{if } t\in B_\bfs \mbox{ and } |\bfs|=2\\
   \gamma \left(\frac{7\psi_{(1)}(t) - i}{4}\right), & \mbox{if } t\in \overline{P_{(1)}^i}.
 \end{cases}$

In other words, $\beta_{1,1}$ is a parametrization of $\gamma|_{\left[\frac{i}{4}, \frac{i+1}{4}\right]}$ on each component of $P_{(1)}^i $, a parametrization of $\alpha_{(1)}$ on each component of $A_{(1)}$, and the constant loop at $\gamma(\bfs)$ on each component of $B_{(1,0)}$ and $B_{(1,1)}$.  In particular, \[
  \beta_{1,1} =_p  \left(\gamma|_{[0,\frac14]}*\gamma(0,1)*\gamma|_{[\frac14,\frac12]} *\alpha_{(1)}* \gamma|_{[\frac12,\frac34]}*\gamma(1,1)*\gamma|_{[\frac34,1]}\right)^{n_1}. \]
(Recall we are identifying $(c_1, c_{2}, \cdots, c_k)$ with dyadic rational $\sum\limits_{i=1}^k \frac{c_i}{2^{i}}$.  So $\gamma(0,1) = \gamma(\frac14)$ and $\gamma (1,1) = \gamma (\frac34)$.  In this setting, we are using $\gamma (\bfs)$ to denote the constant path at $\gamma(\bfs)$.) Since $\gamma$ is nullhomotopic, this means that
\[
  \beta_{1,1} =_C  (\eta_{(1)}*\alpha_{(1)}*\overline{\eta_{(1)}})^{n_1}\]
where $\eta_\bfs$ is any path from $\gamma(0)$ to $\gamma(\bfs)$.

Let $\delta_{(\bfs,0)}:[0,1]\to X$ be a reparametrization of $\gamma|_{[(\bfs,0), \bfs]}$ and  $\delta_{(\bfs,1)}:[0,1]\to X$ be a reparametrization of $\gamma|_{[\bfs, (\bfs,1)]}$.

Then we will inductively define $\beta_{1,k}:[0,1]\to X$ by  \[\beta_{1,k}(t) = \begin{cases}
   \beta_{1,k-1}(t) , & \mbox{if }  t\not\in B_\bfs \mbox{ and } |\bfs|=k \\
   \alpha_{\bfs}\bigl(7\psi_{\bfs}(t) - 3\bigr) , & \mbox{if }  t\in A_{\bfs},  \mbox{ and } |\bfs|=k\\
   \gamma(\bfs), & \mbox{if } t\in B_\bfs \mbox{ and } |\bfs|=k+1\\
   \delta_{(\bfs,i)} \bigl(4i -7\psi_\bfs(t)\bigr), & \mbox{if } t\in \overline{P_{\bfs}^{2i}}  \mbox{ and } |\bfs|=k\\
   \delta_{(\bfs,i)} \bigl(4i+3 -7\psi_\bfs(t)\bigr), & \mbox{if } t\in \overline{P_{\bfs}^{2i+1}}  \mbox{ and } |\bfs|=k.
 \end{cases}\]

In other words, $\beta_{1,k}$ agrees with $\beta_{1, k-1}$ on $[0,1]\backslash \bigcup\limits_{|\bfs| = k} B_\bfs$ and is  a parametrization of $\alpha_{\bfs}$ on each component of $A_{\bfs}$ for $|\bfs|=k$, the constant loop at $\gamma(\bfs)$ on each component of $B_\bfs$ for $|\bfs|=k+1$, a parametrization of $\delta_{(\bfs,i)}$ on each component of $P_{\bfs}^{2i} $, and a parametrization of $\overline{\delta_{(\bfs,i)}}$ on each component of $P_{\bfs}^{2i+1} $.

For example, \[
  \beta_{1,2} =_p  \left(\gamma|_{[0,\frac14]}*\vartheta_1 *\gamma|_{[\frac14,\frac12]} *\alpha_{(1)}* \gamma|_{[\frac12,\frac34]}*\vartheta_2*\gamma|_{[\frac34,1]}\right)^{n_1}. \]
where
  \begin{align*}
    \vartheta_1= & \left(\delta_{(0,0,1)}*\gamma(0,0,1)*\overline\delta_{(0,0,1)} *\alpha_{(0,1)}* \delta_{(0,1,1)}*\gamma(0,1,1)*\overline\delta_{(0,1,1)}\right)^{n_2} \\
    \vartheta_2= & \left(\delta_{(1,0,1)}*\gamma(1,0,1)*\overline\delta_{(1,0,1)} *\alpha_{(1,1)}* \delta_{(1,1,1)}*\gamma(1,1,1)*\overline\delta_{(1,1,1)}\right)^{n_2}.
  \end{align*}

Again, this means that
\[
  \beta_{1,2} =_C  (\eta_{(1)}*\alpha_{(1)}*\overline{\eta_{(1)}})^{n_1}*(\eta_{(0,1)}*\alpha_{(0,1)}*\overline{\eta_{(0,1)}})^{n_1\cdot n_2}*(\eta_{(1,1)}*\alpha_{(1,1)}*\overline{\eta_{(1,1)}})^{n_1\cdot n_2} \]
where $\eta_\bfs$ is any path from $\gamma(0)$ to $\gamma(\bfs)$.

The map $\beta_{1,k}$ traverses the same path on each component of $B_\bfs$, for $\bfs$ with $|\bfs|= k$ (since on each part of each component of $B_\bfs$ the map $\beta_{1,k}$ factors through the maps $\psi_\bfs$ or is constant).  Thus, by induction, we can see that $\beta_{1,k}$ traverses the same path on each component of $B_\bfs$, whenever $|\bfs|\leq k$.  Thus
\[
  \beta_{1,k} =_C  \left(\stackrel[{|\bfs|\leq k}]{}{\ast}(\eta_{\bfs}*\alpha_{\bfs}*\overline{\eta_{\bfs}})^{n_1\cdot \ldots \cdot n_{|\bfs|}} \right).\]

Since diameters of $\alpha_\bfs$ and $\delta_{(\bfs, i)}$ both converge to $0$ as $|\bfs|$ diverges, $\beta_{1,k}$ converges uniformly to a path $\beta_1$. Since $\beta_{1,k}$ traverses the same path on each component of $B_\bfs$ whenever $|\bfs|\leq k$, the limit $\beta_1$ traverses the same path on each component of $B_\bfs$ for all $\bfs$.

For each $\bfs$, fix a component $C_\bfs$ of $B_\bfs$.  For any $k$, we have that
\[
  \beta_1 =_C  \left(\stackrel[{|\bfs|\leq k}]{}{\ast}(\eta_{\bfs}*\alpha_{\bfs}*\overline{\eta_{\bfs}})^{n_1\cdot \ldots \cdot n_{|\bfs|}} \right)*\left(\stackrel[{|\bfs|= k+1}]{}{\ast} (\eta_{\bfs}*\beta_1|_{C_\bfs}*\overline{\eta_{\bfs}})^{n_1\cdot \ldots \cdot n_{k}} \right). \]

\begin{equation}\label{equation: >= 2}
    \text{For } j\geq 2, \text{ let }\beta_j(t) = \begin{cases}
    \gamma(\bfs), & \mbox{if } t\in B_\bfs \backslash C_\bfs \mbox{ and } |\bfs|=j\\
    \gamma(\bfs), & \mbox{if } t\in A_\bfs  \mbox{ and } |\bfs|<j\\
    \beta_1(t) , & \mbox{otherwise}.
  \end{cases}
\end{equation}

Thus, we have that
\begin{align*}
  \beta_j =_C & \stackrel[{|\bfs|= j}]{}{\ast} (\eta_{\bfs}*\beta_1|_{C_\bfs}*\overline{\eta_{\bfs}}) \\
  =_C &  \left(\stackrel[{|\bfs|= j}]{}{\ast}(\eta_{\bfs}*\alpha_{\bfs}*\overline{\eta_{\bfs}})^{n_j} \right)*\left(\stackrel[{|\bfs|= j+1}]{}{\ast} (\eta_{\bfs}*\beta_1|_{C_\bfs}*\overline{\eta_{\bfs}})^{n_j} \right)\\
  =_C &  \left(\stackrel[{|\bfs|= j}]{}{\ast}(\eta_{\bfs}*\alpha_{\bfs}*\overline{\eta_{\bfs}}) \right)^{n_j}*(\beta_{j+1})^{n_j}
\end{align*}

 Notice that by construction, $[\beta_j]$ converges to the identity element of $\pi_1\bigl(X,\gamma(0)\bigr)$ under the shape topology.

\subsection{Some properties of the paths $\beta_j$}\label{properties of construction}

\begin{lem}\label{simple equations}   
    Let $(n_i)$ be a sequence of nonzero integers.  Suppose that $\alpha_i:[0,1]\to X$ is a null sequence of loops in $X$ based at $x_1$. Then there exists a null sequence of loops $\beta_i$ at $x_1$ such that
\[\beta_i =_C(\alpha_i)^{n_i}*(\beta_{i+1})^{n_i}=_C(\alpha_i*\beta_{i+1})^{n_i}.\]
\end{lem}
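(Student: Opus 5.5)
We will obtain Lemma \ref{simple equations} as a special case of the construction of Section \ref{construction}. The idea is to use a constant $\gamma$, so that all the dyadic basepoints coincide with $x_1$, and to place the loop $\alpha_i$ at every dyadic rational of length $i$. The main subtlety is that the construction puts $2^{j-1}$ copies of a loop at level $j$, whereas the lemma has only one loop $\alpha_i$ at level $i$. We will handle this by making all but one of the level-$j$ loops trivial.

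\textbf{Setup.} Take $\gamma$ to be the constant loop at $x_1$. It is nullhomotopic, and $\gamma(\bfs)=x_1$ for every $\bfs\in\calS$. For each $j$, let $\bfs_j=(0,\dots,0,1)$ be the sequence of length $j$. Set $\alpha_{\bfs_j}=\alpha_j$, and let $\alpha_\bfs$ be the constant loop at $x_1$ for every other $\bfs$. The diameters of $\im(\alpha_\bfs)$ tend to $0$ as $|\bfs|\to\infty$ because $(\alpha_i)$ is a null sequence. Hence data (\ref{i})--(\ref{iii}) are satisfied with the given sequence $(n_i)$. For negative $n_j$ we use the convention already indicated there: replace $\alpha_\bfs$ by $\overline{\alpha_\bfs}$.

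\textbf{Extracting the relation.} The construction yields loops $\beta_j$ based at $x_1$ satisfying Equation (\ref{equation: beta_i}). We may take every $\eta_\bfs$ to be the constant path, which Remark \ref{rem: independent of base paths} permits. All factors $\eta_\bfs*\alpha_\bfs*\overline{\eta_\bfs}$ with $\bfs\ne\bfs_j$ are then nullhomotopic and can be dropped. Only the factor for $\bfs_j$ survives, so
\[\beta_j=_C(\alpha_j)^{n_j}*(\beta_{j+1})^{n_j}.\]
The relation $=_C$ means equality in $\pi_1/[\pi_1,\pi_1]$, which is abelian. Therefore $(\alpha_j)^{n_j}*(\beta_{j+1})^{n_j}=_C(\alpha_j*\beta_{j+1})^{n_j}$, since $a^nb^n\equiv (ab)^n$ modulo the commutator subgroup. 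This gives the second equality of the lemma.

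\textbf{Null sequence.} It remains to show that $(\beta_j)$ is a null sequence of loops, meaning $\diam\im(\beta_j)\to0$; this is the step needing care. By Equation (\ref{equation: >= 2}), $\beta_j$ is constant except where it agrees with $\beta_1$. Outside $\bigcup_{|\bfs|=j}B_\bfs$ the loop $\beta_1$ only traverses $\alpha_\bfs$ with $|\bfs|<j$, and $\beta_j$ replaces these by constants. On each $B_\bfs$ with $|\bfs|=j$ other than the chosen component $C_\bfs$, $\beta_j$ is also constant. So the image of $\beta_j$ lies in
\[\{x_1\}\cup\bigcup_{|\bfs|\ge j}\im(\alpha_\bfs),\]
because the $\delta$-pieces are constant when $\gamma$ is constant. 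This set equals $\{x_1\}\cup\bigcup_{k\ge j}\im(\alpha_k)$. Every one of these images contains $x_1$, so its diameter is at most $2\sup_{k\ge j}\diam\im(\alpha_k)$, which tends to $0$.

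If one prefers to avoid the separate null-sequence check, the same relation also follows directly from the remark at the end of Section \ref{construction} that $[\beta_j]\to 1$ in the shape topology. However, the diameter estimate above is what the statement actually requires.
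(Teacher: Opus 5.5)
Your proposal is correct and is essentially the paper's own proof: constant $\gamma$, one nontrivial loop per level (you put $\alpha_j$ on the branch $(0,\dots,0,1)$ where the paper uses $(1,\dots,1)$, which is immaterial), constant $\eta_\bfs$ via Remark~\ref{rem: independent of base paths}, and the bound $\diam\im(\beta_j)\le 2\sup_{k\ge j}\diam\im(\alpha_k)$ read off from Equation~(\ref{equation: >= 2}), which you justify in more detail than the paper does. One caveat, which you share with the paper: for $n_j<0$, simply replacing $\alpha_\bfs$ by $\overline{\alpha_\bfs}$ and using $|n_j|$ copies actually gives $\beta_j=_C(\alpha_j)^{n_j}*(\beta_{j+1})^{|n_j|}$; to get exponent $n_j$ on $\beta_{j+1}$, reverse $\alpha_j$ exactly when $n_1\cdots n_j<0$ and replace the constructed $\beta_j$ by its reverse exactly when $n_1\cdots n_{j-1}<0$, which leaves the diameter estimate unchanged.
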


\begin{proof}
Let $(n_i)$ and $(\alpha_i)$ be as in the statement of the lemma.  Let $\gamma$ be the constant path at $x_1$.   If $\bfs =(1, \cdots, 1)$ is the constant sequence of length $i$, then let $\alpha_\bfs = \alpha_i$.  If any coordinate of $\bfs\in\calS$ is $0$, then let $\alpha_\bfs$ be the constant path at $x_1$.  We can then use the construction in Section \ref{construction} to build a sequence of paths.

 Then $\beta_i =_C\Bigl(\stackrel[{|\bfs|= i}]{}{\ast}(\eta_{\bfs}*\alpha_{\bfs}*\overline{\eta_{\bfs}}) \Bigr)^{n_i}*(\beta_{i+1})^{n_i}$, for any choice of paths $\eta_{\bfs}$ for $x_1$ to $\gamma(\bfs)$.   

 As noted in Remark \ref{rem: independent of base paths}, 
 Since we are only considering this  up to the equivalence relation $=_C$, we may choose $\eta_\bfs$ to be the constant path at $x_1$.  Then \[\beta_i =_C   \Bigl(\stackrel[{|\bfs|= i}]{}{\ast}(\alpha_{\bfs}) \Bigr)^{n_i}*(\beta_{i+1})^{n_i}  =_C  (\alpha_{i})^{n_i}*(\beta_{i+1})^{n_i}.\]

Notice that by the construction,  $\beta_1$ traverses $\alpha_\bfs= \alpha_{|\bfs|}$ on each component of $A_\bfs$ and traverses some part of $\gamma$ on all other intervals.  Since $\gamma$ is the constant path at $x_1$, $\diam \big(\im(\beta_1)\big) \leq 2\max\limits_{i\in\mathbb N} \big\{ \diam \big(\im(\alpha_i)\big)\big\}$.  

By Equation \ref{equation: >= 2}, $\diam \big(\im(\beta_j)\big) \leq 2\max\limits_{j\in\mathbb N_{\geq j}} \big\{ \diam \big(\im(\alpha_i)\big)\big\}$, where $\mathbb N_{\geq j} =  \{j, j+1, j+2, \cdots\}$.  Since $(\alpha_i)$ is a null sequence of loops, so is $(\beta_i)$.

\end{proof}

\begin{lem}\label{higman solutions}
Let $\gamma$ be a nullhomotopic curve at $x_0$ and $(n_i)$ be a sequence of nonzero integers.  Suppose that for every $i\in\mathbb N$ and every $\bfs\in\calS$ with $|\bfs|\leq i$, the path $\bfsi:[0,1]\to X$ is a closed curve at $\gamma(\bfs)$ with diameter at most $4l_i$.  If $l_i$ converges to 0, then there exists a sequence of closed loops $\beta_i$ at $x_0$ such that
\[\beta_i =_C\left(\stackrel[{|\bfs|\leq i}]{}{\ast}(\eta_{\bfs}*\bfsi*\overline{\eta_{\bfs}}) \right)^{n_i}*(\beta_{i+1})^{n_i}.\]
where $\eta_\bfs$ is any path from $\gamma(0)$ to $\gamma(\bfs)$.

In addition, $[\beta_i]$ converges to the $[x_0]$ in $\pi_1(X,x_0)$ with the shape topology.
\end{lem}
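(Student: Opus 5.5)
The plan is to reduce the statement to the construction of Section \ref{construction}, fed with suitably chosen loops $\alpha_\bfs$. The difference from Equation \ref{equation: beta_i} is that at stage $i$ we now have loops ${}_ib_\bfs$ at \emph{every} $\bfs$ with $|\bfs|\le i$, not only at those with $|\bfs|=i$. Since the problem is only up to $=_C$, i.e. in the abelianization, the order of the factors is irrelevant.

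The strategy is to re-index so that each loop ${}_ib_\bfs$ with $|\bfs|<i$ is relocated to a point of the dyadic tree at depth $i$. Fix an injective assignment $\bfs\mapsto \bfs^{(i)}$ sending each $\bfs$ with $|\bfs|\le i$ to a descendant of $\bfs$ of length $N_i$, with $N_i$ strictly increasing; for instance $N_i=2i$, sending $\bfs$ to $\bfs$ padded by a code of length at most $2i$. Set $\alpha_{\bfs^{(i)}} = \tau*{}_ib_\bfs*\overline\tau$, where $\tau$ is a reparametrization of $\gamma$ restricted to the interval from $\bfs^{(i)}$ to $\bfs$. All other $\alpha_\bfs$ are constant.

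The key observation is that every point of the interval between $\bfs$ and a descendant $\bfs'$ lies within $2^{-|\bfs|}$ of $\bfs$. Hence, by uniform continuity of $\gamma$, the diameter of $\im(\tau)$ tends to $0$ as $|\bfs|\to\infty$. However, small $|\bfs|$ with large $i$ is a problem, so this choice does not yet give a null family.

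To fix this, I would instead choose a sequence $N_i$ growing so fast that for all $\bfs'$ of length at least $N_i$, the image of $\gamma$ on the interval of length $2^{-N_i}$ around $\bfs'$ has diameter less than $l_i$. I would then put the loop ${}_ib_\bfs$ at a depth-$N_i$ point $\bfs'$ whose dyadic value lies very close to $\bfs$, but \emph{not} conjugated by a long $\gamma$-segment. This is the main obstacle: an $\alpha$ at $\gamma(\bfs')$ must be a loop based there, while ${}_ib_\bfs$ is based at $\gamma(\bfs)$.

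The resolution I would try is to keep the construction's own geometry instead. In Section \ref{construction}, $\beta_{1,k}$ on a component of $B_{\bfs}$ is a small sub-path of $\gamma$ near $\bfs$ as $|\bfs|$ grows, so we can insert ${}_ib_\bfs$ directly into each component of $A_\bfs$ at stage $i$. Concretely, modify the construction so that on $B_{\bfs}$ with $|\bfs|=i$ the inserted loop is the product of ${}_ib_{\bfs''}$ for the ancestors $\bfs''$ of $\bfs$, each conjugated by the tree path, which is $\gamma$ restricted to intervals of length at most $2^{-|\bfs''|}$. The total mass is then controlled by $4l_i$ plus the oscillation of $\gamma$, and the bookkeeping must be arranged so that each ${}_ib_\bfs$ appears exactly once per repetition.

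Granting such a choice of null data $\alpha$, Equation \ref{equation: beta_i} gives
\[\beta_i=_C\Bigl(\ast_{|\bfs|=N_i}\eta\alpha_\bfs\overline\eta\Bigr)^{n_i}*\beta_{i+1}^{n_i},\]
and by Remark \ref{rem: independent of base paths} we may replace $\eta_{\bfs'}*\tau$ by $\eta_\bfs$. Products at intermediate depths that are not of the form $N_i$ are trivial, and since their exponents combine multiplicatively they must be absorbed by taking the $n$ there equal to $1$.

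Convergence of $[\beta_i]$ to the identity in the shape topology then follows as noted at the end of Section \ref{construction}. Here $\beta_j$ agrees with $\gamma$ except on sets where it traverses loops of diameter tending to $0$, so for each polyhedron map the image of $\beta_j$ is eventually homotopic to the image of $\gamma$, which is nullhomotopic; the conclusion then follows from Lemma \ref{lem:shape is shape}.
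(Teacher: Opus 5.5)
There is a genuine gap. Everything from ``Granting such a choice of null data $\alpha$'' onward depends on a family of loops that you never actually produce, and the one concrete resolution you offer does not produce it. In your ``keep the construction's own geometry'' paragraph, the loop ${}_ib_{\bfs''}$ of a shallow ancestor (e.g.\ $\bfs''=(1)$) is conjugated by $\gamma$ restricted to an interval of length up to $2^{-|\bfs''|}$. That interval does not shrink as $i\to\infty$. So the loops inserted at depth $i$ do not have diameters tending to $0$, which is exactly the defect you found in your first attempt, and the uniform convergence $\beta_{1,k}\to\beta_1$ that Section~\ref{construction} relies on is lost. There is a second problem: inserting ${}_ib_{\bfs''}$ into every $B_\bfs$ with $|\bfs|=i$ below $\bfs''$ gives $2^{i-|\bfs''|}$ copies per repetition, and you do not supply the bookkeeping that would reduce this to one copy.

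The repair is already in your ``To fix this'' paragraph. The ``obstacle'' you name there is not one, because conjugating by a \emph{short} $\gamma$-segment is harmless.
\begin{itemize}
\item Take $N_i>i$ strictly increasing and, for $|\bfs|\le i$, put $\bfs'=\bfs+2^{-N_i}$. Since $2^{N_i}\bfs$ is even, $\bfs'$ has length exactly $N_i$, and $\bfs\mapsto\bfs'$ is injective.
\item Let $\alpha_{\bfs'}=\tau*{}_ib_\bfs*\overline\tau$, where $\tau$ is $\gamma$ run from $\bfs'$ back to $\bfs$. Write $\omega(\delta)$ for the largest diameter of $\gamma$ on an interval of length at most $\delta$. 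Then $\diam(\im\alpha_{\bfs'})\le 4l_i+\omega(2^{-N_i})\to 0$ by uniform continuity.
\item Make all other $\alpha$ constant and run Section~\ref{construction} with exponent $n_i$ at level $N_i$ and exponent $1$ elsewhere. Set $\beta_i=\beta'_{N_i}$.
\item Then $\beta'_{N_i+1}=_C\beta'_{N_{i+1}}$, and Remark~\ref{rem: independent of base paths} lets you replace $\eta_{\bfs'}*\tau$ by $\eta_\bfs$. Also $[\beta_i]\to 1$ as a subsequence of $[\beta'_j]$.
\end{itemize}

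This repaired route differs from the paper's, which never relocates basepoints. For each fixed $\bfs$ the paper applies Lemma~\ref{simple equations} at $\gamma(\bfs)$, with exponent $1$ at $i=|\bfs|$. This gives a null sequence ${}_i\alpha_\bfs$ with ${}_i\alpha_\bfs=_C({}_ib_\bfs)^{n_i}*({}_{i+1}\alpha_\bfs)^{n_i}$ for $i>|\bfs|$. It then feeds $\alpha_\bfs={}_{|\bfs|}\alpha_\bfs$ into the construction and sets $\beta_i=\bigl(\ast_{|\bfs|<i}\eta_\bfs*{}_i\alpha_\bfs*\overline{\eta_\bfs}\bigr)*\beta'_i$, where the prefix carries the tails of the earlier-level loops. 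Your repaired version needs only one run of the construction plus uniform continuity of $\gamma$. The paper's version needs an auxiliary one-basepoint solution at each $\gamma(\bfs)$ but no relocation.
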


\begin{proof}

  Fix $\bfs\in\calS$ and let $k= |\bfs|$.  Consider the sequence of integers $(1, n_{k+1}, n_{k+2}, \cdots )$ and the sequence of loops ${}_{k} b_\bfs, {}_{k+1} b_\bfs, {}_{k+2} b_\bfs, \cdots$ all based at $\gamma(\bfs)$.  Then, applying Lemma \ref{simple equations} to this sequence of loops and sequence of integers, we obtain a null sequence of paths $\{{}_i\alpha_{\bfs}\}$, all based at $\gamma(\bfs)$, such that ${}_1\alpha_\bfs = _C  ({}_{k}b_\bfs) *({}_{2}\alpha_\bfs) $ and, for $i>1$, ${}_i\alpha_\bfs = _C  ({}_{k+i-1}b_\bfs)^{n_{k+i-1}} *({}_{i+1}\alpha_\bfs)^{n_{k+i-1} }$.  For notational simplicity, we will shift our index on the sequence of $({}_i\alpha_\bfs)$ to start at $k$, i.e.,  we have a sequence of paths $({}_i\alpha_\bfs)_{i\geq k}$ with ${}_k\alpha_\bfs = _C  ({}_{k}b_\bfs) *({}_{k+1}\alpha_\bfs) $ and, for $i>k$, ${}_i\alpha_\bfs = _C  ({}_ib_\bfs)^{n_{i}} *({}_{i+1}\alpha_\bfs)^{n_i}$ for all $i\geq k$.  Let $\alpha_\bfs ={}_{|\bfs|}\alpha_\bfs$.  Notice that ${}_i\alpha_\bfs$ has diameter at most $8l_{|\bfs|}$.

  We will use the construction in Section \ref{construction}, this time using the nullhomotopic curve $\gamma$, the loops $\alpha_\bfs$, and the sequence $(n_i)$.  This gives us a sequence of curves $\beta_i'$ such that
  $\beta_i' =_C \bigl(\stackrel[{|\bfs|= i}]{}{\ast}(\eta_{\bfs}*\alpha_{\bfs}*\overline{\eta_{\bfs}})^{n_{i}} \bigr)*(\beta_{i+1}')^{n_i}$.   Let $\beta_i =  \left(\stackrel[{|\bfs|< i}]{}{\ast}(\eta_{\bfs}*{}_{i}\alpha_\bfs*\overline{\eta_{\bfs}}) \right)*\beta_i' $.

  Then, for fixed $i$,
  \begin{align*}
    \beta_i =_C & \bigl(\stackrel[{|\bfs|< i}]{}{\ast}(\eta_{\bfs}*{}_{i}\alpha_\bfs*\overline{\eta_{\bfs}}) \bigr)*\beta_i' \\
    =_C &  \bigl(\stackrel[{|\bfs|< i}]{}{\ast}(\eta_{\bfs}*{}_{i}\alpha_\bfs*\overline{\eta_{\bfs}}) \bigr)* \bigl(\stackrel[{|\bfs|= i}]{}{\ast}(\eta_{\bfs}*\alpha_{\bfs}*\overline{\eta_{\bfs}}) \bigr)^{n_{i}}*(\beta_{i+1}')^{n_i} \\
    =_C & \bigl(\stackrel[{|\bfs|< i}]{}{\ast}(\eta_{\bfs}*{}_{i}\alpha_\bfs*\overline{\eta_{\bfs}}) \bigr)*\bigl(\stackrel[{|\bfs|= i}]{}{\ast}(\eta_{\bfs}*\bigl( ({}_ib_\bfs) *({}_{i+1}\alpha_\bfs)\bigr)*\overline{\eta_{\bfs}}) \bigr)^{n_{i}} * (\beta_{i+1}')^{n_i}\\
    =_C & \bigl(\stackrel[{|\bfs|< i}]{}{\ast}(\eta_{\bfs}*{}_{i}\alpha_\bfs*\overline{\eta_{\bfs}}) \bigr)* \bigl(\stackrel[{|\bfs|= i}]{}{\ast}(\eta_{\bfs}*({}_ib_\bfs)*\overline{\eta_{\bfs}}) \bigr)^{n_{i}}* \bigl(\stackrel[{|\bfs|= i}]{}{\ast}(\eta_{\bfs}*{}_{i+1}\alpha_\bfs*\overline{\eta_{\bfs}}) \bigr)^{n_i} * (\beta_{i+1}')^{n_i}\\
   =_C & \bigl(\stackrel[{|\bfs|< i}]{}{\ast}(\eta_{\bfs}*({}_ib_\bfs)^{n_{i}} *({}_{i+1}\alpha_\bfs)^{n_i}*\overline{\eta_{\bfs}}) \bigr)*\bigl(\stackrel[{|\bfs|= i}]{}{\ast}(\eta_{\bfs}*({}_ib_\bfs)*\overline{\eta_{\bfs}}) \bigr)^{n_{i}}\\
   &\hspace{2in}* \bigl(\stackrel[{|\bfs|= i}]{}{\ast}(\eta_{\bfs}*{}_{i+1}\alpha_\bfs*\overline{\eta_{\bfs}}) \bigr)^{n_i}* (\beta_{i+1}')^{n_i}\\
   =_C & \bigl(\stackrel[{|\bfs|\leq i}]{}{\ast}(\eta_{\bfs}*({}_ib_\bfs)*\overline{\eta_{\bfs}}) \bigr)^{n_{i}}* \bigl(\stackrel[{|\bfs|< i+1}]{}{\ast}(\eta_{\bfs}*{}_{i+1}\alpha_\bfs*\overline{\eta_{\bfs}}) \bigr)^{n_i}*(\beta_{i+1}')^{n_i}\\
   =_C &\bigl(\stackrel[{|\bfs|\leq i}]{}{\ast}(\eta_{\bfs}*({}_ib_\bfs)*\overline{\eta_{\bfs}}) \bigr)^{n_{i}}*(\beta_{i+1})^{n_i}.
  \end{align*}

  This shows that $\beta_i$ satisfies the desired equation.  Recall that each loop  ${}_i\alpha_\bfs$ has diameter at most $8l_{|\bfs|}$.  Thus $[\beta_i]$ converges to the identity element in $\pi_1(X,x_0)$.

  \end{proof}


\section{Path-connected pro-coverings }

\begin{defn}
  Let $G$ be a group and $(H_n)$ a descending sequence of subgroups of $G$, i.e., $H_n\supset H_{n+1}$ for every $n$.  We will say that $G$ is \emph{complete relative to $(H_n)$} if every descending sequence of cosets $(H_ng_n)$ has nonempty intersection.  We invite the interested reader to read Section 3.1 of \cite{ConnerHerfortKentPavesic2025} for justification of this terminology.
\end{defn}

\begin{thm}[Theorem 3.22 in \cite{ConnerHerfortKentPavesic2025}]\label{complete imlplies path-connected}
  Suppose that $\rho\colon (E, e_0)\to (X, x_0)$ is a pro-covering.  Then $E$ is path-connected if and only if $\pi_1(X,x_0)$ is complete relative to $(H_n)$ where $H_n = \rho_{n*}\bigl(\pi_1(E_n,e_n)\bigr)$.
\end{thm}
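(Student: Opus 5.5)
This is Theorem 3.22 of \cite{ConnerHerfortKentPavesic2025}, so it may be cited; here is how I would prove it directly. Throughout, $\sigma_n\colon E\to E_n$ are the projections and $H_n=\rho_{n*}\pi_1(E_n,e_n)$. Recall that for a covering $\rho_n$, the points of the fiber $\rho_n^{-1}(x_0)$ correspond bijectively to the right cosets $H_ng$: the coset $H_n[\ell]$ corresponds to the endpoint of the lift of the loop $\ell$ starting at $e_n$. The compatible systems of points in the fibers are exactly the descending coset sequences, because $\sigma_{n+1,n}$ sends the point labelled $H_{n+1}g$ to the point labelled $H_ng$. Hence
\[
\rho^{-1}(x_0)=\varprojlim_n \rho_n^{-1}(x_0)
\]
is in bijection with the descending sequences $(H_ng_n)$, with $e_0$ corresponding to $(H_n)$.

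\emph{Path-connected implies complete.} Given a descending sequence $(H_ng_n)$, take the corresponding point $e\in\rho^{-1}(x_0)$ and a path $\tilde\ell$ in $E$ from $e_0$ to $e$. Then $\ell=\rho\circ\tilde\ell$ is a loop at $x_0$. For each $n$, the path $\sigma_n\circ\tilde\ell$ is the lift of $\ell$ to $E_n$ starting at $e_n$, and it ends at $\sigma_n(e)$. Therefore $H_n[\ell]=H_ng_n$ for all $n$, so $[\ell]\in\bigcap_n H_ng_n$.

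\emph{Complete implies path-connected.} First, every point $e\in\rho^{-1}(x_0)$ is joined to $e_0$ by a path. Completeness gives $[\ell]\in\bigcap_n H_ng_n$, where $(H_ng_n)$ is the sequence corresponding to $e$. Let $\tilde\ell_n$ be the lift of $\ell$ to $E_n$ starting at $e_n$. By uniqueness of lifts, $\sigma_{n+1,n}\circ\tilde\ell_{n+1}=\tilde\ell_n$, so the $\tilde\ell_n$ assemble into a continuous path $\tilde\ell$ in $E=\varprojlim E_n$. Since $H_n[\ell]=H_ng_n$, each $\tilde\ell_n$ ends at $\sigma_n(e)$, so $\tilde\ell$ ends at $e$.

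For an arbitrary point $e'\in E$, set $x=\rho(e')$ and choose a path $\alpha$ in $X$ from $x$ to $x_0$, using that $X$ is path-connected. The lifts of $\alpha$ to each $E_n$ starting at $\sigma_n(e')$ are compatible, again by uniqueness of lifts, so they give a path in $E$ from $e'$ to a point of $\rho^{-1}(x_0)$. Combined with the previous paragraph, $e'$ is joined to $e_0$.

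\emph{Main obstacle.} The key step is the identification of $\rho^{-1}(x_0)$ with descending coset sequences, and in particular checking the right versus left coset convention. This convention is forced by the lifting criterion: two loops $\ell,\ell'$ lift to paths ending at the same point of $E_n$ exactly when $[\ell][\ell']^{-1}\in H_n$, i.e.\ $H_n[\ell]=H_n[\ell']$. Everything else follows from uniqueness of path lifting applied levelwise.
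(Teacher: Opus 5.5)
Your proof is correct and complete. The paper gives no argument of its own here; it imports the result as Theorem 3.22 of \cite{ConnerHerfortKentPavesic2025}. So your proposal works as a self-contained replacement for the citation, not as an alternative to a proof given in the paper.

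Your argument has two ingredients:
\begin{enumerate}
\item You identify $\rho^{-1}(x_0)$ with $\varprojlim_n H_n\backslash\pi_1(X,x_0)$, that is, with descending sequences of right cosets.
\item You use levelwise unique path lifting.
\end{enumerate}
Together these show that the fiber points lying in the path component of $e_0$ are exactly the image of $\pi_1(X,x_0)$ in this inverse limit. Completeness relative to $(H_n)$ is therefore precisely the statement that the whole fiber lies in one path component. Your last step, lifting a path from $\rho(e')$ to $x_0$ levelwise, then correctly reduces path-connectivity of all of $E$ to the fiber. Your right-coset convention matches the one the paper uses in Lemma~\ref{complete rel H_n}.

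You use implicitly that each $E_n$ is connected. This is needed for the coset labelling of $\rho_n^{-1}(x_0)$ to be surjective, and hence for every descending coset sequence to come from an actual point of $E$. It is built into the paper's definition of a covering sequence, since the maps $\sigma_{i,j}$ are produced by the lifting theorem, so this is not a gap; it is worth stating once.

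Your argument also uses only that $X$ is connected and locally path-connected. That is exactly the generality in which pro-coverings are defined in the paper, so you recover the statement in its full stated form without any Peano continuum hypothesis.
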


We will require the following lemma.

\begin{lem}\label{lem: right basepoints}
    Let $\gamma: [0,1]\to X$ be a nullhomotopic space filling curve and $\U$ an open cover of $X$.  Fix $i\in \mathbb N$ and let $\epsilon =  \mesh\bigl(\mathcal U\bigr) + \max\limits_{1\leq j\leq 2^i} \diam\bigl(\im \bigl(\gamma|_{[\frac{j-1}{2^i}, \frac{j}{2^i}]}\bigr) \bigr)$.  For every loop $\alpha: [0,1]\to X$ with  $[\alpha]\in \pi_1(X, x_0; \U)$,  there exists, for each $j\in\{0, 1, \cdots, 2^i-1\}$, a loop $b_{j}: [0,1]\to X$ based at $\gamma(j/2^i)$ with diameter at most $2\epsilon$, such that \[\alpha =_C b_{0}*\gamma|_{[0, 1/2^i]} *b_{1} * \gamma|_{[1/2^i, 2/2^i]} * b_{2} * \cdots * b_{2^i-1} *\gamma|_{[(2^{i} -1)/2^i, 1]}.\]
\end{lem}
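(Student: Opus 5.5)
Since $[\alpha]\in\pi_1(X,x_0;\U)$, we can write $\alpha \simeq \prod_{k=1}^m \mu_k*\lambda_k*\overline{\mu_k}$ (rel endpoints, up to inverses), where each $\lambda_k$ is a loop lying in some $U_k\in\U$ and $\mu_k$ is a path from $x_0$ to $\lambda_k(0)$. The relation $=_C$ only records the class in $H_1$, i.e. modulo the commutator subgroup. So it suffices to treat one generator $\mu*\lambda*\overline\mu$ at a time and then add up the contributions, basepoint by basepoint. The key observation is that modulo commutators, $\mu*\lambda*\overline\mu$ depends only on $\lambda$ and the free homotopy class data, and the conjugating path may be replaced by any other path from $x_0$ to $\lambda(0)$ (the same fact used in Remark \ref{rem: independent of base paths}). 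Indeed, if $\mu'$ is another such path, then $\mu*\lambda*\overline\mu$ and $\mu'*\lambda*\overline{\mu'}$ are conjugate by the loop $\mu'*\overline\mu$, so they agree in the abelianization.

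Fix a generator $\lambda$ lying in $U\in\U$. Since $\gamma$ is space filling, choose $s\in[0,1]$ with $\gamma(s)=\lambda(0)$, and let $j$ be such that $s\in[\frac{j-1}{2^i},\frac{j}{2^i}]$ (taking $j-1$ modulo $2^i$, so that the basepoint $\gamma(0)=\gamma(1)$ covers the endpoint case). Let $\tau$ be the path $\gamma|_{[(j-1)/2^i,\, s]}$, which has diameter at most $\max_j\diam\bigl(\im(\gamma|_{[\frac{j-1}{2^i},\frac{j}{2^i}]})\bigr)$. Set $b=\tau*\lambda*\overline\tau$, a loop based at $\gamma((j-1)/2^i)$. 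Every point of $b$ lies within the diameter of $\tau$ of a point of $\lambda$, and $\diam(\im\lambda)\le\mesh(\U)$, so $\diam(\im b)\le \mesh(\U)+2\diam(\im\tau)\le 2\epsilon$. By the replacement observation, $\mu*\lambda*\overline\mu=_C \eta*b*\overline\eta$ with $\eta=\gamma|_{[0,(j-1)/2^i]}$.

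Now assemble the pieces. For each vertex $j$, let $b_j$ be the concatenation of all the loops $b$ (with their signs) assigned to $\gamma(j/2^i)$, or the constant loop if there are none. The right-hand side of the claimed formula is, modulo $C$, equal to $\gamma$ (which is nullhomotopic) times the product over $j$ of $\eta_j*b_j*\overline{\eta_j}$, where $\eta_j=\gamma|_{[0,j/2^i]}$. Hence it is $=_C$-equivalent to the product of all the generators, that is, to $\alpha$.

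\textbf{Main obstacle: the diameter bound for $b_j$.} Concatenating many small loops at a single point keeps every point of the image within $\epsilon$ of the basepoint, because each constituent loop $\tau*\lambda*\overline\tau$ stays within $\diam(\tau)+\mesh(\U)\le\epsilon$ of $\gamma(j/2^i)$. Therefore the diameter of the concatenation is at most $2\epsilon$, which is exactly why the factor $2$ appears in the statement. The remaining bookkeeping is routine: handling the wraparound at $j=2^i$, checking that reparametrization does not matter, and noting that rearranging factors is legitimate because we work in $\pi_1/C$, which is abelian.
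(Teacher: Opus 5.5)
Your proposal is correct and follows essentially the same route as the paper: write $\alpha$ as a product of conjugates of $\U$-small loops, slide each small loop's basepoint back along $\gamma$ to the left dyadic endpoint of an interval containing a parameter that hits it, and concatenate the resulting loops at each $\gamma(j/2^i)$, getting the $2\epsilon$ bound because every constituent lies within $\epsilon$ of the common basepoint. You supply more detail than the paper on why the final identity holds modulo $C$, namely independence of the conjugating path and rewriting the right-hand side as a product of conjugates times the nullhomotopic $\gamma$.
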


\begin{proof}
    Since $[\alpha]\in \pi_1(X, x_0; \U)$, there exists loops $c_l: [0,1]\to X$ and paths $a_l:[0,1]\to X$ such that $\alpha$ is  homotopic to $ a_1 * c_1 * \overline a_1* \cdots * a_k * c_k * \overline a_k$ and  $c_l$ has diameter at most the mesh of $\mathcal U$.  The loop $c_l$ is based at some point $x_l$ of $X$ and there exists $t_l\in [0,1)$ such that $\gamma (t_l) = x_l$.  Then $ c_l' = \gamma|_{\bigl[\lfloor 2^i \cdot t_l\rfloor/2^i, t_l\bigr]} *c_l*\overline{\gamma|_{\bigl[\lfloor 2^i \cdot t_l\rfloor/2^i, t_l\bigr]}}$  has diameter at most $\epsilon$.  (Recall $\lfloor x\rfloor$ is greatest integer less than or equal to $x$.)  Let $b_j$ be the concatenation of all $c_l'$ such that $\lfloor 2^i \cdot t_l\rfloor = j$.  Then $b_j$ has diameter less than $2\epsilon$ and $\alpha =_C b_{0}*\gamma|_{[0, 1/2^i]} *b_{1} * \gamma|_{[1/2^i, 2/2^i]} * b_{2} * \cdots * b_{2^i-1} *\gamma|_{[(2^{i} -1)/2^i, 1]}$.
\end{proof}

\begin{lem}\label{complete rel H_n}
    Let $\{P_i, \sigma_{i,j}\}$ be a polyhedral expansion of $X$ with projection maps $\sigma_i : X \to P_i$.  If $H\leq \pi_1(X,x_0)$ contains the commutator subgroup, then $\pi_1(X,x_0)$ is complete relative to $(K_n)$, where $K_n = H\cdot \ker(\sigma_{n*})$.
\end{lem}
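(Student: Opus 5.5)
We must find, for any descending sequence of cosets $K_ng_n$ (so $K_{n+1}g_{n+1}\subset K_ng_n$), an element $g$ lying in every $K_ng_n$. Since $K_n\supset C$ is normal (it contains the commutator subgroup), work additively modulo $C$. Put $h_n = g_{n+1}g_n^{-1}\in K_n$. The target is an element $g$ with $g\in K_ng_n$ for all $n$. Formally this is $g = \cdots h_3h_2h_1g_1$, an infinite product, and the construction of Section \ref{Sec: Main Construction} is what will realize such a product. Concretely, it suffices to produce elements $x_n$ with $x_n \equiv h_nx_{n+1}$ modulo $K_n$ (modulo $C$, order does not matter), and then set $g = x_1g_1$. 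Then $g g_n^{-1}\equiv x_n \pmod{K_{n}}$ up to factors in $K_n$, and $x_n\in K_n$ will follow if $x_n \in K_n$ itself. Since $K_n$ is normal, being in $K_ng_n$ reduces to this membership.

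Next I decompose $h_n$. Write $h_n = a_nk_n$ with $a_n\in H$ and $k_n\in\ker(\sigma_{n*})$. The $H$ part is harmless, because it lies in every $K_m$. So I can replace $h_n$ by $k_n$, i.e. assume $h_n\in\ker(\sigma_{n*})$. By the argument in Lemma \ref{lem: factor}, $\ker(\sigma_{n*})\subset\pi_1(X,x_0;2\U_n)$ for open covers $\U_n$. After passing to a subsequence of the expansion, these covers can be taken with mesh tending to $0$. Passing to a subsequence of cosets is harmless, since the sets $K_ng_n$ are nested.

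Now fix a nullhomotopic space filling curve $\gamma$ based at $x_0$; this exists since $X$ is a Peano continuum and $\gamma*\overline\gamma$-type adjustments make it nullhomotopic. Choose $i_n\to\infty$ so that the pieces $\gamma|_{[(j-1)/2^{i_n},j/2^{i_n}]}$ have diameter at most the mesh of $\U_n$. Applying Lemma \ref{lem: right basepoints}, and using that $\gamma$ is nullhomotopic, gives $h_n =_C \ast_{j}\,(\eta_j * b^{(n)}_j*\overline{\eta_j})$ with loops $b^{(n)}_j$ at the dyadic points $\gamma(j/2^{i_n})$ of diameter $\le 4l_n$, where $l_n\to 0$. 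I reindex these as loops ${}_ib_\bfs$ at $\gamma(\bfs)$ with $|\bfs|\le i$, setting them trivial where unused, after a reindexing of $n$ to $i$ that matches dyadic depth. Then I apply Lemma \ref{higman solutions} with all $n_i=1$. It yields loops $\beta_i$ with $\beta_i =_C h_i * \beta_{i+1}$ and $[\beta_i]\to 1$ in the shape topology. Set $x_n=[\beta_n]$.

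The main obstacle is showing $[\beta_n]\in K_n$. I would argue this from the construction rather than from shape convergence alone. Each $\beta_n$ is $=_C$ a product of conjugates of the small loops ${}_m\alpha_\bfs$, plus the path $\beta_n'$, whose nontrivial parts are conjugates of loops of diameter $O(l_n)$. So $[\beta_n]\in C\cdot\pi_1(X,x_0;\V_n)$ for covers $\V_n$ of mesh $\to0$. For $\V_n$ refining the cover from Lemma \ref{lem: factor}, this gives $\pi_1(X,x_0;\V_n)\subset\ker\sigma_{m*}$ for the appropriate $m$: a small loop maps into a star in $P_m$, which is contractible. Reindexing once more makes $x_n\in K_n$. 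Keeping these reindexings consistent, and checking that $2\U$-Spanier elements are killed by $\sigma_{n*}$ after subdivision, is the delicate bookkeeping step.
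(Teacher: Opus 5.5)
Your architecture matches the paper's. You split off the $H$-part of $h_n=g_{n+1}g_n^{-1}$ so that $h_n\in\ker(\sigma_{n*})$, and rewrite $h_n$ via Lemma \ref{lem: right basepoints} as a product of conjugates of small loops at dyadic points of $\gamma$. You then apply Lemma \ref{higman solutions} with all $n_i=1$ to get $\beta_i=_C h_i*\beta_{i+1}$, and take $g=[\beta_1]g_1$. The recursion must hold modulo $C$, as your construction gives, and not merely modulo $K_n$; otherwise the telescoping $gg_n^{-1}\equiv x_n$ fails.

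\textbf{The gap.} It is the last step, $[\beta_n]\in K_n$, and the route you sketch does not work as stated. The loop $\beta_n$ contains the loops ${}_nb_\bfs$ coming from $h_n$ itself. Their size is governed by a cover $\U_n$ with $\ker(\sigma_{n*})\subseteq\pi_1(X,x_0;2\U_n)$, i.e.\ a cover coarse enough that its Spanier group contains all of $\ker(\sigma_{n*})$. To get $[\beta_n]\in C\cdot\ker(\sigma_{n*})$ from smallness, these loops must each lie in a single member of the pullback under $\sigma_n$ of the open-star cover of $P_n$. Nothing relates these two scales at the same index. Passing to a subsequence does not help: reindexing replaces $h_n$ by a product of consecutive $h$'s and rebuilds all the $\beta$'s, so both scales move together. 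That is the circularity hidden in ``reindexing once more''. Also, refining the cover from Lemma \ref{lem: factor} gives the containment in the wrong direction; the cover you need is the open-star pullback, which is what your star argument actually uses.

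\textbf{The fix.} Shape convergence alone suffices once you use the recursion, contrary to your remark. Fix $n$. Iterating gives $[\beta_n]\equiv[h_n][h_{n+1}]\cdots[h_{j-1}][\beta_j]\pmod C$ for every $j>n$. Also $[h_m]\in\ker(\sigma_{m*})\subseteq\ker(\sigma_{n*})$ for $m\ge n$, since $\sigma_n=\sigma_{m,n}\circ\sigma_m$. Since $\ker(\sigma_{n*})$ is an open neighborhood of the identity (Lemma \ref{lem:shape is shape}) and $[\beta_j]\to1$, some $j$ has $[\beta_j]\in\ker(\sigma_{n*})$. Hence $[\beta_n]\in C\cdot\ker(\sigma_{n*})\subseteq K_n$.

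This is exactly how the paper concludes: after choosing $j\ge i$ with $[\beta_j]\in\ker(\sigma_{i*})$, it checks $K_i[\beta_1*\alpha]=K_i[h_{i-1}*\cdots*h_1*\alpha]=K_ig_i$. It makes every reindexing unnecessary. Your dyadic-depth reindexing is also avoidable: decompose $h_n$ at depth $n$, with $l_n=\mesh(\U_n)+\max_j\diam\bigl(\im(\gamma|_{[(j-1)/2^n,j/2^n]})\bigr)\to0$.
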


\begin{proof}
  Let $(K_ng_n)$ be a descending sequence of cosets.  We need only show that $\bigcap\limits_{n\in\mathbb N} K_ng_n$ is non-empty. Notice have $K_1g_1\supset K_2g_2$ which implies $g_2g_1^{-1} = a_1k_1$ for some $a_1\in H$ and some $k_1\in  \ker(\sigma_{1*})$.  Thus $K_2g_2 = K_2(k_1g_1)$.  Inductively, we will assume that $K_ig_i = K_i(k_{i-1}\cdots k_1g_1)$, where $k_j\in \ker(\sigma_{j*})$.  Then  $K_{i+1}g_{i+1} \supset K_ig_i = K_i(k_{i-1}\cdots k_1g_1)$, where $k_j\in \ker(\sigma_{j*})$, which implies that $g_{i+1}(k_{i-1}\cdots k_1g_1)^{-1} = a_{i}k_i$ for some $a_i\in H$ and some $k_i\in  \ker(\sigma_{i*})$.  Thus $K_{i+1}g_{i+1} = K_{i+1}(k_{i}\cdots k_1g_1)$.

  By Lemma \ref{lem: factor} each element $k_i$ can be represented by an element of a Spanier subgroup of a cover with mesh at most $l_i$ where $l_i$ converges to 0.   Fix $\gamma:[0,1]\to X$ a nullhomotopic space filling curve.  Lemma \ref{lem: right basepoints} allows us to find loops ${}_ib_\bfs$ based at $\gamma(\bfs)$, for $|\bfs|\leq i$, with diameter at most $2l_i$ such that $k_i = [h_i] $ where $h_i =_C \stackrel[{|\bfs|\leq i }]{}{\ast} (\eta_\bfs*{}_ib_\bfs*\overline\eta_\bfs)$ for any choice of paths $\eta_\bfs$ from $x_0$ to $\gamma(\bfs)$.

  By Lemma \ref{higman solutions} (letting $n_i=1$ for all $i$), there exists loops $\beta_i$ such that
  \[\beta_i =_C\left(\stackrel[{|\bfs|\leq i}]{}{\ast}(\eta_{\bfs}*\bfsi*\overline{\eta_{\bfs}}) \right)*(\beta_{i+1})=_C h_i* \beta_{i+1},\]

  which implies that $\beta_1 =_C h_1*h_2*\cdots *h_i*\beta_{i+1} =_C \beta_{i+1}* h_i*h_{i-1}*\cdots *h_1$ for all $i$.

  Let $\alpha$ be a representative of $g_1$.  Fix an $i\in\mathbb N$.  Then there exists a $j\geq i$ such that $[\beta_j]\in \ker(\sigma_{i*})\subset K_i$.  Since the commutator subgroup is contained in $K_i$, we have that $ K_i[\beta_1*\alpha] = K_i [\beta_j*h_{j-1}*\cdots * h_1*\alpha] = K_i [h_{i-1}*\cdots *h_1*\alpha]= K_i(k_{i-1}\cdots k_1g_1)=K_{i}g_{i}$.  Thus $[\beta_1*\alpha]\in K_{i}g_{i}$ for all $i$, as desired.

\end{proof}

\begin{thm}\label{thm:path connected-closed}
    Let $X$ be a Peano continuum.  Let $H$ be a closed subgroup of $\pi_1(X,x_0)$ which contains the commutator subgroup.  Then there exists a unique path-connected pro-covering $\rho:(E, e_0) \to (X, x_0)$ such that $\rho_*\bigl(\pi_1(E,e_0)\bigr)=H$.  
\end{thm}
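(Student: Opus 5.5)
The plan is to build the pro-covering from the subgroups $K_n = H\cdot\ker(\sigma_{n*})$, where $\{P_i,\sigma_{i,j}\}$ is a fixed polyhedral expansion of $X$.

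\emph{Step 1: each $K_n$ is a covering subgroup.} Since $H$ contains the commutator subgroup, $K_n$ is normal and $\pi_1(X,x_0)/K_n$ is abelian. Moreover $\pi_1(X,x_0)/K_n$ is a quotient of $\sigma_{n*}(\pi_1(X,x_0))/\sigma_{n*}(H)$, which is a finitely generated abelian group (a quotient of the abelianization of a subgroup's image, hence of a finitely generated group modulo something containing the commutators). A finitely generated abelian group is countable and has no infinitely divisible elements. Lemma~\ref{lem: covering subgroup}, applied to the quotient map, then shows that $K_n$ is a covering subgroup. Alternatively, $K_n\supset\ker(\sigma_{n*})$, and $\ker(\sigma_{n*})$ contains a Spanier subgroup of an open cover pulled back from $P_n$; so $K_n$ is open in the shape topology and contains a Spanier subgroup, which also gives a covering. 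The descending chain $K_1\supset K_2\supset\cdots$ yields a covering sequence $\rho_n:(E_n,e_n)\to(X,x_0)$, and we let $\rho:(E,e_0)\to(X,x_0)$ be the resulting pro-covering.

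\emph{Step 2: path-connectedness.} By Lemma~\ref{complete rel H_n}, $\pi_1(X,x_0)$ is complete relative to $(K_n)$. Theorem~\ref{complete imlplies path-connected} then gives that $E$ is path-connected.

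\emph{Step 3: the image equals $H$.} A loop in $E$ at $e_0$ projects to a loop in each $E_n$, so $\rho_*(\pi_1(E,e_0))\subset\bigcap_n K_n$. Conversely, a loop whose class lies in every $K_n$ lifts to a closed loop in each $E_n$. These lifts are compatible under the maps $\sigma_{n,m}$ by uniqueness of lifts, so they define a loop in $E$. Hence $\rho_*(\pi_1(E,e_0))=\bigcap_n K_n$. By Lemma~\ref{lem:shape is shape}, the sets $\ker(\sigma_{n*})g$ form a neighborhood base at $g$ in the shape topology, since finite intersections reduce to a single $n$ by compatibility of the $\sigma_{i*}$. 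So $\bigcap_n H\ker(\sigma_{n*})$ is exactly the closure $\overline H$, which equals $H$ because $H$ is closed.

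\emph{Step 4: uniqueness.} This is the step I expect to be the main obstacle. Let $E'$ be another path-connected pro-covering with image $H$, given by subgroups $H'_n$. Path-connectedness together with Theorem~\ref{complete imlplies path-connected} gives completeness relative to $(H'_n)$, and the lifting argument of Step 3 gives $\bigcap_n H'_n=H$. Each $H'_n$ is a covering subgroup, so by Lemma~\ref{lem: factor} it contains some $\ker(\sigma_{m*})$, and hence contains $K_m$.

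For the reverse comparison I would argue by contradiction. Suppose some $K_n$ is not eventually containing $H'_m$, i.e.\ $H'_m\not\subset K_n$ for all $m$. Since $\pi_1(X,x_0)/K_n$ is finitely generated abelian, the images of the $H'_m$ in it stabilize. I would then try to use completeness relative to $(H'_m)$ to build a nested sequence of cosets $H'_m g_m$ with $g_m\notin K_n$. Its nonempty intersection produces an element of $\bigcap_m H'_m g_m$, and I would aim to derive from this that some element of $H=\bigcap_m H'_m$ lies outside $K_n$, contradicting $H\subset K_n$.

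Once both comparisons hold, the two chains are cofinal in each other. Cofinal covering sequences have homeomorphic inverse limits over $X$, which gives uniqueness.
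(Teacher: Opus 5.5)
Your Steps 1--3 are the paper's existence proof: the covering sequence for $K_n=H\cdot\ker(\sigma_{n*})$, path-connectedness from Lemma~\ref{complete rel H_n} and Theorem~\ref{complete imlplies path-connected}, and $\bigcap_n K_n=\overline H=H$. In Step 1, rely only on the Spanier-subgroup argument. A subgroup of a finitely generated group need not be finitely generated, so your first justification is not valid as stated.

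The paper does not prove uniqueness; it cites Theorem 2 of \cite{ConnerHerfortKentPavesic2025}. So your Step 4 has to stand on its own, and it has a genuine gap. First, finite generation of $\pi_1(X,x_0)/K_n$ does not make the images of the $H'_m$ stabilize. Finitely generated abelian groups do not satisfy the descending chain condition ($2^m\Z$ in $\Z$). The stabilization of $H'_mK_n$ is true, but for a different reason: $K_n$ is open and $E'$ is path-connected, so Corollary~\ref{cor: not path-connected} applies.

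The more serious problem is that completeness relative to $(H'_m)$ is the wrong tool for the contradiction. That completeness, together with $\bigcap_m H'_m=H$, $K_{j(m)}\subseteq H'_m$, stabilization, and finite generation of the quotients, does not force $H'_m\subseteq K_n$ for large $m$. For example, in $A=\prod_{i\ge1}\Z/2$:
take $H=0$;
let $H'_m$ be the sequences vanishing in the first $m$ coordinates;
let $\phi\colon A\to\Z/2$ be a homomorphism vanishing on $\bigoplus_i\Z/2$ with $\phi(1,1,1,\dots)=1$;
put $K_j=H'_j\cap\ker\phi$, so $A/K_j\cong(\Z/2)^{j+1}$.
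All of your listed properties hold, yet no $H'_m$ lies in $K_1$.

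What fails in this example is completeness relative to $(K_j)$. That is exactly the path-connectedness of your own $E$, and it is the missing ingredient. With it the argument closes:
\begin{enumerate}
\item Let $L_j$ be the eventual value of $H'_mK_j$. Then $L_j=L_{j+1}K_j$.
\item Hence for $x\in L_n$ you can choose $x_j\in L_j$ with $x_n=x$ and $K_{j+1}x_{j+1}\subseteq K_jx_j$.
\item Lemma~\ref{complete rel H_n} gives $z\in\bigcap_j K_jx_j\subseteq\bigcap_j L_j$.
\item Since $K_j\subseteq H'_m$ for $j\ge j(m)$, also $L_j\subseteq H'_m$ for large $j$. So $z\in\bigcap_m H'_m=H\subseteq K_n$.
\item Since $z\in K_nx$, it follows that $x\in K_n$.
\end{enumerate}
Therefore $L_n=K_n$, i.e.\ $H'_m\subseteq K_n$ for large $m$, and the two chains are cofinal.
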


\begin{proof}
    The uniqueness conclusion will follow from Theorem 2 of \cite{ConnerHerfortKentPavesic2025}.

    Suppose that $\{P_i, \sigma_{i,j}\}$ is a polyhedral expansion of the Peano continuum $X$.  Let $\rho_i:(E_i, e_i) \to (X, x_0)$ be the covering space of $X$ such that $\rho_{i*} \bigl(\pi_1(E_i, e_i)\bigr) = H\cdot \ker(\sigma_{i*})$.  Let $E =  \varprojlim\bigl\{ E_i, \nu_{i,j}\bigr\}$ where $\nu_{i,j}: E_i \to E_j$ is the induced covering map.  Thus $E$ is a pro-covering of $X$.  By Lemma \ref{complete rel H_n} and Theorem \ref{complete imlplies path-connected}, $E$ is path-connected.

    A loop in $X$ at $x_0$ will lift to a loop in $E$ if and only if it lifts to a loop in each $E_j$.  Since  $H$ is closed, $\rho_*\bigl(\pi_1(E,e_0)\bigr) = \bigcap\limits_n \rho_{n*} \bigl(\pi_1(E_n, e_n)\bigr) = \bigcap\limits_n H\cdot \ker(\sigma_{n*})= H$.

\end{proof}

\begin{defn}\label{defn:infinite commutator subgroup}
  The $\Z$-kernel of $\pi_1(X,x_0)$ is the intersection of all kernels of homomorphisms from $\pi_1(X,x_0)$ to $\mathbb Z$, see \cite{ConnerHerfortKentPavesic18}.  The \emph{infinite commutator subgroup} is the closure (in the shape topology) of the commutator subgroup of the fundamental group.
\end{defn}


\begin{cor}\label{cor:infinite commutator subgrup}
    Let $X$ be a Peano continuum.  Then there exists a unique path-connected pro-covering over $X$ whose fundamental group is the $\Z$-kernel or the infinite commutator subgroup of  $\pi_1(X,x_0)$.

\end{cor}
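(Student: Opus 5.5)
The plan is to apply Theorem \ref{thm:path connected-closed} twice. For each of the two subgroups we need only check two things: that it is closed in the shape topology, and that it contains the commutator subgroup $C$ of $\pi_1(X,x_0)$. Existence and uniqueness of the path-connected pro-covering then follow directly from that theorem.

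The infinite commutator subgroup $\overline C$ is the easier case. It contains $C$ by definition and is closed by definition. The one point to check is that $\overline C$ is actually a subgroup. For this I would use that the shape topology makes $\pi_1(X,x_0)$ a topological group. By Lemma \ref{lem:shape is shape}, its topology is the initial topology for the homomorphisms $\sigma_{i*}$ into discrete groups, and the preimages $\sigma_{i*}^{-1}(O)$ are unions of cosets of the normal subgroups $\ker(\sigma_{i*})$. So the topology has a neighborhood base of the identity consisting of normal subgroups, and multiplication and inversion are continuous. It follows that the closure of a subgroup is a subgroup; concretely, $\overline C=\bigcap_i C\cdot\ker(\sigma_{i*})$, which is an intersection of subgroups. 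Theorem \ref{thm:path connected-closed} then applies to $H=\overline C$.

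For the $\Z$-kernel $Z=\bigcap_{\phi}\ker\phi$, taken over all homomorphisms $\phi\colon\pi_1(X,x_0)\to\Z$: since $\Z$ is abelian, every such kernel contains $C$, and hence so does $Z$. Closedness is the main step. I would show each $\ker\phi$ is open, hence closed, being an open subgroup whose complement is a union of its cosets. The group $\Z$ is countable and abelian with no infinitely divisible elements, so Lemma \ref{lem: covering subgroup} (a Peano continuum is first countable) shows that $\ker\phi$ is a covering subgroup. Lemma \ref{lem: factor} then gives an $i$ with $\ker(\sigma_{i*})\leq\ker\phi$. This makes $\ker\phi$ a union of cosets of $\ker(\sigma_{i*})$, that is, a union of sets $\sigma_{i*}^{-1}(\{g\})$, and so it is open by Lemma \ref{lem:shape is shape}. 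An intersection of closed sets is closed, so $Z$ is closed, and applying Theorem \ref{thm:path connected-closed} with $H=Z$ finishes the proof.
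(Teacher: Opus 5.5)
Your proof is correct and matches the paper's argument: both apply Theorem~\ref{thm:path connected-closed}, note that $\overline C$ is closed by definition, and show the $\Z$-kernel is closed as an intersection of kernels that are open, hence closed, via Lemma~\ref{lem: covering subgroup}. You also spell out two points the paper leaves implicit: that $\overline C=\bigcap_i C\cdot\ker(\sigma_{i*})$ is actually a subgroup, and that covering subgroups are open in the shape topology by Lemma~\ref{lem: factor} and Lemma~\ref{lem:shape is shape}.
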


\begin{proof}
    Since both the $\Z$-kernel and the infinite commutator subgroup contain the commutator subgroup and the infinite commutator subgroup is closed by definition, we need only show that the $\Z$-kernel is closed.  Every homomorphism from $\pi_1(X,x_0)$ to $\Z$  has an open kernel by Lemma \ref{lem: covering subgroup}.  Since open subgroups are also closed, every homomorphism from $\pi_1(X,x_0)$ to $\Z$ has a closed kernel.  The $\Z$-kernel is the intersection of kernels of all homomorphisms to $\Z$, each of which is closed.   So the $\Z$-kernel is the intersection of closed subgroups and hence is closed.
\end{proof}

\begin{lem}\label{lem: continuous lift}
    Suppose that $\rho:(E, e_0)\to (X,x_0)$ and  $\tilde \rho:(\tilde E, \tilde e_0)\to (X,x_0)$ are two path-connected pro-coverings over a Peano continuum $X$.  Then there exists a map $\theta: E\to \tilde E$ such that $\rho= \tilde \rho\circ\theta$ if and only if $\rho_*\big(\pi_1(E,e_0)\bigr)\subset \tilde \rho_*\big(\pi_1(\tilde E,\tilde e_0)\bigr).$
\end{lem}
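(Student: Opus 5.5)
The plan is to treat the two directions separately. The forward direction is pure covering theory. If $\theta\colon E\to\tilde E$ satisfies $\rho=\tilde\rho\circ\theta$, then any loop $\tilde\ell$ in $E$ at $e_0$ gives a loop $\theta\circ\tilde\ell$ in $\tilde E$. We should normalize basepoints: if $\theta(e_0)\neq \tilde e_0$, either add the basepoint-preserving requirement implicitly or use a path in $\tilde E$ from $\tilde e_0$ to $\theta(e_0)$ lying over a loop at $x_0$. The simplest reading is that $\theta$ is pointed, $\theta(e_0)=\tilde e_0$, and I will assume that. Then
\[
\rho_*[\tilde\ell]=\tilde\rho_*[\theta\circ\tilde\ell]\in\tilde\rho_*\bigl(\pi_1(\tilde E,\tilde e_0)\bigr),
\]
which proves the inclusion.

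For the converse, write $E=\varprojlim E_i$ and $\tilde E=\varprojlim \tilde E_j$, with coverings $\rho_i,\tilde\rho_j$ and projections $\sigma_i,\tilde\sigma_j$. The aim is to build compatible maps $\theta_j\colon E\to\tilde E_j$ with $\tilde\rho_j\circ\theta_j=\rho$ and $\tilde\nu_{j,k}\circ\theta_j=\theta_k$. These then assemble into $\theta$ by the universal property of the inverse limit.

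The key claim is that for each $j$ there is an $i$ with
\[
\rho_{i*}\bigl(\pi_1(E_i,e_i)\bigr)\subseteq\tilde\rho_{j*}\bigl(\pi_1(\tilde E_j,\tilde e_j)\bigr)=:\tilde H_j .
\]
Given this, the lifting criterion for covering spaces, applied to the covering $E_i\to X$, which is a connected, locally path-connected covering space of a Peano continuum, gives a pointed map $\lambda\colon E_i\to\tilde E_j$ over $X$. We then set $\theta_j=\lambda\circ\sigma_i$. Uniqueness of pointed lifts makes these independent of the choice of $i$ and compatible in $j$.

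To prove the claim, note that $\tilde H_j$ is a covering subgroup. By Lemma~\ref{lem: factor}, for a polyhedral expansion $\{P_n\}$ there is an $n$ with $\ker(\sigma_{n*})\le\tilde H_j$, so $\tilde H_j$ is open in the shape topology. Path-connectedness of $E$ should give
\[
\rho_*\bigl(\pi_1(E,e_0)\bigr)=\bigcap_i H_i, \qquad H_i=\rho_{i*}\bigl(\pi_1(E_i,e_i)\bigr),
\]
as in the proof of Theorem~\ref{thm:path connected-closed}: a loop lifting to a loop in every $E_i$ lifts to a loop in $E$. The hypothesis then gives $\bigcap_i H_i\subseteq\tilde H_j$.

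The main obstacle is passing from this intersection statement to a single finite stage: we need some $H_i\subseteq\tilde H_j$, and an intersection contained in a subgroup does not in general force a single term to be contained in it. Here is where path-connectedness enters, through Theorem~\ref{complete imlplies path-connected}: $\pi_1(X,x_0)$ is complete relative to $(H_i)$.

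Suppose for contradiction that $H_i\not\subseteq\tilde H_j$ for all $i$. Each $H_i$ is a covering subgroup, and $\tilde H_j$ has countable index because $X$ is a Peano continuum and covering spaces are second countable. Consider the finitely many, or countably many, cosets $\tilde H_j g$, and look at the descending sets $H_i\setminus\tilde H_j$. A K\"onig-type argument should produce a descending sequence of cosets $H_ig_i$ with each $g_i\notin\tilde H_j$ and with $H_ig_i$ disjoint from $\tilde H_j$. Openness of $\tilde H_j$ is what allows this: $\tilde H_j\supseteq\ker(\sigma_{n*})$, and I would try to compare with $H_i\cdot\ker\sigma_{n*}$, which has finite index over the finitely generated image in $\pi_1(P_n)$. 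Completeness then yields an element $g$ in $\bigcap_i H_ig_i$. Since the intersection of these cosets is a coset of $\bigcap_i H_i$ contained in $\tilde H_j$, we get $g\in\tilde H_j$. This contradicts the disjointness of the cosets from $\tilde H_j$.

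Making this coset selection rigorous, specifically arranging that the chosen cosets avoid $\tilde H_j$ at every stage, is the step I expect to be hardest. I would first try to do it via the finiteness of the image of $H_i$ in $\pi_1(P_n)/\sigma_{n*}(\tilde H_j)$-cosets.
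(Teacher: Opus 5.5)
Your forward direction and the final assembly step are fine: lifting $E_i\to X$ to $\tilde E_j$ and composing with $\sigma_i$ works. The problem is the step everything rests on, namely that each $\tilde H_j$ contains some $H_i$. It is not proved, and the argument you sketch for it cannot work.

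First, this claim is in fact equivalent to the converse direction, so the reduction by itself is no progress. Suppose a pointed $\theta$ exists. Continuity of $\tilde\sigma_j\circ\theta$ at $e_0$ gives a basic neighbourhood $\sigma_i^{-1}(O)$ mapped into the sheet of $\tilde E_j$ through $\tilde e_j$. The lift to $E$ of any loop representing $g\in H_i$ ends in $\sigma_i^{-1}(O)$, so its lift to $\tilde E_j$ closes up, i.e.\ $H_i\subseteq\tilde H_j$.

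Second, your contradiction is not a contradiction. Suppose the cosets $H_ig_i$ are nested and disjoint from $\tilde H_j$. Completeness gives $g$ with $\bigcap_iH_ig_i=(\bigcap_iH_i)g\subseteq\tilde H_jg$. This coset lies in $\tilde H_j$ only if $g\in\tilde H_j$, which is exactly what disjointness rules out. So you only learn $g\notin\tilde H_j$, which is perfectly consistent with $\bigcap_iH_i\subseteq\tilde H_j$.

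No cleverer choice of cosets of the $H_i$, combined with index or K\"onig-type counting, can fix this. Take $G=(\mathbb Z/2)^{\mathbb N}$ with $H_i$ the sequences vanishing in the first $i$ coordinates. Then $G$ is complete relative to $(H_i)$ and $\bigcap_iH_i=0$. Yet the kernel of $x\mapsto\lim_{\mathcal F}x_k$, for $\mathcal F$ a non-principal ultrafilter, has index $2$ and contains no $H_i$: each $H_i$ contains the sequence equal to $1$ from coordinate $i+1$ on. So openness of $\tilde H_j$ in the shape topology has to enter through genuinely topological input.

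Concretely, pick $n$ with $\ker\sigma_{n*}\le\tilde H_j$ and $\sigma_{n*}$ onto. Path-connectedness of $E$ does force $\sigma_{n*}(H_i)$ to stabilize, by Corollary 2.3 of \cite{ConnerHerfortKentPavesic2021} as in Lemma~\ref{lem: not path-connected}. So your claim is equivalent to $\sigma_{n*}(\bigcap_iH_i)=\bigcap_i\sigma_{n*}(H_i)$. That is, you must produce an element of $\bigcap_iH_i$ with a prescribed image $q$ in $\pi_1(P_n)$.

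The nested sequence that would do this is $(H_i\cap\ker\sigma_{n*})h_i$, with $h_i\in H_i$ and $\sigma_{n*}(h_i)=q$. Its intersection lies in $\bigcap_iH_i$ and maps to $q$. What this needs is completeness relative to $(H_i\cap\ker\sigma_{n*})$, i.e.\ path-connectedness of a different pro-covering. That is not a formal consequence of completeness relative to $(H_i)$. The construction of Section~\ref{Sec: Main Construction} does not supply it either, since it controls loops only modulo the commutator subgroup, and the $H_i$ here need not contain it.

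The paper supplies the missing input as follows:
\begin{enumerate}
\item It writes each $\tilde E_i$ as the pullback along $\sigma_{n_i}$ of a covering $\tilde q_i\colon\tilde P_i\to P_{n_i}$, by Lemma 3.5 of \cite{ConnerHerfortKentPavesic2021}.
\item It invokes Corollary 3.12 of \cite{ConnerHerfortKentPavesic2025} to lift $\sigma_{n_i}\circ\rho$ from the non-locally-path-connected space $E$ to the covering of $P_{n_i}$ with group $(\sigma_{n_i}\rho)_*\pi_1(E)$.
\item It pushes this lift down to $\tilde P_i$, using $\rho_*\pi_1(E)\subseteq\tilde\rho_{i*}\pi_1(\tilde E_i)$.
\item It obtains $\theta_i\colon E\to\tilde E_i$ from the pullback property.
\end{enumerate}
That lifting result for path-connected pro-coverings is exactly what your proposal is missing. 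Without it, or an equivalent argument, your proof of the converse is incomplete.
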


    \begin{proof}
        The forward direction is immediate and we need only prove that if $\rho_*\big(\pi_1(E,e_0)\bigr)\subset \tilde \rho_*\big(\pi_1(\tilde E,\tilde e_0)\bigr)$, then there exists a continuous map $\theta : E\to \tilde E$ such that $\rho= \tilde \rho\circ\theta$.
        
        Suppose that $\tilde E = \varprojlim \tilde E_i$, where $\tilde \rho_i: \tilde E_i\to X$ is a covering map, for each $i$.  Since $X$ is a Peano continuum, we can find a polyhedral expansion $(P_i)$ of  $X$ such that the projections $\sigma_i: X\to P_i$ induce surjective homomorphisms of fundamental groups.  By Lemma 3.5 in \cite{ConnerHerfortKentPavesic2021}, for each $i$, there exists an $n_i$ and covering map $\tilde q_i: \tilde P_i \to P_{n_i}$ such that $\tilde \rho_i:\tilde E_i\to X$ is homeomorphic to the pullback of $\tilde q_i$ along $\sigma_{n_i}$.

        By Corollary 3.12 of \cite{ConnerHerfortKentPavesic2025}, for each $i$, there exists a continuous map $\theta_i': E \to\tilde{ \tilde P}_i$, where $\eta_i: \tilde{ \tilde P}_i\to P_{n_i}$ is a covering map with $(\sigma_{n_i}\circ \rho)_*\big(\pi_1(E, e_0)\big)= \eta_{i*}\big(\pi_1(\tilde{\tilde P}_i, \tilde{\tilde p}_i)\big)$, for appropriate basepoints.  Since $\im( \rho_{*})\subset\im( \tilde \rho_{*})\subset\im( \tilde \rho_{i*})$, there is a covering map from $\tilde{\tilde P}_i$ to $\tilde P_i$ that makes the diagram below on the right, without $\theta_i$, commute.

\[\begin{tikzcd}
	E & {\tilde{\tilde P}_i} &&& E && {\tilde E_{i+1} } \\
	& {\tilde E_i } & {\tilde P_i} &&&& {\tilde E_i } \\
	& X & {P_{n_i}} &&& X
	\arrow["{\theta_i'}", from=1-1, to=1-2]
	\arrow["{\theta_i}"{pos=0.7}, dashed, from=1-1, to=2-2]
	\arrow["\rho"', from=1-1, to=3-2]
	\arrow[from=1-2, to=2-3]
	\arrow["{\theta_{i+1}}", from=1-5, to=1-7]
	\arrow["{\theta_i}"', from=1-5, to=2-7]
	\arrow["\rho"', from=1-5, to=3-6]
	\arrow[from=1-7, to=2-7]
	\arrow["{\tilde\rho_{i+1}}", shift left=2, curve={height=-40pt}, from=1-7, to=3-6]
	\arrow[from=2-2, to=2-3]
	\arrow["{\tilde\rho_i}", from=2-2, to=3-2]
	\arrow["{\tilde q_i}"', from=2-3, to=3-3]
	\arrow["{\tilde \rho_i}", from=2-7, to=3-6]
	\arrow["{\sigma_{n_i}}"', from=3-2, to=3-3]
\end{tikzcd}\]

Then by the universal properties of pullbacks, there exists a continuous map $\theta_i: E \to \tilde E_i$  such that the full diagram on the left commutes.  Since covering spaces have the unique path-lifting property, $\rho = \tilde \rho_i \circ \theta_i$, and $\rho = \tilde \rho_{i+1} \circ \theta_{i+1}$, it is an exercise to check that the diagram on the right commutes.  Thus the sequence of maps $\theta_i$ induce a map $\theta: E\to \tilde E$ such that $\rho = \tilde \rho\circ \theta$.

    \end{proof}

\begin{thm}\label{thm:minimal Cech}
     Suppose that $\rho:(E, e_0)\to (X,x_0)$ is the unique path-connected pro-covering over a Peano continuum $X$ such that $\rho_*\bigl(\pi_1(E,e_0)\bigr)$ is the $\mathbb Z$-kernel of $\pi_1(X,x_0)$.  Then $E$ is the minimal path-connected pro-covering over $X$ which induces the trivial homomorphism on \v Cech cohomology.
\end{thm}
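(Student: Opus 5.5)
We prove two things: that $\rho^*\colon \check H^1(X)\to \check H^1(E)$ is trivial, and that any path-connected pro-covering $\tilde\rho\colon(\tilde E,\tilde e_0)\to(X,x_0)$ with $\tilde\rho^*$ trivial on $\check H^1$ receives a map $\theta\colon E\to\tilde E$ with $\rho=\tilde\rho\circ\theta$. By Lemma \ref{lem: continuous lift}, the second claim reduces to the inclusion
\[\rho_*\bigl(\pi_1(E,e_0)\bigr)\subset\tilde\rho_*\bigl(\pi_1(\tilde E,\tilde e_0)\bigr).\]
So we need to compare the $\Z$-kernel with the image subgroup of an arbitrary pro-covering that kills $\check H^1$.

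\textbf{The key identification.} For a Peano continuum $X$, we use $\check H^1(X)\cong[X,S^1]$. A class is a homotopy class of maps $f\colon X\to S^1$, and it induces $f_*\colon\pi_1(X,x_0)\to\Z$. Conversely, every homomorphism $\phi\colon\pi_1(X,x_0)\to\Z$ arises this way. By Lemma \ref{lem: covering subgroup}, $\ker\phi$ is a covering subgroup. Lemma \ref{lem: factor} then shows $\phi$ factors through $\sigma_{i*}$ for some $i$, giving a homomorphism $\pi_1(P_i)\to\Z$ (after arranging $\sigma_{i*}$ surjective, as in the proof of Lemma \ref{lem: continuous lift}). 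This is realised by a map $P_i\to S^1$, since $S^1=K(\Z,1)$. Hence homomorphisms $\pi_1(X,x_0)\to\Z$ correspond exactly to maps $X\to S^1$ up to homotopy, and the $\Z$-kernel is
\[\bigcap_{f\colon X\to S^1}\ker f_*.\]

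\textbf{$\rho$ kills $\check H^1$.} Let $f\colon X\to S^1$. Then $(f\circ\rho)_*$ kills $\pi_1(E,e_0)$, because $\rho_*\pi_1(E)$ is the $\Z$-kernel. We must show $f\circ\rho$ is nullhomotopic. Here $E$ is path-connected, but it need not be locally path-connected, so the classical lifting criterion is unavailable. Instead we lift through the covering $\mathbb R\to S^1$ using the pro-covering structure. Each $E_n$ is a covering of $X$ with $\rho_{n*}\pi_1(E_n)=\ker(f_*)\cdot(\ldots)$. Concretely, we build $E$ as in Theorem \ref{thm:path connected-closed}, so that $\rho_{n*}\pi_1(E_n)\subset\ker f_*$ for large $n$; this is possible because $\ker f_*$ is open. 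Since $E_n$ is a locally path-connected covering of $X$, $f\circ\rho_n$ lifts to $\mathbb R$. Then $f\circ\rho=f\circ\rho_n\circ\sigma_n$ lifts to $\mathbb R$ as well, and is therefore nullhomotopic.

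\textbf{Minimality.} Suppose $\tilde\rho^*$ is trivial on $\check H^1$, and take $g\in\pi_1(X,x_0)$ in the $\Z$-kernel. We want $g\in H':=\tilde\rho_*\pi_1(\tilde E,\tilde e_0)$. The first step is to show $H'\supseteq$ the $\Z$-kernel, by proving $H'$ is an intersection of kernels of maps to $\Z$. Since $\tilde\rho_*$ maps $\tilde E$'s loops into every $\ker f_*$, we know $H'$ lies inside the $\Z$-kernel. The reverse inclusion must come from the covering stages: $H'=\bigcap_n\tilde\rho_{n*}\pi_1(\tilde E_n)$, and each stage subgroup has to be shown to contain the $\Z$-kernel. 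The natural route is to argue that $\tilde\rho_n$ must be abelian with free quotient, since a finite or torsion deck quotient would give nontrivial pulled-back classes. The main obstacle is precisely this step: turning the vanishing of $\tilde\rho^*$ on $\check H^1$ into a subgroup statement at each covering stage. If that fails, the fallback is to interpret "minimal" as comparison against the family of such pro-coverings, and argue via the inverse limit of coverings $\check H^1\to\check H^1(E_n)$ using continuity of Čech cohomology.
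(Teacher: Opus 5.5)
Your proof that $\rho^*$ is trivial on $\check H^1$ is fine. Lifting $f\circ\rho_n$ to $\mathbb R$ at a stage with $\rho_{n*}\pi_1(E_n)=H\cdot\ker(\sigma_{n*})\le\ker f_*$ is, if anything, more careful than the paper's one-line justification. Your identification of the $\mathbb Z$-kernel with $\bigcap_{f\colon X\to S^1}\ker f_*$ is also exactly what is needed.

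The gap is in minimality: you have the universal property backwards. The paper defines ``minimal'' to mean that every path-connected pro-covering $\eta\colon F\to X$ killing $\check H^1$ maps \emph{into} $E$, i.e.\ there is $\theta\colon F\to E$ with $\eta=\rho\circ\theta$. You instead try to map $E$ into every such $\tilde E$, which needs the inclusion $\mathbb Z\text{-kernel}\subseteq\tilde\rho_*\pi_1(\tilde E,\tilde e_0)$. That inclusion is false in general, so the step you flag as ``the main obstacle'' cannot be overcome.
\begin{itemize}
\item For the figure eight, the universal cover is a path-connected pro-covering killing $\check H^1$ with trivial subgroup. The $\mathbb Z$-kernel there is the nontrivial commutator subgroup of the free group, so no map $E\to\tilde E$ over $X$ can exist.
\item For $X=\mathbb{RP}^2$, the $\mathbb Z$-kernel is all of $\pi_1$, so $E=X$. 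A map $E\to S^2$ over $X$ would be a section of the nontrivial double cover $S^2\to\mathbb{RP}^2$, which does not exist.
\end{itemize}
Your suggestion that each stage covering $\tilde\rho_n$ must be abelian with free quotient is also unfounded. Stage subgroups of a pro-covering killing $\check H^1$ need not even be normal.

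The fix is already in your text. You observed that $f\circ\tilde\rho$ is nullhomotopic for every $f\colon X\to S^1$, which gives $\tilde\rho_*\pi_1(\tilde E,\tilde e_0)\subseteq\bigcap_f\ker f_*=\rho_*\pi_1(E,e_0)$. Apply Lemma \ref{lem: continuous lift} with the roles of $E$ and $\tilde E$ interchanged. This produces $\theta\colon\tilde E\to E$ with $\tilde\rho=\rho\circ\theta$, which is precisely the paper's argument. Drop the attempted reverse inclusion and the fallback: the correct direction is the easy one.
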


By minimal, we mean that if $\eta: F\to X$ is a path-connected pro-covering that induces the trivial homomorphism on \v Cech cohomology, then there exists a continuous map $\theta: F\to E$ such that $\eta = \rho\circ \theta$.

\begin{proof}
    Fix $\{P_i, \sigma_{i,j}\}$ a polyhedral expansion for $X$.  We will identify the \v Cech cohomology with $[X, \mathbb S^1]$, i.e., homotopy classes of maps from $X$ to $\mathbb S^1$.  Since every map from $X$ to $S^1$ factors through a projection $\sigma_i: X \to P_i$ and $\pi_1(E, e_0)\leq \ker( \sigma_{i*})$, the pro-covering $\rho$ must induce the trivial map from $[X, \mathbb S^1]$ to $[E, \mathbb S^1]$.

    Let $\eta: F\to X$ be a pro-covering that induces the trivial homomorphism on \v Cech cohomology.  Then $\eta_*\bigl(\pi_1(F,f_0)\bigr)$ must be contained in the $\Z$-kernel of $\pi_1(X,x_0)$.    Thus  $\eta_*\bigl(\pi_1(F,f_0)\bigr)\subset \rho_{*} \bigl(\pi_1(E, e_0)\bigr)$ and then, by Lemma \ref{lem: continuous lift}, there exists a continuous map $\theta :F\to E$ such that $\eta = \rho\circ \theta$.
\end{proof}

\section{Cotorsion subgroups and splittings}\label{cotorsion}

We will now turn our attention to proving Theorem \ref{thm: higman complete}.   Thus in many settings this allows one to split the first homology group into the product of a Baer-Specker group and a cotorsion group.

\begin{defn}\label{defn: cotorsion}
    A group $G$ is \emph{cotorsion} if $\operatorname{Ext}(\mathbb Q, G)=0$, which is equivalent to every exact sequence $0\to G\to H\to K\to 0$ is split if $K$ is torsion-free, see \cite[Chapter 9]{Fuchs2015}.
\end{defn}

\begin{defn}
    A group $G$ is \emph{Higman complete} if every system of equations $x_i = w_i(f_i,x_{i+1})$, where $w_i$ is a word with variable $x_{i+1}$ and constants $f_i$, has a solution for any sequence of elements $(f_i)$ from $G$.
\end{defn}

\begin{lem}
    An abelian group $G$ is Higman complete if and only if every system of equations of the form $y_i=(g_iy_{i+1})^{n_i}$ for given $g_i\in G$ and integers $n_i$ admits a solution sequence $(y_i)_{i\ge1}$.
\end{lem}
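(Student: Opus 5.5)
One direction is immediate: the system $y_i=(g_iy_{i+1})^{n_i}$ is itself of the form $x_i=w_i(f_i,x_{i+1})$, with $f_i=g_i$ and $w_i(f,x)=(fx)^{n_i}$. So if $G$ is Higman complete, every such system has a solution. The work is in the converse.

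For the converse, assume every system $y_i=(g_iy_{i+1})^{n_i}$ has a solution. Take an arbitrary system $x_i=w_i(f_i,x_{i+1})$. The plan is to put each word into a normal form and then absorb the part that does not involve $x_{i+1}$.

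First, since $G$ is abelian (written multiplicatively), each word $w_i$ collapses to a normal form $w_i(f_i,x_{i+1})=c_i\,x_{i+1}^{n_i}$. Here $n_i\in\mathbb Z$ is the exponent sum of $x_{i+1}$ in $w_i$, and $c_i\in G$ is the evaluated product of all the constant occurrences. (If the constants $f_i$ are tuples, $c_i$ is a product of their powers.)

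Next, separate cases according to whether $n_i$ vanishes.

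\emph{Case $n_i\neq0$ for all $i$.} Substitute $x_i=z_i u_i$, where the correction terms $u_i$ are to be chosen so that the system becomes $z_i=(g_i z_{i+1})^{n_i}$. The requirement is
\[z_iu_i=c_iz_{i+1}^{n_i}u_{i+1}^{n_i}=(g_iz_{i+1})^{n_i}u_i,\]
that is, $c_iu_{i+1}^{n_i}=g_i^{n_i}u_i$. This is satisfied by taking $u_i=1$ and asking for $g_i^{n_i}=c_i$, which needs divisibility that need not hold.

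Instead I absorb the constant into the next variable. Set $y_i=x_i\,d_i$ for constants $d_i$ to be determined. The equation $x_i=c_ix_{i+1}^{n_i}$ becomes
\[y_i d_i^{-1}=c_i\,y_{i+1}^{n_i}d_{i+1}^{-n_i},\]
so
\[y_i=c_i\,d_i\,d_{i+1}^{-n_i}\,y_{i+1}^{n_i}.\]
I want to set this up so that $y_i=(g_iy_{i+1})^{n_i}=g_i^{n_i}y_{i+1}^{n_i}$. Choosing $d_{i+1}=1$ and $d_i=c_i^{-1}$ conflicts across consecutive indices.

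The cleaner route avoids this bookkeeping. Use the trivial $n=1$ trick: rewrite the single equation $x_i=c_ix_{i+1}^{n_i}$ as two equations,
\[x_i=(c_i v_i)^1,\qquad v_i=(1\cdot x_{i+1})^{n_i},\]
introducing a new auxiliary variable $v_i$. Interleave them into one sequence
\[y_{2i-1}=x_i,\quad y_{2i}=v_i,\qquad g_{2i-1}=c_i,\quad g_{2i}=1,\quad n'_{2i-1}=1,\quad n'_{2i}=n_i.\]
The interleaved system has exactly the form $y_j=(g_jy_{j+1})^{n'_j}$. A solution of it yields a solution $(x_i)$ of the original system.

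\emph{Case $n_i=0$ for some $i$.} The same interleaving already handles this, because the form allows $n'_{2i}=0$, which gives $v_i=1$ and $x_i=c_i$. So no separate case analysis is needed.

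The main obstacle is only the normal-form reduction $w_i=c_ix_{i+1}^{n_i}$. This is routine in an abelian group, and the interleaving removes any need for divisibility.
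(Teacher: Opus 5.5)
Your proof is correct. The forward direction is the same as the paper's, but for the converse you take a different route.

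The paper uses a direct change of variables. Starting from the normal form $x_i=f_ix_{i+1}^{n_i}$, it sets $y_i=f_i^{-1}x_i$, so that $y_i=x_{i+1}^{n_i}=(f_{i+1}y_{i+1})^{n_i}$. This is already the required form, with $g_i=f_{i+1}$ and the same exponents $n_i$, and a solution $(a_i)$ gives back $x_i=f_ia_i$.

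You instead split each equation as $x_i=c_iv_i$, $v_i=x_{i+1}^{n_i}$ and interleave. This works: $y_{2i-1}=(c_iy_{2i})^{1}$ and $y_{2i}=(1\cdot y_{2i+1})^{n_i}$ recover $x_i=c_ix_{i+1}^{n_i}$, and $n_i=0$ is handled uniformly.

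What each buys:
\begin{itemize}
\item The paper's substitution is a one-liner that keeps the index set and the exponent sequence unchanged.
\item Your interleaving needs no clever substitution, at the cost of doubling the indices and inserting exponent-$1$ equations.
\item Both arguments use commutativity only to reduce $w_i$ to the form $c_ix_{i+1}^{n_i}$.
\item Your version of that reduction, with $c_i$ the evaluated product of the constant occurrences, is more careful than the paper's, which simply writes $f_ix_{i+1}^{n_i}$.
\end{itemize}

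Your abandoned second attempt was one step from the paper's proof. Take $d_i=c_i^{-1}$ for every $i$; then $c_id_id_{i+1}^{-n_i}=c_{i+1}^{n_i}$, so $g_i=c_{i+1}$ and there is no conflict across indices. In a final write-up, delete the false starts (the $z_iu_i$ and $d_i$ attempts) and keep only the interleaving argument.
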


\begin{proof}
    To prove that $G$ is Higman complete if every system of equations $y_i=(g_iy_{i+1})^{n_i}$ admits a solution sequence, fix an arbitrary system of equations $x_i=w_i(f_i,x_{i+1})$ where $f_i\in G$ and the $w_i$ are bivariate polynomials. Since $G$ is abelian, any such polynomial can be taken in the form \[w_i(f_i,x_{i+1})=f_ix_{i+1}^{n_i}.\]

    Therefore the equations in the system are of the form $x_i=f_ix_{i+1}^{n_i}$. Setting $g_{i-1}:=f_i$ and introducing new variables $y_i:=f_i^{-1}x_i$ transforms the system into $y_i=(g_iy_{i+1})^{n_i}$. Since the latter, by assumption, admits a solution sequence $(a_i)_{i\ge1}$, then $w_i(f_i,x_{i+1})$ admits the solution sequence $(f_ia_i)_{i\ge1}$. Hence $G$ is Higman complete.

\medskip
    If $G$ is Higman complete, then letting $x_i=y_i$ and $f_i:=g_i^{n_i}$ the  system $y_i = (g_iy_{i+1})^{n_i}$ takes the form $x_i=f_ix_{i+1}^{n_i}$ and thus must have a solution sequence $(a_i)_{i\ge1}$ which is also solution sequence to the  system $y_i = (g_iy_{i+1})^{n_i}$.

\end{proof}

\begin{rmk}
    Herfort and Hojka showed that an abelian group is Higman complete if and only if it is cotorsion.  They also showed that the fundamental group of a space where every based homotopy class of loops has arbitrarily small representatives, such spaces are called small loop spaces, is Higman complete \cite[Theorems 3 \& 4]{HerfortHojka17}.  They proved that small loop spaces are Higman complete by finding small enough based representatives for the coefficients $f_i$.

    Here we will be considering equations with the coefficients coming from quotients of subgroups of the fundamental groups where every element has a representative which is a finite product of conjugates of arbitrarily small loops based at potentially different points, i.e., every coefficient has a representative in every Spanier subgroup of the fundamental group.  The difficulty is that one must keep track of a larger and larger set of base points at which the coefficients have small representatives, which is where Section \ref{construction} is needed.
\end{rmk}

\begin{lem}\label{lem: spanier small coset representatives}
    Let $X$ be a Peano continuum and endow $\pi_1(X,x_0)$ with the shape topology. Fix $H$ a subgroup of $\pi_1(X,x_0)$ and $Z$ a subset of the closure of $H$ in $\pi_1(X,x_0)$.  Then for any open cover $\mathcal U$ and any $z\in Z$, there exists an element $g\in \pi_1(X, x_0; \U)$ such that  $zH =gH$.
\end{lem}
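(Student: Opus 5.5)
The plan is to find an open neighbourhood $N$ of the identity that is contained in $\pi_1(X,x_0;\U)$. Once we have it, closure does the rest: since $z\in\overline H$, the open set $zN^{-1}$ (or $Nz$, depending on the side) meets $H$.

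First I would handle the reduction. By Lemma \ref{lem: factor} applied to the covering subgroup $K=\pi_1(X,x_0;\U)$, there is an $i$ with $\ker(\sigma_{i*})\leq \pi_1(X,x_0;\U)$. Such a $K$ is a covering subgroup because $X$ is a Peano continuum, hence locally path-connected. If one prefers to avoid this assertion, one can instead refine $\U$ to a finite cover by path-connected open sets and run the argument in the proof of Lemma \ref{lem: factor} directly. By Lemma \ref{lem:shape is shape}, the set $N=\ker(\sigma_{i*})=\sigma_{i*}^{-1}(\{1\})$ is open in the shape topology, and it is a subgroup.

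Next consider translates. Left and right translations are homeomorphisms of $\pi_1(X,x_0)$ in the shape topology: $\sigma_{i*}^{-1}(O)\cdot g=\sigma_{i*}^{-1}(O\,\sigma_{i*}(g))$, so translation maps subbasic sets to subbasic sets. Hence $zN$ is an open set containing $z$. Since $z\in\overline H$, there is some $h\in zN\cap H$, so $h=zn$ with $n\in N$.

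Then set $g=n^{-1}\in N\leq \pi_1(X,x_0;\U)$. We get $z=hn^{-1}=hg$. To obtain the identity $zH=gH$ as stated, I would instead use the open set $Nz$. This gives $h=nz$, hence $z=n^{-1}h$, and with $g=n^{-1}$ we have $zH=gH$.

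I expect the main obstacle to be the first step: justifying that the Spanier group of an arbitrary open cover contains some $\ker(\sigma_{i*})$. This requires replacing $\U$ by a finite refinement $\V$ of path-connected open sets with $\pi_1(X,x_0;2\V)\leq\pi_1(X,x_0;\U)$, by taking a Lebesgue-number refinement into sets whose pairwise unions of intersecting members lie in members of $\U$. After that, we cite the argument of Lemma \ref{lem: factor}.
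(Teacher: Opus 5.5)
Your proposal is correct and follows essentially the same route as the paper. You use Lemma \ref{lem: factor} to get $\ker(\sigma_{i*})\le\pi_1(X,x_0;\U)$, and then note that $\ker(\sigma_{i*})z=\sigma_{i*}^{-1}\bigl(\sigma_{i*}(z)\bigr)$ is an open neighbourhood of $z$ meeting $H$; this is exactly the paper's step $\sigma_{i*}(z)\in\sigma_{i*}(H)$, giving $z\in\ker(\sigma_{i*})\cdot H$. Your remarks on why $\pi_1(X,x_0;\U)$ is a covering subgroup (local path-connectedness, or a Lebesgue-number refinement) just make explicit a point the paper leaves implicit.
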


\begin{proof}
    Let $\{P_i, \sigma_{i,j}\}$  be a polyhedral expansion of $X$ with projection maps $\sigma_i: X \to X_i$.    By Lemma \ref{lem: factor}, there exists an $i$ such that $\ker(\sigma_{i*}) \subset \pi_1(X, x_0; \U)$. Since $Z$ is contained in the closure of $H$, we have that $\sigma_{i*}(Z) \subset \sigma_{i*}(H)$.  Thus $Z\cdot\ker(\sigma_{i*}) \subset  H \cdot\ker(\sigma_{i*})=  \ker(\sigma_{i*})\cdot H$. Hence $Z \subset \ker(\sigma_{i*})\cdot H\subset \pi_1(X, x_0; \U)\cdot H$.
\end{proof}

\begin{thm}\label{thm: higman complete}
    Let $X$ be a Peano continuum and $H$ a subgroup of $\pi_1(X,x_0)$ containing the commutator subgroup.  Then $\overline H/ H$ is cotorsion, where $\overline H$ is the closure of $H$ in the shape topology.
\end{thm}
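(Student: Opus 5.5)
Since $H$ contains the commutator subgroup, $\overline H/H$ is abelian (indeed $\overline H$ is a subgroup, being the closure of a subgroup in a space where multiplication is compatible with the subbasis of Lemma \ref{lem:shape is shape}). By the remark of Herfort and Hojka, an abelian group is cotorsion exactly when it is Higman complete. By the preceding lemma, it then suffices to solve every system
\[y_i = (g_i y_{i+1})^{n_i}, \qquad i\ge 1,\]
in $\overline H/H$, where $g_i\in \overline H/H$ and the $n_i$ are integers. We may assume each $n_i\neq 0$: if some $n_i=0$, set $y_i$ trivial and solve the shifted system for larger indices by the same argument, or simply discard the corresponding equation.

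Fix a polyhedral expansion $\{P_i,\sigma_{i,j}\}$ and a nullhomotopic space filling curve $\gamma$ at $x_0$; such a curve exists by Hahn–Mazurkiewicz concatenated with its reverse. Choose finite open covers $\U_i$ whose mesh tends to $0$, together with levels $k_i$ large enough that $\max_{j}\diam\bigl(\gamma|_{[(j-1)/2^{k_i},\,j/2^{k_i}]}\bigr)$ is also small. By Lemma \ref{lem: spanier small coset representatives}, each $g_i$ has a coset representative $[h_i]\in \pi_1(X,x_0;\U_i)$ with $h_i\in\overline H$. Lemma \ref{lem: right basepoints}, applied at dyadic level $i$ (after reindexing so that the levels increase), rewrites $h_i$ up to $=_C$ as a product of small loops based at the points $\gamma(j/2^i)$ interleaved with the pieces of $\gamma$. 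Since $\gamma$ is nullhomotopic and we only work modulo commutators, this gives
\[h_i=_C \stackrel[{|\bfs|\le i}]{}{\ast}\bigl(\eta_\bfs*{}_ib_\bfs*\overline{\eta_\bfs}\bigr),\]
with ${}_ib_\bfs$ based at $\gamma(\bfs)$ and of diameter at most $4l_i$, where $l_i\to 0$. The basepoint $\gamma(0)$ is absorbed into a neighbouring term. This is the same bookkeeping as in the proof of Lemma \ref{complete rel H_n}.

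Lemma \ref{higman solutions} then yields loops $\beta_i$ at $x_0$ with $\beta_i=_C (h_i*\beta_{i+1})^{n_i}$ and $[\beta_i]\to 1$ in the shape topology. Set $y_i=[\beta_i]H$. Because $H$ contains the commutator subgroup, $=_C$ descends to equality in $\pi_1/H$, so $y_i=(g_iy_{i+1})^{n_i}$ holds in $\pi_1(X,x_0)/H$.

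The main obstacle is showing that $[\beta_i]\in\overline H$, so that the $y_i$ actually lie in $\overline H/H$. My approach is as follows. Fix $m$. Because $[\beta_j]\to 1$, there is some $j$ with $\sigma_{m*}[\beta_j]=1$. Unwinding the equations from $i$ to $j$ writes $[\beta_i]$ modulo $C\le H$ as a word in the $[h_k]$, with $k<j$, and $[\beta_j]$. Since each $[h_k]\in\overline H$, we have $\sigma_{m*}[h_k]\in\sigma_{m*}(H)$. Because $\sigma_{m*}$ is a homomorphism, it follows that $\sigma_{m*}[\beta_i]\in\sigma_{m*}(H)$. As $m$ was arbitrary and the sets $\sigma_{m*}^{-1}(O)$ form a subbasis by Lemma \ref{lem:shape is shape}, every basic neighbourhood of $[\beta_i]$ meets $H$. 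One must check that finite intersections of subbasic sets reduce to a single level, which holds since the expansion is an inverse sequence. Hence $[\beta_i]\in\overline H$, and $(y_i)$ is a solution in $\overline H/H$, so $\overline H/H$ is Higman complete and therefore cotorsion.
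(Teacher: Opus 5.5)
Your proposal is correct and follows essentially the same route as the paper's proof: the Herfort--Hojka reduction to Higman completeness, small Spanier coset representatives, redistributing basepoints along a nullhomotopic space-filling curve via Lemma \ref{lem: right basepoints}, and then Lemma \ref{higman solutions}. Beyond what the paper writes, your unwinding argument proves $[\beta_i]\in\overline H$, which the paper only asserts, and you also handle zero exponents $n_i$ and the loop at $\gamma(0)$, which the paper leaves implicit.
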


\begin{proof}
    Let $\gamma$ be a nullhomotopic space filling curve.  Let $l_i = \max\limits_{1\leq j\leq 2^i} \diam\bigl(\im \bigl(\gamma|_{[\frac{j-1}{2^i}, \frac{j}{2^i}]}\bigr) \bigr)$.  Fix a system of equations $y_i = \bigl(g_i y_{i+1}\bigr)^{n_{i}}$ with $g_i \in \overline H/H $.

    By Lemma \ref{lem: spanier small coset representatives}, each coset $g_i$ can be represented by an element of the Spanier subgroup of a cover with mesh at most $l_i$. Hence, Lemma \ref{lem: right basepoints}, allows us to find loops ${}_ib_\bfs$ based at $\gamma(\bfs)$, for $|\bfs|\leq i$, with diameter at most $4l_i$ such that $g_i = [h_i]\cdot H $ where $h_i =_C \stackrel[{|\bfs|\leq i }]{}{\ast} (\iota_\bfs*{}_ib_\bfs*\overline\iota_\bfs)$

    By Lemma \ref{higman solutions}, there exists loops $\beta_i$ such that $[\beta_i]\in \overline H$ and
    \[\beta_i =_C\left(\stackrel[{|\bfs|\leq i}]{}{\ast}(\eta_{\bfs}*\bfsi*\overline{\eta_{\bfs}}) \right)^{n_i}*(\beta_{i+1})^{n_i}\]
    which implies that $[\beta_i]\cdot H = \Bigl(g_i \bigl([\beta_{i+1}]\cdot H\bigr)\Bigr)^{n_i}$.  In other words, $[\beta_i]\cdot H$ is a solution in $\overline H/H $ to the system of equations $y_i = \bigl(g_i y_{i+1}\bigr)^{n_{i}}$.
\end{proof}

  Recall the \v{C}ech homology of $X$ is   $\check H_1(X)= \varprojlim H_1(P_i)$ where $\{P_i, \sigma_{i,j}\}$ is a polyhedral expansion of $X$.

\begin{thm}\label{thm:short exact fundamental group}
  Suppose that $X$ is a Peano continuum.  If $C$ is the commutator subgroup of $\pi_1(X,x_0)$, then we have the following short exact sequence of groups,
  \[0\to \overline C \to \pi_1(X,x_0) \to \check H_1(X) \to 0,\]  where $\overline C$ is the closure of $C$ in the shape topology.
\end{thm}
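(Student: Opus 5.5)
The plan is to define the map $\Phi\colon \pi_1(X,x_0)\to \check H_1(X)=\varprojlim H_1(P_i)$ as the composite of $\partial\colon\pi_1(X,x_0)\to\check\pi_1(X,x_0)$ with the limit of the Hurewicz maps $h_i\colon\pi_1(P_i,p_i)\to H_1(P_i)$. Concretely, $\Phi(g)=\bigl(h_i\sigma_{i*}(g)\bigr)_i$, which is compatible because the $h_i$ are natural. I would then prove three things: $\ker\Phi=\overline C$, $\Phi$ is surjective, and $\overline C\to\pi_1(X,x_0)$ is the inclusion. The last is trivial, so the work is in the first two.

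For the kernel, I would use Lemma \ref{lem:shape is shape}. Since the sets $\sigma_{i*}^{-1}(O)$ form a subbasis, and the basic neighborhoods of $g$ can be taken nested as $i$ increases (by compatibility of the $\sigma_{i,j}$), we have $g\in\overline C$ if and only if $\sigma_{i*}(g)\in\sigma_{i*}(C)$ for every $i$. After fixing the expansion so that each $\sigma_{i*}$ is surjective (as in the proof of Lemma \ref{lem: continuous lift}), $\sigma_{i*}(C)$ is exactly the commutator subgroup of $\pi_1(P_i,p_i)$, i.e.\ $\ker h_i$. Hence $g\in\overline C$ if and only if $h_i\sigma_{i*}(g)=0$ for all $i$, which is $\ker\Phi$. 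If one prefers not to assume surjectivity, the identity to check is $\sigma_{i*}(C)=[\sigma_{i*}\pi_1,\sigma_{i*}\pi_1]$. The kernel computation then becomes "$\sigma_{i*}(g)$ lies in the commutator subgroup of the image". This is still the correct condition provided $\check H_1$ is computed with the images, and one can pass to such an expansion without changing $\check H_1$.

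For surjectivity, I would use the pro-covering machinery of the previous section, reproving Eda--Kawamura, or simply cite them as the introduction suggests. Take $(a_i)\in\varprojlim H_1(P_i)$ and choose $g_i\in\pi_1(X,x_0)$ with $h_i\sigma_{i*}(g_i)=a_i$, using surjectivity of $\sigma_{i*}$ and of $h_i$. Set $K_i=C\cdot\ker(\sigma_{i*})$. Then $\pi_1/K_i\cong H_1(P_i)$ via $h_i\sigma_{i*}$, because $\sigma_{i*}^{-1}(\ker h_i)=C\ker\sigma_{i*}$ when $\sigma_{i*}$ is onto. Compatibility of $(a_i)$ says precisely that the cosets $K_ig_i$ are descending. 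Lemma \ref{complete rel H_n}, applied with $H=C$, gives an element $g\in\bigcap_i K_ig_i$, and then $\Phi(g)=(a_i)$.

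The main obstacle I expect is the bookkeeping with the polyhedral expansion. One needs the projections to induce surjections on $\pi_1$, or else a justification that $\check H_1$ and the relevant coset descriptions are unchanged when $P_i$ is replaced by suitable subcomplexes or expansions. One also needs the descending-coset identification $\sigma_{i*}^{-1}(\ker h_i)=K_i$. Once an expansion with surjective $\sigma_{i*}$ is fixed (which exists for Peano continua, as already used in Lemma \ref{lem: continuous lift}), the argument is short: the completeness statement of Lemma \ref{complete rel H_n} does the essential infinite-word work.
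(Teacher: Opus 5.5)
Your argument is correct. It rests on the same two facts as the paper's proof: $\overline C=\bigcap_i C\ker\sigma_{i*}$, because $\{\ker\sigma_{i*}\}$ is a neighborhood basis of the identity, and Lemma~\ref{complete rel H_n} with $H=C$. The difference is in packaging.

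The paper builds the pro-covering $E$ from the coverings corresponding to $C\ker\sigma_{i*}$. It then quotes the exact sequence $0\to\pi_1(E)\to\pi_1(X,x_0)\to\varprojlim\pi_1(P_i)/\pi_1(\tilde P_i)\to\pi_0(E)\to 0$ from \cite{ConnerHerfortKentPavesic2021}. Surjectivity comes from $\pi_0(E)=0$, i.e.\ from path-connectedness of $E$, which in turn comes from Lemma~\ref{complete rel H_n} via Theorem~\ref{complete imlplies path-connected}.

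You observe instead that completeness relative to $(C\ker\sigma_{i*})$ is literally surjectivity of $\pi_1(X,x_0)\to\varprojlim\pi_1(X,x_0)/C\ker\sigma_{i*}$, so neither external result is needed. The paper's detour buys the geometric by-product that the associated pro-covering is path-connected with fundamental group $\overline C$. Your version also makes explicit where surjectivity of $\sigma_{i*}$ is used, namely the identification $\pi_1(X,x_0)/C\ker\sigma_{i*}\cong H_1(P_i)$, which the paper leaves tacit.

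To justify fixing an expansion with surjective $\sigma_{i*}$, you only need two things. First, $\check H_1(X)$, together with the map from $\pi_1(X,x_0)$, is independent of the polyhedral expansion up to canonical isomorphism (standard shape theory). Second, $\overline C$ is intrinsic by Definition~\ref{defn:shape topology}. Your aside about computing $\check H_1$ ``with the images'' is unnecessary and would need its own justification.
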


\begin{proof}
    Let $\{P_i,\sigma_{i,j}\}$ be a polyhedral expansion of $X$.  Let $E_i$ be the covering space of $X$ corresponding to the subgroup $C\cdot \ker(\sigma_{i*})$ and $E$ be the corresponding pro-covering.  Then $\pi_1(E,e_0) = \cap_i C\cdot\ker(\sigma_{i*})$.  Since $\ker(\sigma_{i*})$ is a neighborhood basis of the identity,  $\cap_i C\cdot\ker(\sigma_{i*})=\overline C$.  Let $\tilde P_i$ be the covering space corresponding to the commutator subgroup of $\pi_1(P_i, p_i)$.   By the discussion preceding Theorem 3.7 in \cite{ConnerHerfortKentPavesic2021}, we have the following exact sequence of groups
    \[0\to \pi_1(E)\to \pi_1(X,x_0) \to \varprojlim\pi_1(P_i)/\pi_1(\tilde P_i) \to \pi_0(E) \to 0,\]

    \noindent where following the conventions of \cite{ConnerHerfortKentPavesic2021} we have omitted  basepoints and identified the fundamental group of a covering space with its image in the fundamental group of the base space.

    Since $\pi_1(\tilde P_i, \tilde p_i)$ is the commutator subgroup of $\pi_1(P_i, p_i)$, we have $\pi_1(P_i)/\pi_1(\tilde P_i) = H_1(P_i, \Z)$.  Hence, the inverse limit term is the  Cech homology of $X$.  Since $E$ is path-connected, $\pi_0(E)$ is trivial and $\pi_1(X)$ maps surjectively onto $\check H_1(X,\Z)$.  This completes the proof.
\end{proof}

Abelianizing this short exact sequence gives us the following restatement of Theorem \ref{thm:short exact fundamental group} in terms of the first homology.

\begin{thm}\label{thm:short exact homology}
  Suppose that $X$ is a Peano continuum and $C$ is the commutator subgroup of $\pi_1(X,x_0)$.  Then we have the following short exact sequence of groups,
  \[0\to \overline C/C \to H_1(X,\Z) \to \check H_1(X) \to 0\]
  and $\overline C/C$ is a cotorsion group.  
\end{thm}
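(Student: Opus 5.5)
The plan is to obtain the sequence by abelianizing the short exact sequence of Theorem \ref{thm:short exact fundamental group}, and to read off the cotorsion claim from Theorem \ref{thm: higman complete} applied with $H=C$.

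First, since $C\le \overline C$ (closures contain the set) and $C$ is normal in $\pi_1(X,x_0)$, the quotient $\overline C/C$ is a subgroup of $\pi_1(X,x_0)/C = H_1(X,\Z)$, identified via the Hurewicz isomorphism. The surjection $\pi_1(X,x_0)\to \check H_1(X)$ of Theorem \ref{thm:short exact fundamental group} has abelian target. Hence it kills $C$ and factors through a surjection $H_1(X,\Z)\to \check H_1(X)$. Its kernel is the image of the original kernel $\overline C$, namely $\overline C/C$. This uses only the third isomorphism theorem: since $C\le \overline C$, we have
\[
\pi_1(X,x_0)/\overline C\cong \bigl(\pi_1(X,x_0)/C\bigr)\big/\bigl(\overline C/C\bigr).
\]
Exactness at each of the three spots is then immediate. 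Injectivity on the left holds because $\overline C/C$ is included as a subgroup, exactness in the middle is the kernel computation just given, and surjectivity on the right is inherited from Theorem \ref{thm:short exact fundamental group}.

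For the cotorsion statement, I apply Theorem \ref{thm: higman complete} with $H=C$, the commutator subgroup, which trivially contains itself. This gives that $\overline C/C$ is cotorsion, where $\overline C$ is the closure in the shape topology, consistent with Definition \ref{defn:infinite commutator subgroup}.

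I do not expect a real obstacle, since all the substantive work (the surjectivity onto $\check H_1(X)$ and Higman completeness) is already done earlier. The only point to check is that the map in Theorem \ref{thm:short exact fundamental group} is the natural one, induced by $\pi_1(X)\to\pi_1(P_i)\to H_1(P_i)$. This ensures that the induced map $H_1(X)\to\check H_1(X)$ is the canonical comparison map, which one would want to record for later use in the splitting theorem.
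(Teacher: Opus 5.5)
Your proposal is correct and matches the paper's route: abelianize the short exact sequence of Theorem \ref{thm:short exact fundamental group} (via the Hurewicz identification $H_1(X,\Z)\cong\pi_1(X,x_0)/C$ and the third isomorphism theorem, using $C\le\overline C$), and get the cotorsion claim from Theorem \ref{thm: higman complete} with $H=C$. Your remark that the map is the canonical comparison map induced by $\pi_1(X)\to\pi_1(P_i)\to H_1(P_i)$ is a sensible addition, since the splitting theorem relies on it.
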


If $\check H_1(X)$ is torsion-free, then the sequence splits since the kernel is cotorsion by Theorem \ref{thm: higman complete}.

\begin{thm}\label{thmm: splitting}
  If $X$ is a Peano continuum  with $\check H_1(X)$ torsion-free, then the first singular homology of $X$ splits as $H_1(X) = \check H_1(X)\oplus K$, where $K$ is the first homology shape kernel of $X$, a cotorsion group.
\end{thm}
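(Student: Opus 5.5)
The plan is to read the splitting off from the short exact sequence of Theorem \ref{thm:short exact homology},
\[0\to \overline C/C \to H_1(X,\Z) \to \check H_1(X) \to 0,\]
where $C$ is the commutator subgroup of $\pi_1(X,x_0)$, together with the definition of cotorsion (Definition \ref{defn: cotorsion}). First I would identify the kernel $K=\overline C/C$ as the first homology shape kernel. Under Hurewicz, $H_1(X)=\pi_1(X,x_0)/C$, and the map $H_1(X)\to\check H_1(X)=\varprojlim H_1(P_i)$ is induced by the projections $\sigma_i$. Its kernel consists of the classes $[\alpha]C$ with $\sigma_{i*}[\alpha]\in[\pi_1(P_i),\pi_1(P_i)]$ for every $i$. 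Since $\sigma_{i*}$ need not be surjective, I would reduce to a polyhedral expansion with $\sigma_{i*}$ surjective (as in the proof of Lemma \ref{lem: continuous lift}). Then $[\alpha]\in\bigcap_i C\cdot\ker(\sigma_{i*})=\overline C$, the last equality being the one already used in the proof of Theorem \ref{thm:short exact fundamental group}. So the shape kernel is exactly $\overline C/C$, and exactness at the middle and on the right is what Theorem \ref{thm:short exact homology} provides.

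Second, $K=\overline C/C$ is cotorsion: this is Theorem \ref{thm: higman complete} applied with $H=C$, which contains the commutator subgroup. Third, since $\check H_1(X)$ is torsion-free by hypothesis, Definition \ref{defn: cotorsion} (via $\operatorname{Ext}(\mathbb Q,K)=0$, equivalently every extension of $K$ by a torsion-free group splits) shows the sequence splits. This gives $H_1(X)\cong \check H_1(X)\oplus K$.

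The only point needing care is the last step. One must confirm that the Ext-characterization of cotorsion yields splitting for every torsion-free quotient, not just for $\mathbb Q$. This is the standard fact (Fuchs, Chapter 9) that $\operatorname{Ext}(T,K)=0$ for all torsion-free $T$ once $\operatorname{Ext}(\mathbb Q,K)=0$, and it is already built into the cited definition. I would simply invoke it rather than reprove it.
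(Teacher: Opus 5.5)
Your proposal is correct and matches the paper's argument: the paper reads the splitting off the short exact sequence $0\to \overline C/C\to H_1(X)\to \check H_1(X)\to 0$ of Theorem \ref{thm:short exact homology}, notes that the kernel is cotorsion by Theorem \ref{thm: higman complete} (with $H=C$), and concludes that a torsion-free quotient forces the sequence to split. Your extra check that $\overline C/C$ is exactly the kernel of $H_1(X)\to\check H_1(X)$ (via an expansion with surjective $\sigma_{i*}$ and $\bigcap_i C\cdot\ker(\sigma_{i*})=\overline C$) only makes explicit what the paper takes from Theorem \ref{thm:short exact homology} as the identification of the homology shape kernel.
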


Note, if $X$ has a polyhedral expansion $\{P_i\}$ such that $H_1(P_i)$ is torsion-free, then $\check H_1(X)$ is torsion-free.

\begin{cor}\label{cor: homology splitting}
    If $X\subset \mathbb R^3$ is a Peano continuum, then $H_1(X) = \mathbb Z^\lambda\oplus K$, where $K$ is the first homology shape kernel of $X$, a cotorsion group, and $\lambda$ is a countable cardinal.

\end{cor}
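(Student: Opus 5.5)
The plan is to derive Corollary \ref{cor: homology splitting} from Theorem \ref{thmm: splitting}. The work splits into three checks: $\check H_1(X)$ is torsion-free, it is isomorphic to $\mathbb Z^\lambda$, and $\lambda$ is countable.

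\emph{Torsion-freeness.} Since $X$ is compact, choose a neighborhood basis of $X$ in $\mathbb R^3$ consisting of nested compact polyhedra $N_1\supset N_2\supset\cdots$ with $X=\bigcap_i N_i$. Each $N_i$ is a compact $3$-dimensional PL submanifold with boundary, or is obtained as the closure of a small regular neighborhood. Because $\{N_i\}$ is cofinal among neighborhoods of $X$, it gives a polyhedral (ANR) expansion of $X$, and the \v Cech homology can be computed from it: $\check H_1(X)=\varprojlim H_1(N_i)$. By Alexander duality in $S^3$, $H_1(N_i)\cong \tilde H^1(S^3\setminus N_i)$. The group $S^3\setminus N_i$ has the homotopy type of a finite complex, so $H^1$ of it is $\operatorname{Hom}(H_1,\mathbb Z)$, which is free of finite rank. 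Hence each $H_1(N_i)$ is finitely generated free abelian. An inverse limit of torsion-free groups is torsion-free, so $\check H_1(X)$ is torsion-free. (This is the remark following Theorem \ref{thmm: splitting}, once one observes that the expansion can be taken with $P_i=N_i$.) Theorem \ref{thmm: splitting} then yields $H_1(X)=\check H_1(X)\oplus K$ with $K$ the homology shape kernel, which is cotorsion.

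\emph{Identifying $\check H_1(X)$ with $\mathbb Z^\lambda$.} This is the main obstacle, because an inverse limit of free groups of finite rank need not be free; the $p$-adic integers are an example. I would use duality again. By \v Cech–Alexander duality, $\check H_1(X)\cong\varprojlim \tilde H^1(S^3\setminus N_i)\cong \operatorname{Hom}(\varinjlim H_1(S^3\setminus N_i),\mathbb Z)=\operatorname{Hom}(A,\mathbb Z)$, where $A=H_1(S^3\setminus X)$ is a countable abelian group, since $S^3\setminus X$ is an open $3$-manifold. So $\check H_1(X)$ is the dual of a countable group. Alternatively, one can compute it directly: $\check H_1(X)$ is the image of $H_1(X)$ by Theorem \ref{thm:short exact homology}, and Eda's results for planar and $\mathbb R^3$ Peano continua show that the image is a subgroup of a product of copies of $\mathbb Z$ in which every countable subgroup is free. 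I would try the duality route first. The plan is to show that $\operatorname{Hom}(A,\mathbb Z)$ is either free of finite rank or isomorphic to $\mathbb Z^{\mathbb N}$, using the local connectedness of $X$: local connectedness forces the bonding maps to stabilize in a controlled way, so that $\check H_1(X)$ is a countable product of its finite-rank stages. If that fails, I would settle for the weaker statement that $\check H_1(X)$ is a dual group of the form $\mathbb Z^\lambda$, with $\lambda$ the rank data.

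\emph{Countability of $\lambda$.} This follows from the expansion being a sequence of finitely generated groups: a product $\mathbb Z^\lambda$ that arises as the limit of countably many finite-rank stages has at most countably many independent coordinates, so $\lambda\le\aleph_0$.
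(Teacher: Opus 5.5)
Your torsion-freeness step is correct and matches the paper. Alexander duality makes each $H_1(N_i)$ free of finite rank, so $\check H_1(X)$ is torsion-free and Theorem~\ref{thmm: splitting} applies. Your choice of compact polyhedral neighborhoods is, if anything, cleaner than the paper's closed $1/i$-neighborhoods.

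\textbf{The gap is the identification $\check H_1(X)\cong\mathbb Z^\lambda$.} You flag it as the main obstacle but never prove it.
\begin{itemize}
\item The duality isomorphism $\check H_1(X)\cong\operatorname{Hom}(H_1(S^3\setminus X),\mathbb Z)$ is correct, but it only restates the problem.
\item The sentence about local connectedness making the bonding maps ``stabilize in a controlled way'' is a gesture, not an argument.
\item The fallback of settling for ``a dual group of the form $\mathbb Z^\lambda$'' simply assumes the conclusion.
\item The Eda-style alternative cannot work. Every subgroup of $\mathbb Z^{\mathbb N}$ has all its countable subgroups free (Specker), including free groups of rank $2^{\aleph_0}$ such as the group of bounded sequences. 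So that property cannot single out $\mathbb Z^\lambda$ with $\lambda$ countable.
\item Your countability paragraph presupposes a product decomposition indexed by the stages, which is exactly what is missing.
\end{itemize}

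\textbf{Your motivating example is also wrong.} The $p$-adic integers are not an inverse limit of finitely generated free abelian groups. For instance, $\varprojlim(\mathbb Z\xleftarrow{p}\mathbb Z\xleftarrow{p}\cdots)=0$. More decisively, any such limit embeds in $\prod_i F_i\cong\mathbb Z^{\mathbb N}$, which has no nonzero element divisible by all powers of a prime $q\neq p$, whereas every element of $\mathbb Z_p$ is.

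\textbf{The missing step is elementary and needs neither duality nor local connectedness.}
\begin{enumerate}
\item Put $G=\varprojlim H_1(N_i)$, and let $G_i\subset H_1(N_i)$ be the image of the projection $G\to H_1(N_i)$.
\item Each $G_i$ is free of finite rank, being a subgroup of a finitely generated free abelian group.
\item The bonding maps restrict to surjections $G_{i+1}\to G_i$, because the projection to $H_1(N_i)$ factors through $H_1(N_{i+1})$.
\item $G=\varprojlim G_i$, as sets of compatible sequences.
\item Surjections onto free groups split, so choosing sections gives
\[
G\cong G_1\times\prod_{i\ge 1}\ker(G_{i+1}\to G_i),
\]
a product of countably many copies of $\mathbb Z$.
\end{enumerate}
This gives $\check H_1(X)\cong\mathbb Z^\lambda$ with $\lambda\le\aleph_0$ in one stroke, which also settles countability. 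It is essentially the ``exercise'' with which the paper's proof ends, once $H_1(P_i)$ is known to be finitely generated free.
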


\begin{proof}
    Let $P_i$ be the closed $(1/i)$-neighborhood of $X$ in $\mathbb R^3$.  Using Alexander duality and the fact that the first Cech cohomology group is torsion-free, we can see that $H_1(P_i, \mathbb Z)$ is torsion free.  Hence $H_1(P_i, \mathbb Z)$ is a finitely generated, free abelian group.  If the rank of $H_1(P_i)$ is bounded, then $H_1(X,\Z)= \Z^m$ for some $m\in\N$.  If the rank of $H_1(P_i)$ is unbounded, then it is an exercise to see that $\check H_1(X, \mathbb Z)= \Z^\N$.
\end{proof}


\section{A criterion for non path-connected pro-coverings}

\begin{lem}\label{lem: not path-connected}
    Let $X$ be a Peano continuum and suppose that $E = \varprojlim\{E_i, \nu_{i,j}\}$ is a pro-covering of $X$.  If there exists a continuous map $f:X \to P$ to a countable CW-complex $P$ such that $\rho_{i*}\bigl(\pi_1(E_i, e_i)\bigr)\cdot \ker(f_*)$ is not eventually constant, then $E$ is not path-connected.
\end{lem}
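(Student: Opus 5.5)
We argue by contraposition through Theorem \ref{complete imlplies path-connected}: assuming $E$ is path-connected, $\pi_1(X,x_0)$ is complete relative to $(H_n)$ with $H_n=\rho_{n*}\bigl(\pi_1(E_n,e_n)\bigr)$. We then show that the non-stabilizing sequence $H_n\cdot\ker(f_*)$ forces a descending sequence of cosets $(H_ng_n)$ with empty intersection, contradicting completeness. We may assume $f(x_0)=p_0$ and that $P$ is path-connected, replacing $P$ by the component containing $f(X)$.

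Set $G=\pi_1(X,x_0)$, $N=\ker(f_*)$ and $L_n=H_nN$. The subgroups $L_n$ form a descending chain of subgroups containing $N$. Since $f_*$ induces an injection of coset spaces $G/L_n\hookrightarrow \pi_1(P,p_0)/f_*(H_n)$, and $\pi_1(P,p_0)$ is countable because $P$ is a countable CW-complex, each index $[G:L_n]$ is countable. Passing to a subsequence, we may assume $L_{n+1}\subsetneq L_n$ for all $n$. (Passing to a subsequence of $(H_n)$ does not affect completeness, since a descending coset chain along a subsequence extends to the full sequence.)

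The key step is to choose a descending chain of cosets $L_ng_n$ whose intersection is empty. This uses countability and the strict decrease, via a diagonal argument. Enumerate the elements of $f_*(G)\subset\pi_1(P,p_0)$ as $q_1,q_2,\dots$. Each $L_n$ is a union of $N$-cosets, so membership in $L_ng$ depends only on $f_*(g)$. We choose $g_{n+1}\in L_ng_n$ inductively so that $f_*(L_{n+1}g_{n+1})$ avoids $q_n$. This is possible because $L_ng_n$ splits into at least two $L_{n+1}$-cosets, and their $f_*$-images are disjoint (each being a union of cosets of $f_*(H_{n+1})$ pulled back through $N$). Hence at most one of these cosets has image containing $q_n$, and we pick another.

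Then $\bigcap_n L_ng_n$ is empty: any $g$ in it has $f_*(g)=q_m$ for some $m$, but $q_m\notin f_*(L_{m+1}g_{m+1})$. Since $H_ng_n\subset L_ng_n$ and $H_{n+1}g_{n+1}\subset H_ng_n$ (because $g_{n+1}\in L_ng_n$ need only be adjusted by an element of $N$), the chain $(H_ng_n)$ can be made descending with empty intersection. The main obstacle is exactly this last point. Descent of the $L$-cosets does not immediately give descent of the $H$-cosets, since $g_{n+1}g_n^{-1}$ lies in $H_nN$ rather than in $H_n$. To handle it, we will choose the representatives $g_{n+1}$ inductively inside $H_ng_n$ itself: every $L_n$-coset meeting $L_ng_n$ equals $H_ng_nN$, and its $L_{n+1}$-subcosets are all reached by elements of $H_ng_n$, because $H_ng_n\cdot N$ covers $L_ng_n$ and right multiplication by $N$ preserves $L_{n+1}$-cosets when $N$ is normal. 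Normality holds since $N$ is a kernel. With this adjustment the coset chain descends, completeness fails, and $E$ is not path-connected.
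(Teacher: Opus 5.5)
Your argument is correct, and it has the same architecture as the paper's proof. Both first use countability to produce a descending coset chain ``downstairs'' with empty intersection, and then lift it to a descending chain of $H_n$-cosets. You carry out each step differently, though.

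The paper pushes forward to $P$. With $K_i=f_*(H_i)$, it cites Corollary 2.3 of \cite{ConnerHerfortKentPavesic2021} to conclude that $\varprojlim \tilde P_i$ is not path-connected. This yields a nested chain $K_ig_i$ in $\pi_1(P,p_0)$ with empty intersection. The paper translates this chain by $g_1^{-1}$ and then lifts it by choosing $h_i\in H_{i-1}$ with $f_*(h_i)=g_ig_{i-1}^{-1}$.

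You instead stay in $G$ and work with $L_n=H_nN=f_*^{-1}\bigl(f_*(H_n)\bigr)$. You prove the countability step directly, by a diagonal argument over an enumeration of $f_*(G)$. Your choice of $g_{n+1}\in H_ng_n$ inside a prescribed $L_{n+1}$-coset is exactly the paper's lifting step. It is justified by $L_ng_n=H_ng_nN$ together with normality of $N$.

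Your version is self-contained and makes visible what is really used: normality of $N$ and countability of $G/N$. The paper's version is shorter because it delegates the diagonal argument to the cited corollary.

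Two expository points for a write-up:
\begin{itemize}
\item The parenthetical justification of $H_{n+1}g_{n+1}\subset H_ng_n$ in your penultimate paragraph does not hold for an arbitrary $g_{n+1}\in L_ng_n$. Build the choice $g_{n+1}\in H_ng_n$ into the induction from the start, beginning with $g_1=1$. Note explicitly that the avoidance condition $q_n\notin f_*(L_{n+1}g_{n+1})$ depends only on the $L_{n+1}$-coset of $g_{n+1}$.
\item The countability of $[G:L_n]$ and the injection of coset spaces are not needed. Only countability of $f_*(G)$ is used.
\end{itemize}
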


\begin{proof}
    Let $H_i = \rho_{i*}\bigl(\pi_1(E_i, e_i)\bigr)$,  $K_i = f_*\circ \rho_{i*}\bigl(\pi_1(E_i, e_i)\bigr)$, and $\theta_i: \tilde P_i \to P$ be the covering space of $P$ with $\theta_{i*}\bigl(\pi_1(\tilde P_i,p_i)\bigr) = K_i$.  Notice that if $\rho_{i*}\bigl(\pi_1(E_i, e_i)\bigr)\cdot \ker(f_*)$ is not eventually constant, then $K_i$ is not eventually constant.  Thus Corollary 2.3 of \cite{ConnerHerfortKentPavesic2021} implies that $\varprojlim \tilde P_i$ is not path-connected. Since $\varprojlim \tilde P_i$ is not path-connected, there exists a nested sequence of cosets $K_ig_i$ such that $\bigcap\limits_i K_ig_i= \emptyset$.  Thus $K_i(g_ig_1^{-1})$ is also a nested sequence of cosets such that $\bigcap\limits_i K_i(g_ig_1^{-1})= \emptyset$.  Notice that $g_ig_{i-1}^{-1} \in K_{i-1}$ for $i\geq 2$.  Choose $h_i \in H_{i-1}$ such that $f_*(h_i) = g_ig_{i-1}^{-1}$.  This gives a nested sequence of cosets $H_i(h_i\cdots h_2)$ in $\pi_1(X,x_0)$ such that $f_*\bigl(H_i(h_i\cdots h_2)\bigr) = K_{i}(g_ig_1^{-1})$.  Since $\bigcap\limits_i K_{i}(g_ig_1^{-1})=\emptyset$, we have $\bigcap\limits_i H_i(h_i\cdots h_2) = \emptyset$ and $E$ is not path-connected.
\end{proof}

\begin{cor}\label{cor: not path-connected}
Let $X$ be a Peano continuum and suppose that $E = \varprojlim\{E_i, \nu_{i,j}\}$ is a pro-covering of $X$.  If there exists an open subgroup $K$ of $\pi_1(X,x_0)$ such that $\rho_{i*}\bigl(\pi_1(E_i, e_i)\bigr)\cdot K$ is not eventually constant, then $E$ is not path-connected.
\end{cor}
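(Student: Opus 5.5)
The plan is to reduce the corollary to Lemma \ref{lem: not path-connected} by producing a map $f\colon X\to P$ into a countable CW-complex whose induced homomorphism has kernel contained in $K$ and controlling $K$ in the right way.

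\textbf{Step 1: from an open subgroup to a polyhedral projection.} Fix a polyhedral expansion $\{P_i,\sigma_{i,j}\}$ of $X$. Since $K$ is an open subgroup, it is a neighborhood of the identity. By Lemma \ref{lem:shape is shape}, some finite intersection of subbasic sets $\sigma_{i*}^{-1}(O)$ containing the identity lies in $K$. Because the expansion is an inverse sequence, finitely many indices can be pushed to a single large index $m$. Taking $O=\{1\}$ then gives
\[\ker(\sigma_{m*})\leq K.\]
Hence $K$ is a union of cosets of $\ker(\sigma_{m*})$, and so $K=\sigma_{m*}^{-1}\bigl(\sigma_{m*}(K)\bigr)$.

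\textbf{Step 2: pass to a covering of $P_m$.} Let $L=\sigma_{m*}(K)\leq \pi_1(P_m,p_m)$, and let $q\colon (\tilde P,\tilde p)\to (P_m,p_m)$ be the covering space with $q_*\bigl(\pi_1(\tilde P,\tilde p)\bigr)=L$. Since $P_m$ is a finite polyhedron, $\tilde P$ is a countable CW-complex.

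We need a map $f\colon X\to \tilde P$ with $\ker(f_*)$ equal to $K$, or at least with the same products $H_i\cdot\ker(f_*)$. Lifting $\sigma_m$ to $\tilde P$ requires $\sigma_{m*}(\pi_1(X))\leq L$, which generally fails. So instead of lifting, use $f=\sigma_m$ together with the pulled-back covering $E'$ of $X$ (the pullback of $q$ along $\sigma_m$). Its subgroup is exactly $\sigma_{m*}^{-1}(L)=K$. This is precisely the setup in the proof of Lemma \ref{lem: not path-connected}, with $K_i=\sigma_{m*}(H_i)$, where $H_i=\rho_{i*}(\pi_1(E_i,e_i))$.

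\textbf{Step 3: verify the hypothesis.} We have
\[H_i\cdot K=\sigma_{m*}^{-1}\bigl(\sigma_{m*}(H_i)\,L\bigr)\]
and
\[H_i\cdot\ker(\sigma_{m*})=\sigma_{m*}^{-1}\bigl(\sigma_{m*}(H_i)\bigr).\]
If $H_i\cdot\ker(\sigma_{m*})$ were eventually constant, the sequence $\sigma_{m*}(H_i)$ would be eventually constant, and hence so would $H_i\cdot K$. Taking the contrapositive, the hypothesis that $H_i\cdot K$ is not eventually constant implies that $H_i\cdot\ker(\sigma_{m*})$ is not eventually constant.

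Now apply Lemma \ref{lem: not path-connected} with $f=\sigma_m$ and $P=P_m$, which is a finite, hence countable, CW-complex. This shows that $E$ is not path-connected.

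\textbf{Main obstacle.} The delicate point is Step 1: extracting $\ker(\sigma_{m*})\leq K$ from openness. Openness directly gives only $\bigcap_{k}\sigma_{i_k*}^{-1}(O_k)\subseteq K$ around the identity. One must then check that for $m\geq i_k$, the subgroup $\ker(\sigma_{m*})$ lies in each $\ker(\sigma_{i_k*})$, since $\sigma_{i_k}=\sigma_{m,i_k}\circ\sigma_m$, and hence in each $\sigma_{i_k*}^{-1}(O_k)$. This containment holds, so Step 1 goes through. The remaining steps are routine.
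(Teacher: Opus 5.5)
Your argument is correct and is essentially the paper's proof: you find a projection $\sigma_m$ with $\ker(\sigma_{m*})\leq K$, you note that eventual constancy of $H_i\cdot\ker(\sigma_{m*})$ would force eventual constancy of $H_i\cdot K=H_i\cdot\ker(\sigma_{m*})\cdot K$, and you apply Lemma \ref{lem: not path-connected} with $f=\sigma_m$. The differences are cosmetic: you get $\ker(\sigma_{m*})\leq K$ directly from the subbasis in Lemma \ref{lem:shape is shape}, which is cleaner than the paper's appeal to Lemma \ref{lem: factor} since that lemma is stated for covering subgroups. Also, your Step 2 (the covering $\tilde P$ and the pulled-back covering) is never used and can be deleted.
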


\begin{proof}
  By Lemma \ref{lem: factor}, there exists a continuous map $f: X\to P$ such that $\ker(f_*)\leq K$, where $P$ is a term in some polyhedral expansion of $X$.  Since $\rho_{i*}\bigl(\pi_1(E_i, e_i)\bigr)\cdot K$ is not eventually constant, $\rho_{i*}\bigl(\pi_1(E_i, e_i)\bigr)\cdot \ker(f_*)$ is not eventually constant.  The result then follows from Lemma \ref{lem: not path-connected}.
\end{proof}

\begin{thm}\label{thm: characterization in terms of maps}
  Let $X$ be a Peano continuum and $E = \varprojlim\{E_i, \nu_{i,j}\}$ be a pro-covering of $X$ such that $\rho_{i*}\bigl(\pi_1(E_i,e_i)\bigr)$ contains the commutator subgroup of $\pi_1(X,x_0)$.  Then $E$ is path-connected if and only if the sequence of subgroups $\rho_{i*}\bigl(\pi_1(E_i,e_i)\bigr)\cdot \ker{(f_*)}$ is eventually constant for every continuous map $f:X \to P$ to a finite polyhedral complex.
\end{thm}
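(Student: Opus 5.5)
We prove the two implications separately. Write $H_i=\rho_{i*}\bigl(\pi_1(E_i,e_i)\bigr)$; these form a descending sequence, each containing the commutator subgroup $C$. Throughout, fix a polyhedral expansion $\{P_j,\sigma_{j,k}\}$ of $X$.

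\emph{($\Rightarrow$)} Argue by contraposition. Suppose some map $f\colon X\to P$ to a finite polyhedral complex makes $H_i\cdot\ker(f_*)$ fail to be eventually constant. A finite polyhedral complex is in particular a countable CW-complex, so Lemma \ref{lem: not path-connected} applies verbatim and shows $E$ is not path-connected. This direction needs nothing beyond the earlier lemma.

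\emph{($\Leftarrow$)} Suppose $H_i\cdot\ker(f_*)$ is eventually constant for every such $f$. Apply this to each projection $\sigma_j\colon X\to P_j$. For each $j$ we get an index $m_j$ and a subgroup $L_j$ with $H_i\cdot\ker(\sigma_{j*})=L_j$ for all $i\ge m_j$. Set $L=\bigcap_i H_i$. By Theorem \ref{complete imlplies path-connected}, it suffices to show $\pi_1(X,x_0)$ is complete relative to $(H_i)$. Take a descending sequence of cosets $H_ig_i$; we must produce an element lying in every $H_ig_i$.

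The plan is to reduce to Lemma \ref{complete rel H_n}, which says $\pi_1(X,x_0)$ is complete relative to $K_j=H'\cdot\ker(\sigma_{j*})$ whenever $H'\supseteq C$.

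1. Take $H'=\bigcap_i H_i$, which contains $C$.
2. Compare $K_j=H'\cdot\ker(\sigma_{j*})$ with $L_j$. The inclusion $K_j\subseteq L_j$ is clear; we need equality.
3. Each $H_i$ is a covering subgroup (the $E_i$ are covering spaces). By Lemma \ref{lem: factor}, there is some $k(i)$ with $\ker(\sigma_{k(i)*})\le H_i$.
4. Fix $j$. For $i\ge m_j$ and any $x\in L_j$, write $x=h_ik_i$ with $h_i\in H_i$ and $k_i\in\ker(\sigma_{j*})$. We want a single decomposition $x=hk$ with $h\in\bigcap_i H_i$. This is itself a nested-coset problem: the cosets $H_i\,x\cap\ker(\sigma_{j*})$-type conditions, equivalently the descending cosets $(H_i\cap L_j)x'$.

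Step 4 is where completeness must be bootstrapped, and it is the main obstacle. It is cleaner to work directly with the given cosets. The sequence $H_ig_i\cdot\ker(\sigma_{j*})$ stabilizes for $i\ge m_j$ to the single coset $L_jg_{m_j}$. Hence the cosets $L_jg_{m_j}$, $j=1,2,\dots$, form a descending sequence of cosets of $L_j=H_{m_j}\ker(\sigma_{j*})$. I would pass to the subsequence $i=m_j$, taking $m_j$ increasing. Then the groups $H_{m_j}\ker(\sigma_{j*})$ have the shape required by the proof of Lemma \ref{complete rel H_n}, except that $H$ now varies with $j$.

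I would re-run that proof, which uses only two facts: that $C$ is contained in each group, and that the increments $k_j$ lie in $\ker(\sigma_{j*})$, so Lemma \ref{higman solutions} with $n_i=1$ produces a loop $\beta_1*\alpha$ lying in every $H_{m_j}\ker(\sigma_{j*})g_{m_j}$. The key check is that the inductive reduction $g_{j+1}g_j^{-1}=a_jk_j$ with $a_j\in H_{m_j}$ still lets the $a_j$ be absorbed at later stages.

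To finish, we need membership in $H_ig_i$ itself, not just in $H_i\ker(\sigma_{j*})g_i$. Here I would use step 3: choosing $j\ge k(i)$ gives $\ker(\sigma_{j*})\le H_i$, so $H_i\ker(\sigma_{j*})g_i=H_ig_i$, which yields completeness.
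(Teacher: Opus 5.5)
Your ($\Rightarrow$) direction is fine. Applying Lemma \ref{lem: not path-connected} directly to the finite complex is, if anything, more direct than the paper's route through Corollary \ref{cor: not path-connected}. The outer architecture of ($\Leftarrow$) is also sound: pass to an increasing subsequence $m_j$, find one element in every $L_jg_{m_j}$, then use Lemma \ref{lem: factor} to land in each $H_ig_i$.

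\textbf{The gap.} It is precisely the step you call ``the key check''. This is the only place the hypothesis does any work, and as you formulate it, it fails. Suppose you only know $g_{m_{j+1}}r_j^{-1}=a_jk_j$ with $a_j\in H_{m_j}$ and $k_j\in\ker(\sigma_{j*})$, where $r_j$ is the current representative of $L_jg_{m_j}$. Then there is no reason for $a_j$ to lie in $L_{j+1}=H_{m_{j+1}}\ker(\sigma_{(j+1)*})$. So you cannot replace $L_{j+1}g_{m_{j+1}}$ by $L_{j+1}k_jr_j$.

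The two facts you say the re-run ``uses only'' cannot suffice. Take the dyadic solenoid over $S^1$, with $P_j=S^1$ (so $\ker(\sigma_{j*})$ is trivial) and $H_i=2^i\Z$. The groups $H_j\ker(\sigma_{j*})=2^j\Z$ contain $C$, and every increment $k_j$ can be taken trivial. Yet $\Z$ is not complete relative to $(2^j\Z)$.

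Also, the one place you do invoke stabilization is where it isn't needed. You use it to say the cosets $L_jg_{m_j}$ are nested, but $L_{j+1}\subseteq L_j$ and $g_{m_{j+1}}\in H_{m_j}g_{m_j}$ already give nestedness.

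\textbf{The fix.} Use stabilization inside the decomposition itself. Since $L_j=H_i\ker(\sigma_{j*})$ for every $i\ge m_j$, in particular $L_j=H_{m_{j+1}}\ker(\sigma_{j*})$. So you may choose $a_j\in H_{m_{j+1}}\subseteq L_{j+1}$. Then
$L_{j+1}g_{m_{j+1}}=L_{j+1}a_jk_jr_j=L_{j+1}(k_jk_{j-1}\cdots k_1g_{m_1})$.
From there the argument of Lemma \ref{complete rel H_n} goes through: absorb the tail $[\beta_J]\in\ker(\sigma_{j*})\subseteq L_j$, and reorder modulo $C\subseteq L_j$. Your final step via Lemma \ref{lem: factor} then also works.

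This is exactly how the paper proceeds, except that it works with the $H_i$-cosets directly. From $H_i\ker(\sigma_{(i-1)*})=H_{i-1}\ker(\sigma_{(i-1)*})$ it picks $a_i\in H_i$ and obtains $H_ig_i=H_i(k_{i-1}^{-1}\cdots k_1^{-1}g_1)$ exactly. It then kills the tail using Lemma \ref{lem: factor}, since $\ker(\sigma_{k*})\le H_i$ for large $k$. Finally, from $H_i[\beta_1*\alpha]\supseteq H_jg_j$ it concludes $H_i[\beta_1*\alpha]=H_ig_i$.
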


\begin{proof}
    Corollary \ref{cor: not path-connected} proves that if $E$ is path-connected, then $\rho_{i*}\bigl(\pi_1(E_i,e_i)\bigr)\cdot \ker(f_*)$ is eventually constant for every continuous map $f:X \to P$ to a polyhedral complex.

    Suppose that for every map $f:X \to P$ to a polyhedral complex the sequence of subgroups $\rho_{i*}\bigl(\pi_1(E_i,e_i)\bigr)\cdot \ker{(f_*)}$ is eventually constant.  Let $\{P_i, \sigma_{i,j}\}$ be a polyhedral expansion of $X$.

    Let $H_i=  \rho_{i*}\bigl(\pi_1(E_i,e_i)\bigr)$.  By hypothesis, for each $k$, there exists an $N(k)$ such that $H_i\cdot\ker(\sigma_k) = H_j\cdot\ker(\sigma_k)$ for all $i,j\geq N(k)$.  We may assume that $k<N(k)< N(k+1)$.  This defines a strictly increasing function $N:\mathbb N\to \mathbb N$.

    To show that $E$ is path-connected, we need to show that for every nested sequence of cosets of $H_i$, their intersection is nonempty.  Let $H_ig_i$ be a nested sequence of cosets, which implies that $g_{j}g_i^{-1} \in H_i$ for all $j\geq i$.  We will consider the subsequence given by $\bigl(N^i(1)\bigr)$. Since this subsequence is cofinal, $\bigcap H_{N^i(1)}g_{N^i(1)} = \bigcap H_{i}g_{i}$ and we need only show that the cofinal subsequence of cosets has a non-empty intersection.  After passing to this subsequence, we have that $H_i\cdot\ker(\sigma_k) = H_j\cdot\ker(\sigma_k)$ for all $i,j\geq k$.

    Since $H_i$ is a covering subgroup, Lemma \ref{lem: factor} shows that there exists an $m_i$ such that $\ker(\sigma_{m_i})\subset H_i$.

    Since $H_1\cdot\ker(\sigma_1) = H_2\cdot\ker(\sigma_1)$, we have that $H_1\cdot\ker(\sigma_1)g_1 = H_2\cdot\ker(\sigma_1)g_2$.  Thus there exists $k_1\in \ker(\sigma_1)$ and $a_2\in H_2$ such that $g_1 = a_2k_1g_2$.  Then $g_2 = k_1^{-1}a_2^{-1}g_1$, which implies that $H_2g_2 = H_2(k_1^{-1}a_2^{-1}g_1) = H_2(k_1^{-1}g_1)$ (since $H_2$ contains the commutator subgroup and $a_2^{-1}\in H_2$).  By induction suppose that, for $j< i$, we have defined $k_j\in\ker(\sigma_j)$ such that $H_jg_j = H_j(k_{j-1}^{-1}k_{j-2}^{-1}\cdots k_1^{-1}g_1)$.

    Then $H_i\cdot\ker(\sigma_{i-1})(g_i) = H_{i-1}\cdot\ker(\sigma_{i-1})g_{i-1}= H_{i-1}\cdot\ker(\sigma_{i-1})(k_{i-2}^{-1}k_{i-3}^{-1}\cdots k_1^{-1}g_1)$.  Thus there exists $k_{i-1}\in \ker(\sigma_{i-1})$ and $a_i\in H_i$ such that $a_ik_{i-1}g_i = k_{i-2}^{-1}k_{i-3}^{-1}\cdots k_1^{-1}g_1  $, which gives $g_i = k_{i-1}^{-1}a_i^{-1}k_{i-2}^{-1}k_{i-3}^{-1}\cdots k_1^{-1}g_1$.  Again since $H_i$ contains the commutator subgroup and $a_i^{-1}\in H_i$,  $H_ig_i = H_i(k_{i-1}^{-1}a_i^{-1}k_{i-2}^{-1}k_{i-3}^{-1}\cdots k_1^{-1}g_1) = H_i(k_{i-1}^{-1}k_{i-2}^{-1}k_{i-3}^{-1}\cdots k_1^{-1}g_1)$.

    We can now proceed as in the proof of Lemma \ref{complete rel H_n}.  By Lemma \ref{lem: factor} each element $k_i^{-1}$ can be represented by an element of a Spanier subgroup of a cover with mesh at most $l_i$ where $l_i$ converges to 0.   Hence, Lemma \ref{lem: right basepoints}, allows us to find loops ${}_ib_\bfs$ based at $\gamma(\bfs)$, for $|\bfs|\leq i$, with diameter at most $2l_i$ such that $k_i^{-1} = [h_i] $ where $h_i =_C \stackrel[{|\bfs|\leq i }]{}{\ast} (\iota_\bfs*{}_ib_\bfs*\overline\iota_\bfs)$.

    By Lemma \ref{higman solutions}, there exist loops $\beta_i$ such that \[\beta_i =_C\left(\stackrel[{|\bfs|\leq i}]{}{\ast}(\iota_{\bfs}*\bfsi*\overline{\iota_{\bfs}}) \right)*(\beta_{i+1})=_C h_i*\beta_{i+1},\]

    which implies that $\beta_1 =_C h_1*h_2*\cdots *h_i*\beta_{i+1}$ for all $i$.

    Let $\alpha$ be a representative of $g_1$.  Fix an $i\in\mathbb N$.  Then there exists a $j\geq i$ such that $[\beta_j]\in \ker(\sigma_{m_i*})\subset H_i$.  Thus $\beta_1*\alpha =_C h_1*\cdots h_{j-1}*\beta_j*\alpha$.  Since the commutator subgroup in contained in $H_i$, we have that $ H_i[\beta_1*\alpha] = H_i [\beta_j*h_{j-1}*\cdots * h_1*\alpha] = H_i [h_{i-1}*\cdots *h_1*\alpha]= H_i(k_{j-1}^{-1}\cdots k_1^{-1}g_1)\supset H_j(k_{j-1}^{-1}\cdots k_1^{-1}g_1)= H_jg_j$.  Since both, $H_i[\beta_1*\alpha]$ and $H_ig_i$, contain $H_jg_j$, they are the same coset and $[\beta_1*\alpha]\in H_ig_i$.  Thus $[\beta_1*\alpha]\in H_{i}g_{i}$ for all $i$, as desired.

\end{proof}

\begin{cor}
  Let $X$ be a Peano continuum and $E = \varprojlim\{E_i, \nu_{i,j}\}$ be a pro-covering of $X$ such that $\rho_{i*}\bigl(\pi_1(E_i,e_i)\bigr)$ contains the commutator subgroup of $\pi_1(X,x_0)$.  Then the following are equivalent.

  \begin{enumerate}
      \item $E$ is path-connected.
      \item For every open subgroup $U$ of $\pi_1(X,x_0)$, the sequence $\rho_{i*}\bigl(\pi_1(E_i,e_i)\bigr)\cdot U$ is eventually constant.
      \item For every $n$, the sequence $\rho_{i*}\bigl(\pi_1(E_i,e_i)\bigr)\cdot U_n$ is eventually constant where $\{U_n\}$ is a local basis of the identity element in $\pi_1(X,x_0)$.
  \end{enumerate}

\end{cor}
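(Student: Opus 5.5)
The plan is to prove the cycle of implications $(1)\Rightarrow(2)\Rightarrow(3)\Rightarrow(1)$, writing $H_i=\rho_{i*}\bigl(\pi_1(E_i,e_i)\bigr)$ throughout.

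For $(1)\Rightarrow(2)$, I would argue by contrapositive. If some open subgroup $U$ makes $H_i\cdot U$ fail to be eventually constant, then Corollary \ref{cor: not path-connected} applies directly and shows that $E$ is not path-connected.

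For $(2)\Rightarrow(3)$, I first need each $U_n$ to be an open subgroup. Here I would fix a specific local basis $U_n=\ker(\sigma_{n*})$ for a polyhedral expansion $\{P_i,\sigma_{i,j}\}$. These sets are subgroups, and they are open by Lemma \ref{lem:shape is shape}, since $\ker(\sigma_{n*})=\sigma_{n*}^{-1}(\{1\})$. By that same lemma together with the nesting of the kernels, they form a neighborhood basis of the identity. If (3) is meant for an arbitrary local basis $\{U_n\}$ of open sets, I would reduce to this case as follows. Any open neighborhood $V$ of the identity contains some $\ker(\sigma_{k*})$. Each $\ker(\sigma_{k*})$ in turn contains some $U_n$, and $\langle U_n\rangle$ is an open subgroup. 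Constancy of $H_i\cdot\langle U_n\rangle$ then transfers between the two bases, provided the $U_n$ are taken to be subgroups. With that convention, (3) is an immediate special case of (2).

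The main implication is $(3)\Rightarrow(1)$, and I would deduce it from the proof of Theorem \ref{thm: characterization in terms of maps}. That proof only uses the eventual constancy of $H_i\cdot\ker(\sigma_{k*})$ for each $k$, where $\sigma_k$ are the projections of a fixed polyhedral expansion. From this it builds the function $N$, passes to a cofinal subsequence, and then constructs $\beta_1*\alpha$ via Lemma \ref{higman solutions}.

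So it suffices to check that (3) gives constancy of $H_i\cdot\ker(\sigma_{k*})$ for every $k$. Given $k$, choose $n$ with $U_n\subseteq\ker(\sigma_{k*})$. Then
\[H_i\cdot\ker(\sigma_{k*})=(H_i\cdot U_n)\cdot\ker(\sigma_{k*}).\]
The right-hand side is eventually constant because $H_i\cdot U_n$ is. The products here are subgroups since $H_i$ is normal: it contains the commutator subgroup.

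The only real care needed is in this step. The argument in Theorem \ref{thm: characterization in terms of maps} must be checked to use nothing beyond the kernels of the projection maps. Once that is confirmed, the remainder is verbatim from that proof, which in turn relies on Lemma \ref{lem: factor} and Lemma \ref{lem: right basepoints}.
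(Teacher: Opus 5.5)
Your proof is correct and is essentially the derivation the paper intends; the corollary is stated there without proof. $(1)\Rightarrow(2)$ is the contrapositive of Corollary~\ref{cor: not path-connected}, and $(3)\Rightarrow(1)$ comes from Theorem~\ref{thm: characterization in terms of maps} via $H_i\cdot\ker(f_*)=(H_i\cdot U_n)\cdot\ker(f_*)$ for $U_n\subseteq\ker(f_*)$. You can cite that theorem's statement rather than re-inspecting its proof, since $\ker(f_*)$ is open for every map $f\colon X\to P$ to a polyhedron, by the definition of the shape topology.

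Your convention that the $U_n$ be subgroups is genuinely necessary for $(2)\Rightarrow(3)$, not merely convenient. Take the Hawaiian earring with $H_i=C\cdot\ker(\sigma_{i*})$, which is path-connected by Lemma~\ref{complete rel H_n}. The open sets $U_n=\ker(\sigma_{(n+1)*})\cup\bigcup_{m\ge n+2}g_{n,m}\ker(\sigma_{m*})$, with $g_{n,m}$ going once around the $(n+1)$-st and $m$-th circles, form a local basis at the identity for which $H_i\cdot U_n$ never stabilizes. So the $\langle U_n\rangle$ detour in your second paragraph cannot substitute for the subgroup assumption.
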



\bibliographystyle{plain}
\bibliography{bib}

\end{document}